\PassOptionsToPackage{unicode}{hyperref}
\PassOptionsToPackage{hyphens}{url}
\PassOptionsToPackage{dvipsnames,svgnames,x11names}{xcolor}
\documentclass[
  11pt]{article}

\usepackage{amsmath,amssymb}
\usepackage{amsthm}
\usepackage{amsmath,amssymb,amsfonts,amsthm,subfloat,algorithm,url,float,epsf,psfrag,bm}
\usepackage{mathtools}
\usepackage[pdftex]{color,graphicx}
\usepackage[colorlinks,citecolor=blue,urlcolor=blue]{hyperref}
\usepackage[noabbrev,capitalize]{cleveref}
\usepackage{algorithm, algpseudocode, verbatim, float}
\usepackage{marvosym}
\usepackage{titlesec}
\usepackage{xr}
\usepackage[pagewise]{lineno}
\usepackage{subcaption}
\usepackage[utf8]{inputenc}
\usepackage{mathrsfs}
\usepackage{amsfonts}
\usepackage{enumerate}
\usepackage{latexsym}
\usepackage{multirow}

\allowdisplaybreaks

\usepackage{yhmath}
\usepackage{iftex}
\ifPDFTeX
  \usepackage[T1]{fontenc}
  \usepackage[utf8]{inputenc}
  \usepackage{textcomp} 
\else 
  \usepackage{unicode-math}
  \defaultfontfeatures{Scale=MatchLowercase}
  \defaultfontfeatures[\rmfamily]{Ligatures=TeX,Scale=1}
\fi
\usepackage{lmodern}
\ifPDFTeX\else  
\fi
\IfFileExists{upquote.sty}{\usepackage{upquote}}{}
\IfFileExists{microtype.sty}{
  \usepackage[]{microtype}
  \UseMicrotypeSet[protrusion]{basicmath} 
}{}
\makeatletter
\@ifundefined{KOMAClassName}{
  \IfFileExists{parskip.sty}{%
    \usepackage{parskip}
  }{
    \setlength{\parindent}{0pt}
    \setlength{\parskip}{6pt plus 2pt minus 1pt}}
}{
  \KOMAoptions{parskip=half}}
\makeatother
\usepackage{xcolor}
\makeatletter
\ifx\paragraph\undefined\else
  \let\oldparagraph\paragraph
  \renewcommand{\paragraph}{
    \@ifstar
      \xxxParagraphStar
      \xxxParagraphNoStar
  }
  \newcommand{\xxxParagraphStar}[1]{\oldparagraph*{#1}\mbox{}}
  \newcommand{\xxxParagraphNoStar}[1]{\oldparagraph{#1}\mbox{}}
\fi
\ifx\subparagraph\undefined\else
  \let\oldsubparagraph\subparagraph
  \renewcommand{\subparagraph}{
    \@ifstar
      \xxxSubParagraphStar
      \xxxSubParagraphNoStar
  }
  \newcommand{\xxxSubParagraphStar}[1]{\oldsubparagraph*{#1}\mbox{}}
  \newcommand{\xxxSubParagraphNoStar}[1]{\oldsubparagraph{#1}\mbox{}}
\fi
\makeatother

\usepackage{longtable,booktabs,array}
\usepackage{calc} 
\usepackage{etoolbox}
\makeatletter
\patchcmd\longtable{\par}{\if@noskipsec\mbox{}\fi\par}{}{}
\makeatother
\IfFileExists{footnotehyper.sty}{\usepackage{footnotehyper}}{\usepackage{footnote}}
\makesavenoteenv{longtable}
\usepackage{graphicx}
\makeatletter
\def\maxwidth{\ifdim\Gin@nat@width>\linewidth\linewidth\else\Gin@nat@width\fi}
\def\maxheight{\ifdim\Gin@nat@height>\textheight\textheight\else\Gin@nat@height\fi}
\makeatother
\setkeys{Gin}{width=\maxwidth,height=\maxheight,keepaspectratio}
\makeatletter
\def\fps@figure{htbp}
\makeatother

\makeatletter
\@ifpackageloaded{caption}{}{\usepackage{caption}}
\AtBeginDocument{%
\ifdefined\contentsname
  \renewcommand*\contentsname{Table of contents}
\else
  \newcommand\contentsname{Table of contents}
\fi
\ifdefined\listfigurename
  \renewcommand*\listfigurename{List of Figures}
\else
  \newcommand\listfigurename{List of Figures}
\fi
\ifdefined\listtablename
  \renewcommand*\listtablename{List of Tables}
\else
  \newcommand\listtablename{List of Tables}
\fi
\ifdefined\figurename
  \renewcommand*\figurename{Figure}
\else
  \newcommand\figurename{Figure}
\fi
\ifdefined\tablename
  \renewcommand*\tablename{Table}
\else
  \newcommand\tablename{Table}
\fi
}
\@ifpackageloaded{float}{}{\usepackage{float}}
\floatstyle{ruled}
\@ifundefined{c@chapter}{\newfloat{codelisting}{h}{lop}}{\newfloat{codelisting}{h}{lop}[chapter]}
\floatname{codelisting}{Listing}

\makeatother
\makeatletter
\@ifpackageloaded{caption}{}{\usepackage{caption}}
\@ifpackageloaded{subcaption}{}{\usepackage{subcaption}}
\makeatother

\ifLuaTeX
  \usepackage{selnolig}  
\fi
\usepackage[numbers]{natbib}
\usepackage{bookmark}

\IfFileExists{xurl.sty}{\usepackage{xurl}}{} 
\hypersetup{
  pdftitle={Title},
  pdfauthor={Author 1; Author 2},
  pdfkeywords={3 to 6 keywords, that do not appear in the title},
  colorlinks=true,
  linkcolor={blue},
  filecolor={Maroon},
  citecolor={Blue},
  urlcolor={Blue},
  pdfcreator={LaTeX via pandoc}}

\newtheoremstyle{customgapstyle}
{8pt}      
{8pt}      
{\itshape}  
{}          
{\bfseries} 
{.}         
{.5em}      
{}          

\newcommand{\anon}{1}

\usepackage{bbm}
\theoremstyle{customgapstyle}
\newtheorem{theorem}{Theorem}

\newtheorem{lemma}{Lemma}

\newtheorem{remark}{Remark}

\numberwithin{equation}{section}
\numberwithin{example}{section}
\numberwithin{theorem}{section}
\numberwithin{lemma}{section}
\numberwithin{corollary}{section}
\numberwithin{prop}{section}
\numberwithin{definition}{section}
\numberwithin{remark}{section}

\def\argmin{\mathop{\rm argmin}}
\def\argmax{\mathop{\rm argmax}}
\def\1v{\mathbf 1}
\def\Var{\mbox{\textup{Var}}}

\def\E{\mathbb{E}}
\def\F{\mathbb{F}}

\def\Hellinger{\mathfrak{H}}
\def\P{\mathbb{P}}

\newcommand{\Ac}{\mathcal{A}}

\newcommand{\Fc}{\mathcal{F}}
\newcommand{\Gc}{\mathcal{G}}

\newcommand{\Ic}{\mathcal{I}}

\newcommand{\Pc}{\mathcal{P}}

\newcommand{\KS}{d_{\textup{KS}}}
\newcommand{\aprior}{H}
\newcommand{\trueprior}{{\aprior^*}}
\newcommand{\npmle}{{\widehat{\aprior}_n}}

\newcommand{\alphastar}{\alpha_*}
\newcommand*\diff{\mathop{}\!\mathrm{d}}
\newcommand{\abs}[1]{\left\vert#1\right\vert}
\newcommand{\Real}{\mathbb R}
\allowdisplaybreaks

\begin{document}

\def\spacingset#1{\renewcommand{\baselinestretch}%
{#1}\small\normalsize} \spacingset{1}


\if1\anon
{
  \title{\bf Poisson Empirical Bayes via Gamma-Smoothed Nonparametric Maximum Likelihood}
  \author{Taehyun Kim\thanks{
    E-mail: tk3036@columbia.edu}\hspace{.2cm}\\
    Department of Statistics, Columbia University}
  \maketitle
} \fi

\if0\anon
{
  \bigskip
  \bigskip
  \bigskip
  \begin{center}
    {\LARGE\bf Poisson Empirical Bayes via Gamma-Smoothed Nonparametric Maximum Likelihood}
\end{center}
  \medskip
} \fi

\bigskip

\begin{abstract}
\noindent Empirical Bayes methods are widely used for large-scale estimation and inference in the Poisson means problem. Existing results establish theoretical properties of the nonparametric maximum likelihood estimator (NPMLE) for optimal posterior mean estimation, but comparatively less is known about uncertainty quantification (i.e., construction of confidence sets). Two main challenges in constructing confidence sets for the latent parameters based on the NPMLE are its discreteness and its slow rate of prior estimation. We resolve these limitations by introducing a smooth NPMLE that models the prior as a Gamma mixture, which is a flexible class capable of approximating a wide range of continuous priors on $(0,\infty)$. This procedure preserves the convex optimization structure of the classical NPMLE. The smooth NPMLE achieves the optimal nearly parametric rate for posterior mean estimation. Moreover, it achieves a polynomial convergence rate for prior and posterior density estimation under a compact support assumption on the mixing distribution. Based on the smooth NPMLE, we construct plug-in empirical Bayes confidence sets that mimic the oracle optimal (in terms of expected length) marginal coverage sets. We show theoretically and empirically that these sets achieve asymptotically exact marginal coverage and are substantially shorter than existing methods.
\end{abstract}


\spacingset{1} 
\section{Introduction}\label{sec:intro}
Consider the following Poisson mixture model:
\begin{equation}\label{eq:poisson model}
    X_i \mid \theta_i \overset{ind}\sim \mathrm{Poi}(\theta_i), \qquad \text{and} \qquad 
\theta_i \overset{iid}\sim G^*,\qquad \mbox{for }\;\; i = 1, \ldots, n,
\end{equation}
where the mixing distribution (or prior) $G^*$ is unknown and belongs to $\Pc(\Real_+)$, the set of all probability measures on $\Real_+ = [0,\infty)$. We observe the data $\{X_i\}_{i=1}^{n}$, and the $\{\theta_i\}_{i=1}^{n}$ are unobserved latent variables. This model has long been a canonical framework for modeling count data; see, e.g., \citep{Carriere1993, Patilea2024, Denuit2001SmoothedNE} for applications in actuarial sciences, and has been extensively studied in the literature \citep{Good1956, Simar1976, Lambert1984, Carriere1993, Patilea2024, Denuit2001SmoothedNE, Efron2010, efron2019, Ignatiadis2022}. Dating back to Herbert Robbins's pioneering work in the 1950s \citep{robbins1956empiricalbayes}, empirical Bayes (EB) methods have been widely studied to estimate or predict large collections of latent effects for \eqref{eq:poisson model} \citep{Simar1976, polyanskiy2021sharpregretboundsempirical, Banerjee2021, Jana2023, Jana2025, Shen2026, kang2026functionestimationempiricalbayes, han2025bestinggoodturingoptimalitynonparametric}. Despite the canonical role of \eqref{eq:poisson model} in statistics and machine learning, the optimality of EB rules in estimating $\{\theta_i \}_{i=1}^{n}$ has only recently been studied; see, e.g., \citet{polyanskiy2021sharpregretboundsempirical} for the Robbins estimator \citep{robbins1956empiricalbayes}, and \citet{Jana2025} for minimum-distance estimators including the nonparametric maximum likelihood estimator (NPMLE). Recent works \citep{teh2025solvingempiricalbayestransformers, cannella2026universalpriorssolvingempirical} study how a transformer pretrained on synthetic data achieves vanishing regret in the Poisson EB mean estimation problem.

However, comparatively little is known about \emph{uncertainty quantification}: how to construct optimal confidence sets for $\{ \theta_i \}_{i=1}^{n}$ under \eqref{eq:poisson model}? The classical confidence intervals of \citet{Garwood1936} are known to achieve frequentist coverage of $\{\theta_i\}_{i=1}^{n}$ under the Poisson mixture model \eqref{eq:poisson model}, but they do not use the prior information $G^*$ and are overly conservative \citep{Porter2026}. To make inference on $\{\theta_i\}_{i=1}^{n}$, one may estimate the prior $G^*$ and plug it into downstream statistical procedures; this approach is known as $g$-modeling \citep{Efron2014}. A classical choice for such a plug-in estimator of $G^*$ is the NPMLE, which maximizes the marginal likelihood of the observations $\{X_i \}_{i=1}^{n}$ over all probability distributions on $\Real_+$. While the NPMLE performs well for estimating $\{\theta_i\}_{i=1}^{n}$ without parametric assumptions on $G^*$ \citep{Jana2025,Shen2026}, it has a critical drawback for {\it inference}: it is always discrete \citep{Simar1976,Lindsay1995}. The discreteness of the NPMLE makes it difficult to recover the density of the true prior $G^*$; see the leftmost plot of Figure~\ref{fig:SNPML-Poisson}. Indeed, the convergence rate of the NPMLE under the Poisson mixture model \eqref{eq:poisson model} is logarithmically slow with respect to the $1$-Wasserstein distance \citep{Miao2024}. The induced plug-in posterior is also discrete, which complicates inference for $\{\theta_i \}_{i=1}^{n}$.

Our goal in this paper is to construct a smooth NPMLE of the prior density (as illustrated in the center plot of Figure~\ref{fig:SNPML-Poisson}). That gives strong theoretical guarantees for posterior mean estimation, prior density estimation and uncertainty quantification, yet remains computationally efficient. Based on the smooth NPMLE, we construct EB marginal coverage sets for $\{\theta_i\}_{i=1}^{n}$ that are optimal in expected length among all sets attaining a target coverage level.

\begin{figure}[t]
    \centering
    \begin{subfigure}[t]{0.32\textwidth}
        \centering
        \includegraphics[width=\linewidth]{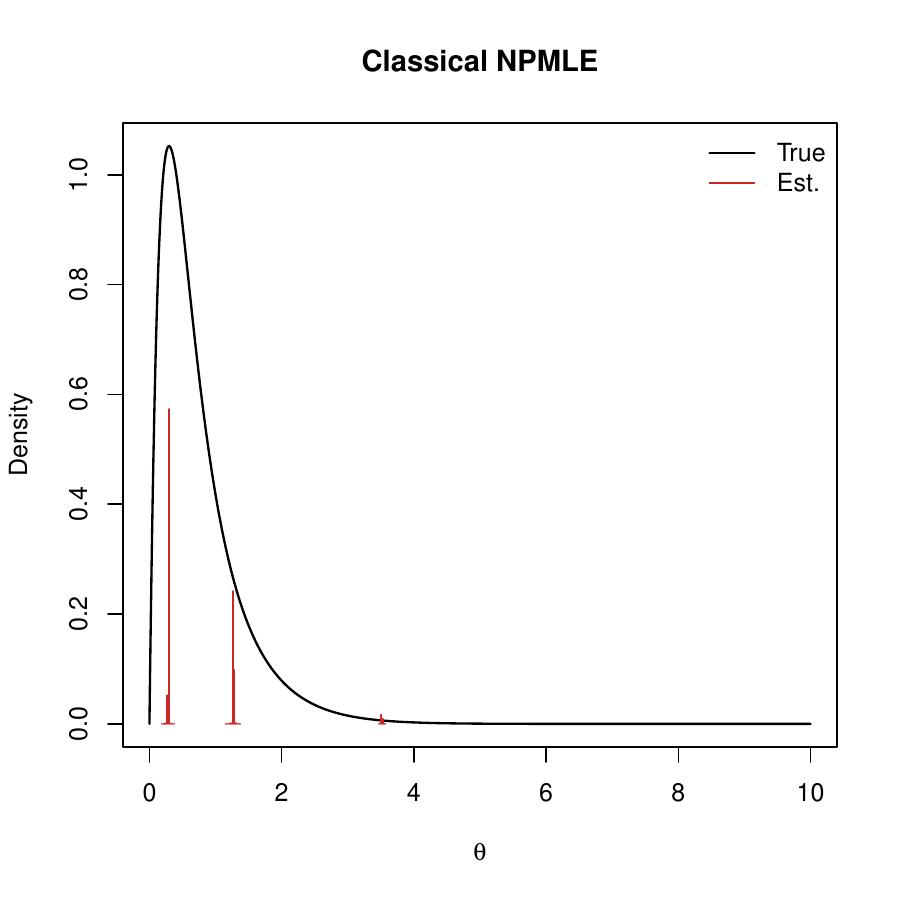}
    \end{subfigure}
    \hfill
    \begin{subfigure}[t]{0.32\textwidth}
        \centering
        \includegraphics[width=\linewidth]{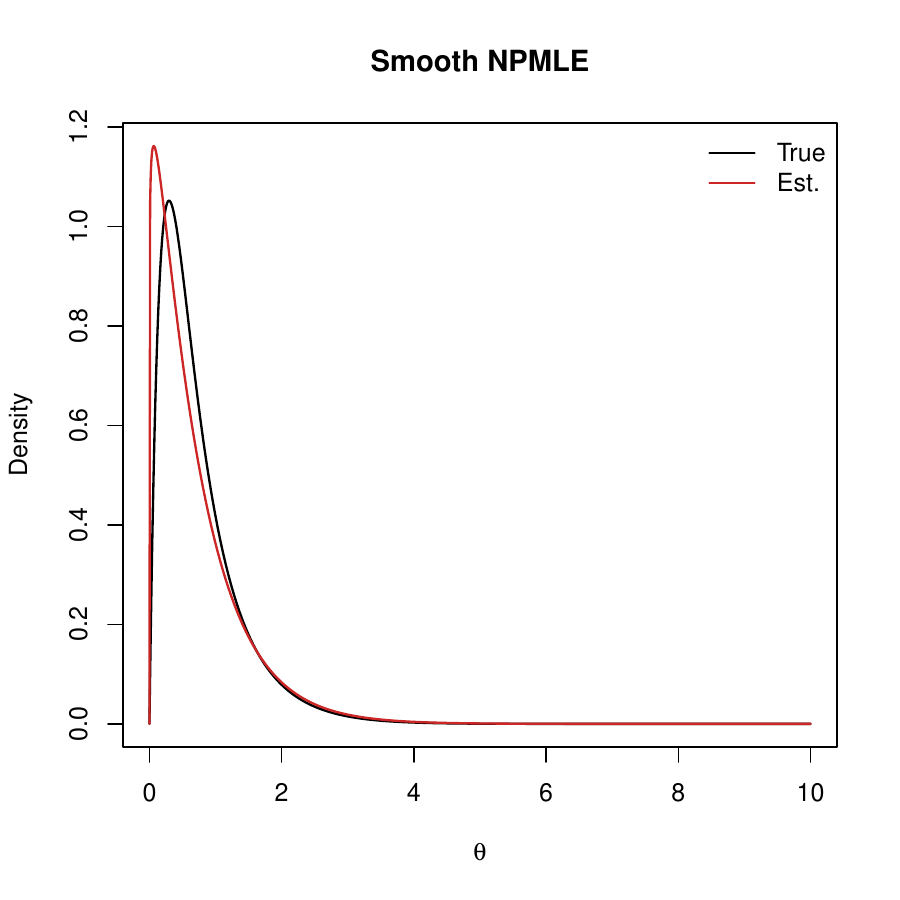}
    \end{subfigure}
    \hfill
    \begin{subfigure}[t]{0.32\textwidth}
        \centering
        \includegraphics[width=\linewidth]{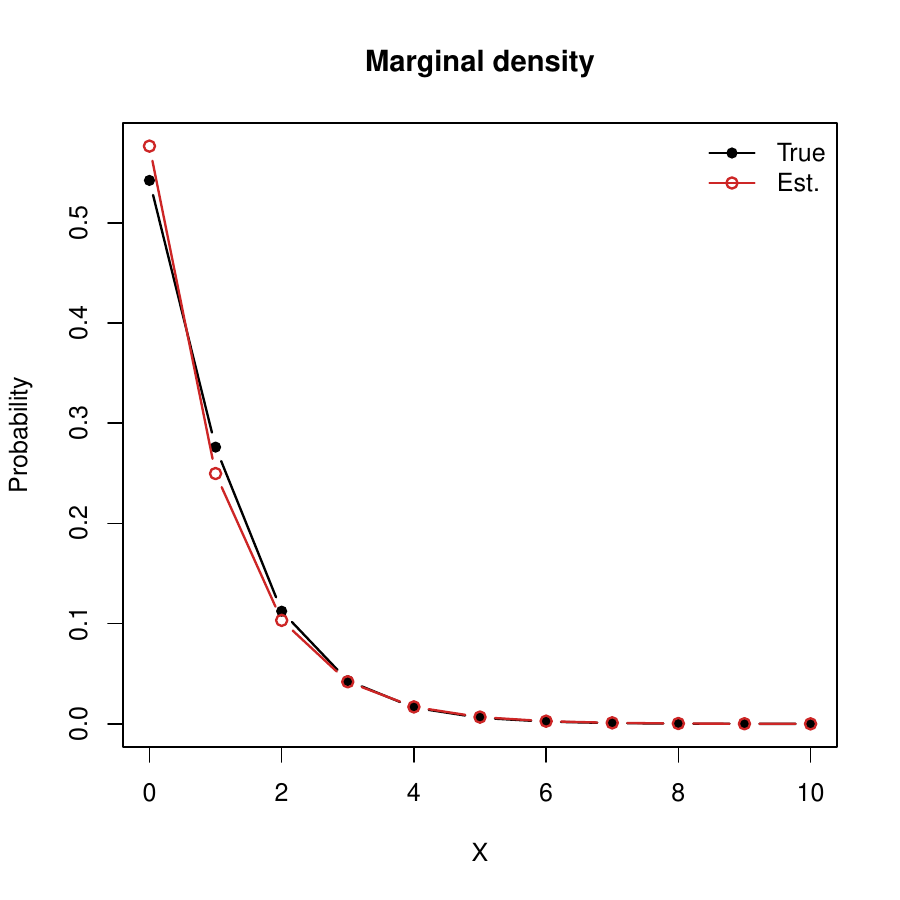}
    \end{subfigure}
    \caption{ We consider $G^* = \tfrac{1}{2}\,\mathrm{Gamma}(2,2) + \tfrac{1}{2}\,\mathrm{Gamma}(2,4)$ in \eqref{eq:poisson model} (equivalently, $\trueprior = \delta_{2}/2 + \delta_{4}/2$ and $\kappa_* = 2$ in \eqref{eq:poisson hierarchical model}). The classical (discrete) NPMLE and the smooth NPMLE \eqref{eq:poisson estimated prior density} are computed using $n = 1000$ observations and shown in the left and center plots along with the true prior density. The true marginal density of the observations and the estimated marginal density based on the smooth NPMLE are shown in the rightmost plot. For the smooth NPMLE, $\kappa_*$ in \eqref{eq:poisson hierarchical model} is also estimated using the neighborhood procedure described in Section~\ref{sec:identifiability-poisson}.}
    \label{fig:SNPML-Poisson}
\end{figure}

\subsection{Main contributions}
\begin{itemize}
    \item We introduce a smooth NPMLE for Poisson mixture models by adding a Gamma smoothing layer to the model (see \eqref{eq:poisson hierarchical model}). This choice is especially natural for the Poisson model: Gamma priors are conjugate to the Poisson likelihood, and the class of Gamma mixture priors is highly flexible and can approximate a broad range of continuous priors on $(0,\infty)$ \citep{Wiper2001, Bochkina2017}. The hierarchical model yields a smooth estimated prior density and a smooth posterior density estimate, while retaining the convex optimization structure of the classical NPMLE through the negative binomial mixture representation (Section~\ref{sec:poisson}).
    
    \item We establish the optimality of the smooth NPMLE for marginal density estimation and posterior mean estimation under mild conditions. We prove a nearly parametric bound for the marginal density estimation error (Theorem~\ref{thm:poisson marginal density estimation}) and, as a consequence, a nearly parametric regret bound for EB posterior mean estimation (Theorem~\ref{thm:poisson mean estimation}). Moreover, we show that the smooth NPMLE converges to the true prior density at a polynomial rate in total variation distance, up to logarithmic factors (Theorem~\ref{thm:poisson convergence of smooth NPMLE}). The plug-in posterior density converges at the same polynomial rate in weighted total variation distance (Theorem~\ref{thm:poisson posterior density convergence}).

    \item We show that oracle optimal marginal coverage sets in terms of expected length are obtained by thresholding the posterior density at a single global level for Poisson mixture models (Theorem~\ref{thm:poisson-opt-marginal}). We construct a smooth plug-in EB analogue based on the smooth NPMLE and prove that it attains the target marginal coverage level asymptotically (Theorem~\ref{thm:poisson-opt-coverage}). Our simulation results show that our procedure yields substantially shorter marginal coverage sets than existing methods while maintaining coverage close to the nominal level (Section~\ref{sec:simulation}).

    \item We study identifiability of the hierarchical Poisson model \eqref{eq:poisson hierarchical model} when the shape parameter in the Gamma mixture is unknown (Section~\ref{sec:identifiability-poisson}). In this case, the model is not identifiable (Lemma~\ref{lem:nestedness of Gamma mixture}). Nevertheless, we show that the smallest shape parameter for which the hierarchical representation \eqref{eq:poisson hierarchical model} holds is identifiable. We construct a consistent estimator of this parameter based on the neighborhood procedure of \citet{Donoho1988} (Theorem~\ref{thm:poisson-kappa-consistency}).

\end{itemize}

\subsection{Related work}

The closest work to ours is \citet{kim2026empiricalbayesestimationinference}, who develop the analogous smooth NPMLE framework for Gaussian location mixtures and study its theoretical properties. Their work and ours are motivated by the `bet on smoothness' principle, which restricts attention to a smooth subclass of nonparametric priors; see, e.g., \citep{efron2016, Stephens2016, Bovy2011, Cordy1997, Shen1999}. Our contribution is to show that the same smooth NPMLE principle can be made to work in the discrete Poisson model where the likelihood, conjugacy, approximation theory, and identifiability structure are fundamentally different. In particular, the Gamma-Poisson conjugacy leads to a negative binomial mixture likelihood. For the posterior mean estimation problem, our arguments build on a line of work \citep{polyanskiy2021sharpregretboundsempirical, Jana2025} for Poisson mixture models, while \citet{kim2026empiricalbayesestimationinference} build on \citep{Jiang2009, Saha2020, Soloff2025} for Gaussian location mixture models. For the prior estimation problem, our proof techniques are inspired by \citep{Miao2024, Han2023}, who show that the NPMLE achieves a polynomial convergence rate under the Gaussian-smoothed optimal transport distance. In our setting, the key approximation step relies on Laguerre polynomial approximation \citep{Szego1975}. In contrast, \citet{kim2026empiricalbayesestimationinference} leverage Fourier-analytic techniques with the fast convergence rate for marginal density estimation.

While estimation of latent effects has been extensively studied in EB \citep{Brown2009, polyanskiy2021sharpregretboundsempirical, Jana2023, Jana2025, kang2026functionestimationempiricalbayes, han2025bestinggoodturingoptimalitynonparametric, Jiang2009, Saha2020, Soloff2025}, uncertainty quantification remains relatively underexplored beyond strongly parametric settings. Early works \citep{Morris1983, Laird1987} propose EB confidence intervals under a Gaussian prior assumption for Gaussian mixture models, but their validity relies on strong parametric assumptions. A recent work by \citet{Armstrong2022} proposes robust EB confidence intervals (REBCIs) that maintain nominal coverage over all prior distributions satisfying certain moment conditions. For the Poisson mixture model \eqref{eq:poisson model}, they calibrate a family of intervals that bridges the equal-tailed posterior credible interval under a conjugate Gamma prior and the Garwood interval \citep{Garwood1936} (see Appendix G.2 therein). The resulting interval is substantially shorter than the Garwood interval. However, their procedure can still be conservative and yield intervals that are longer than necessary. In contrast, our procedure uses Gamma mixture priors to construct EB marginal coverage sets that achieve substantially shorter length while still maintaining the target coverage level.

\section{Hierarchical Poisson model and smooth NPMLE}\label{sec:poisson}
In this section, we introduce a hierarchical Poisson mixture model and a smooth NPMLE to overcome the limitations of the classical NPMLE. Specifically, we assume that the true prior $G^*$ is itself a Gamma mixture and add a Gamma smoothing layer to \eqref{eq:poisson model}: for $i = 1, \ldots, n$,
\begin{equation}\label{eq:poisson hierarchical model}
    X_i \mid \theta_i \overset{ind}\sim \mathrm{Poi}(\theta_i), \quad \qquad 
\theta_i \mid \lambda_i \overset{ind}\sim \mathrm{Gamma}(\kappa_*, \lambda_i), \quad \qquad \lambda_i \overset{iid}\sim \trueprior,
\end{equation}
where, for now, $\kappa_* > 0$ is assumed to be known and $\trueprior \in \Pc((0,\infty))$ is unknown. Throughout, we use the shape-rate parametrization for the Gamma distribution. Note that the Gamma distribution is a conjugate prior for the Poisson likelihood. Moreover, Gamma mixtures have been widely used  to model mixing distributions for count data (see, e.g., \citep{Maritz1966, Denuit2001SmoothedNE}), and are sufficiently flexible to approximate a broad class of continuous priors on $(0,\infty)$ \citep{Wiper2001, Bochkina2017}. As we discuss later in Section~\ref{sec:identifiability-poisson}, the Gamma mixture prior family induced by \eqref{eq:poisson hierarchical model} is nested in the shape parameter. Consequently, fixing $\kappa_*$ in \eqref{eq:poisson hierarchical model} is still flexible, since the model can represent any prior that admits a representation with shape parameter at most $\kappa_*$. We relax the assumption that $\kappa_*$ is known in that section.

Even though we introduce the additional layer in the hierarchical model  \eqref{eq:poisson hierarchical model}, it still preserves the computational advantages of the classical NPMLE under \eqref{eq:poisson model}. Under \eqref{eq:poisson hierarchical model}, we have
\begin{align}\label{eq:negative binomial model}
    X_i \mid \lambda_i \overset{ind}\sim \mathrm{NB}\left(\kappa_*, \frac{\lambda_i}{\lambda_i+1}\right), \quad \qquad \lambda_i \overset{iid}\sim \trueprior
\end{align}
where $\mathrm{NB}(r,p)$ denotes the negative binomial distribution.
We estimate $\trueprior$ via the NPMLE approach, defined as any maximizer
\begin{equation}\label{eq:poisson NPMLE}
    \npmle \in \argmax_{H\in\Pc((0,\infty])} \sum_{i=1}^{n} \log f_{H}(X_i),
\end{equation}
where $f_{H}$ is the marginal density of $X_i$ induced by $H$ under \eqref{eq:poisson hierarchical model}:
\begin{align}\label{eq:poisson marginal density}
    f_{H}(x) \;:=\; \int r_{\kappa_*,\lambda}(x)\, \diff H(\lambda), \qquad r_{\kappa_*,\lambda}(x) :=\frac{\Gamma(x+\kappa_*)}{x! \Gamma(\kappa_*)} \left(\frac{1}{\lambda+1} \right)^x\left(\frac{\lambda}{\lambda+1} \right)^{\kappa_*},
\end{align}
for $x = 0, 1, 2, \ldots$. Here, for $\lambda=\infty$, we define $r_{\kappa_*,\lambda}(x)=\1v(x=0)$. When $\kappa_*$ is known, the negative binomial distribution belongs to a one-dimensional exponential family. Thus, the structural properties of the NPMLE are well understood \citep{Simar1976, Lindsay1995, Balabdaoui2025}: the NPMLE $\npmle$ exists and is unique, has at most $n$ atoms, and is supported on $[\kappa_*/X_{(n)},\infty]$ where $X_{(n)} = \max_i X_i$. Even though this is an infinite-dimensional convex optimization problem, it can be well approximated by a finite-dimensional convex program that is essentially tuning-free \citep{Koenker2014,Koenker2017}. 

A main advantage of using model \eqref{eq:poisson hierarchical model} is that it yields a smooth estimate of the true prior density. Note that, under \eqref{eq:poisson hierarchical model}, the marginal density of $\theta_i$ takes the form
\begin{align}\label{eq:poisson true prior density}
    g_{\trueprior}(\theta) \;:=\; \int_{(0,\infty)} \frac{\lambda^{\kappa_*}}{\Gamma(\kappa_*)}\theta^{\kappa_*-1}e^{-\lambda\theta}\, \diff \trueprior(\lambda), \qquad \theta > 0,
\end{align}
which is a mixture of Gamma distributions. The NPMLE $\npmle$ in \eqref{eq:poisson NPMLE} yields a natural plug-in estimator of the smooth prior density in \eqref{eq:poisson true prior density}:
\begin{equation}\label{eq:poisson estimated prior density}
    g_{\npmle}(\theta) \;:=\; \int_{(0,\infty)} \frac{\lambda^{\kappa_*}}{\Gamma(\kappa_*)}\theta^{\kappa_*-1}e^{-\lambda\theta}\, \diff \npmle(\lambda), \qquad \theta > 0
\end{equation}
which we call the {\it smooth NPMLE}. See the center plot of Figure~\ref{fig:SNPML-Poisson} where the true prior density is approximated quite well by the smooth NPMLE. Hence, by using the NPMLE under the negative binomial model \eqref{eq:negative binomial model}, we obtain a smooth estimate of the prior density for the Poisson model.

\begin{remark}[Interpretation of the smooth NPMLE]
Note that the NPMLE $\npmle$ in \eqref{eq:poisson NPMLE} is computed over $(0,\infty]$. For example, when many observations are zero, $\npmle$ may place positive mass at $\infty$, so the induced prior measure has an atom at $0$. Here, $g_{\npmle}$ denotes the density of the absolutely continuous part of the induced prior measure on $(0,\infty)$. Nevertheless, since the true prior is assumed to be a Gamma mixture density, it is natural to define the smooth NPMLE through the smooth component of the induced prior measure. In practice, one may also compute the NPMLE over $[\kappa_*/X_{(n)},\,U]$ for a sufficiently large $U>0$, in which case $g_{\npmle}$ is a proper smooth estimated prior density.
\end{remark}

\section{Statistical properties of the smooth NPMLE}\label{sec:poisson details}
In this section, we study the theoretical properties of the smooth NPMLE under the hierarchical Poisson model \eqref{eq:poisson hierarchical model}. Throughout, we write $x \lesssim_{p,q} y$ to mean that $x \le C_{p,q}\,y$ for a constant $C_{p,q}>0$ depending only on the parameters $p$ and $q$. We write $\Pc(A)$ for the set of all probability measures on $A \subseteq \Real$. Let $\E_{H}$ and $\P_{H}$ denote expectation and probability under \eqref{eq:poisson hierarchical model} with $\lambda_i \overset{iid}\sim H \in \Pc((0,\infty))$.

\subsection{Posterior mean estimation}\label{sec:mean estimation}
In this subsection, we demonstrate that our smooth NPMLE achieves optimal nearly parametric regret rates for the Poisson mixture model under mild conditions. It is well-known that, for the Poisson mixture model, the posterior mean can be expressed in terms of the marginal density $f_{H}$ in \eqref{eq:poisson marginal density}. That is, under \eqref{eq:poisson hierarchical model} with $\lambda_i \overset{iid}{\sim} H$, the posterior mean of $\theta_i$ given $X_i$ is \citep{robbins1956empiricalbayes}:
\begin{align}\label{eq:poisson posterior mean}
    \hat\theta_H(x):= \E_{H}[\theta_i \mid X_i = x]  = (x+1) \frac{f_{H}(x+1)}{f_{H}(x)}
\end{align}
whenever $f_H(x)>0$. The oracle posterior mean of $\theta_i$ is $\hat\theta_{\trueprior}(X_i)$ obtained by replacing $f_H$ in \eqref{eq:poisson posterior mean} with $f_{\trueprior}$. Its EB counterpart is $\hat\theta_{\npmle}(X_i)$ obtained by replacing $f_H$ with $f_{\npmle}$. Before we establish the nearly parametric regret rates, we prove a nearly parametric bound for marginal density estimation in Hellinger distance $\Hellinger^2(p,q) := \sum_{x=0}^\infty (\sqrt{p(x)}-\sqrt{q(x)})^2/2$. We provide its proof in Appendix~\ref{app:proof of thm:poisson marginal density estimation}.

\begin{theorem}\label{thm:poisson marginal density estimation}
    Suppose that \eqref{eq:poisson hierarchical model} holds where $\kappa_* > 0$ and $\trueprior \in \Pc([L, \infty))$ for some $L > 0$. Let $\npmle$ be any solution of \eqref{eq:poisson NPMLE}. Then,
    \begin{align}\label{eq:poisson density estimation}
        \E_{\trueprior}[\Hellinger^2(f_{\npmle}, f_{\trueprior})] \lesssim_{\kappa_*, L} \frac{\log n}{n}.
    \end{align}
\end{theorem}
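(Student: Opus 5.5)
We follow the standard route for Hellinger rates of the NPMLE in Poisson-type mixtures, as in \citet{Jana2025} and \citet{polyanskiy2021sharpregretboundsempirical}. The basic tool is a likelihood-ratio/union bound over a Hellinger net of the class of marginals $\Fc_{L} := \{f_H : H \in \Pc([L,\infty])\}$.

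\emph{Step 1: localize the NPMLE support.} Since the negative binomial family is a one-dimensional exponential family, the NPMLE is supported on $[\kappa_*/X_{(n)},\infty]$. Under $\trueprior\in\Pc([L,\infty))$, each $X_i$ is stochastically dominated by $\mathrm{NB}(\kappa_*, L/(L+1))$, whose tail is geometric with ratio $1/(L+1)$. Hence $X_{(n)} \le C_{\kappa_*,L}\log n$ with probability $1-n^{-2}$. On this event, $\npmle$ is supported on $[c/\log n,\infty]$ with $c=c_{\kappa_*,L}$. On the complement we use the trivial bound $\Hellinger^2\le 1$, which contributes only $O(n^{-2})$.

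\emph{Step 2: metric entropy.} We bound the sup-norm (and hence Hellinger) covering number of $\{f_H: H\in\Pc([a,\infty])\}$ restricted to $x\le K$, where $a=c/\log n$ and $K\asymp \log n$.

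The approach is moment matching. Write $r_{\kappa_*,\lambda}(x)=\binom{x+\kappa_*-1}{x} u^{x}(1-u)^{\kappa_*}$ with $u=1/(\lambda+1)\in[0,1/(1+a)]$. For $x\le K$, this is $(1-u)^{\kappa_*}$ times a monomial in $u$. Therefore $f_H(x)$ for $x\le K$ is determined by the moments $\int u^{x}(1-u)^{\kappa_*}dH$, $x\le K$. By Carathéodory, any $H$ can be replaced by a discrete $H'$ with at most $K+2$ atoms matching these moments. Discretizing atom locations on a $\eta$-grid in $u$ and weights on a grid then gives $\log N(\eta) \lesssim K\log(K/\eta)$.

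The tail $x>K$ requires separate treatment. The difficulty is that $u$ can be as large as $1/(1+a)\approx 1-a$, so tails of $\npmle$-induced marginals are only geometric with ratio close to $1$. This is the main obstacle. The fix is to use the data-dependent truncation instead: since $\trueprior$ has tail ratio $1/(1+L)$, we only need to control $\Hellinger$ contributions at $x> K$ through $f_{\trueprior}(x)$ together with $\sum_{x>K} f_{\npmle}(x)$. The latter is handled by the generalized Hellinger/likelihood argument of \citet{Zhang2009}/\citet{Jana2025}, which bounds Hellinger using a truncated net plus the empirical mass beyond $K$ (which is zero on the event of Step 1). Concretely, we adapt \citet[Theorem on NPMLE Hellinger rate]{Jana2025} for the Poisson case. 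Their argument uses only (i) $X_{(n)}\lesssim\log n$ whp and (ii) entropy $\log N(\eta)\lesssim \log n\cdot\log(1/\eta)$ for the class restricted to $\{0,\dots,K\}$, with atoms unrestricted. Both hold here, and the moment-matching argument in fact does not even need the lower bound on the atoms.

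\emph{Step 3: conclude.} We apply the standard inequality: for $t \ge$ the solution of $n t^2 \gtrsim \log N(t/n)$, we have $\P(\Hellinger(f_{\npmle},f_{\trueprior})\ge t)\le e^{-cnt^2}$. This follows from the fact that $\npmle$ has likelihood at least that of $\trueprior$, combined with a union bound over the net, using $\E\sqrt{f/f_{\trueprior}}(X)=1-\Hellinger^2$. With $\log N\lesssim \log^2 n$ this naive calculation gives $\log^2 n/n$. To sharpen it to $\log n/n$, we exploit the geometric structure: an $\eta$-net can be built with $\log N(\eta)\lesssim \log(1/\eta)\cdot\max\{\log(1/\eta)/\log\log(1/\eta),\ \log n\}$ only up to constants. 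It is better still to use the Poisson-style bound $\log N(\eta)\lesssim \log(1/\eta)^2/\log\log(1/\eta)$ from polynomial approximation in $u$ on $[0,1/(1+L')]$ for the relevant range, as in \citet{polyanskiy2021sharpregretboundsempirical}. I expect that matching the stated $\log n/n$, rather than $\log^2 n/n$, is the delicate point. If it fails, the fallback is to integrate the tail bound to get $\E\Hellinger^2\lesssim t_n^2+n^{-2}$ with the best available entropy estimate.
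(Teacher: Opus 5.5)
Your proposal has a genuine gap: it does not reach the stated rate. With $\log N(\eta)\lesssim K\log(K/\eta)$, $K\asymp\log n$ and $\eta$ polynomial in $1/n$, the likelihood-ratio/union-bound route gives only $\E[\Hellinger^2(f_{\npmle},f_{\trueprior})]\lesssim(\log n)^2/n$, as you acknowledge. The sharpenings you list cannot close this. Even an entropy bound of order $\log^2(1/\eta)/\log\log(1/\eta)$, plugged into $nt^2\asymp\log N(t)$, yields $(\log n)^2/(n\log\log n)$. That is still larger than $\log n/n$ by a factor $\log n/\log\log n$, so your fallback proves a strictly weaker statement. Two further problems: the control of $f_{\npmle}$ on $\{x>K\}$ is left as a pointer, and the argument the paper adapts from \citet{Jana2025} is not entropy-based. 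It uses no metric entropy at all.

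The missing idea is that $\npmle$ is the KL-projection of the empirical pmf $p_n$ onto $\{f_H\}$. Since $\trueprior$ is feasible, $\mathrm{KL}(p_n\|f_{\npmle})\le\mathrm{KL}(p_n\|f_{\trueprior})$. Combining this with the triangle inequality for $\Hellinger$ and $\Hellinger^2\le\mathrm{KL}\le\chi^2$ gives the deterministic bound
\[
\Hellinger^2(f_{\npmle},f_{\trueprior})\le 2\bigl(\mathrm{KL}(p_n\|f_{\npmle})+\mathrm{KL}(p_n\|f_{\trueprior})\bigr)\le 4\,\mathrm{KL}(p_n\|f_{\trueprior})\le 4\,\chi^2(p_n\|f_{\trueprior}).
\]
Neither the complexity of the model class nor the tail of $f_{\npmle}$ appears in this bound, so your ``main obstacle'' disappears.

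On the event $\{X_{(n)}<K\}$, $p_n$ vanishes on $\{x\ge K\}$. Hence $\E[\chi^2(p_n\|f_{\trueprior})\1v(X_{(n)}<K)]\le\sum_{x<K}\Var(p_n(x))/f_{\trueprior}(x)+\rho_K\le K/n+\rho_K$, where $\rho_K=\P_{\trueprior}(X\ge K)$. On the complement, $\Hellinger^2\le 1$ and a union bound give a contribution of at most $n\rho_K$.

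Your stochastic-domination observation (a geometric tail with ratio $1/(1+L)$, equivalently an MGF bound for the negative binomial mixture) then lets you take $K\asymp_{\kappa_*,L}\log n$ with $\rho_K\le n^{-5}$. This gives $\E[\Hellinger^2(f_{\npmle},f_{\trueprior})]\lesssim\log n/n$ directly. Your Steps 1--3 (support localization of $\npmle$, the Carath\'eodory net, and the union bound over it) are then unnecessary.
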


As in the rightmost plot of Figure~\ref{fig:SNPML-Poisson}, the estimated marginal density approximates the true marginal density well. The next theorem leverages Theorem~\ref{thm:poisson marginal density estimation} to obtain a nearly parametric regret bound, up to logarithmic factors, for posterior mean estimation. Throughout, for probability mass functions $p$ and $q$, we write
$\mathrm{TV}(p,q) := \sum_{x=0}^{\infty} |p(x)-q(x)|/2$. We provide its proof in Appendix~\ref{app:proof of thm:poisson mean estimation}.

\begin{theorem}\label{thm:poisson mean estimation}
    Under the assumptions of Theorem~\ref{thm:poisson marginal density estimation}, let $X \sim f_{\trueprior}$ be independent of $\npmle$. Then,
    \begin{align}\label{eq:poisson regret bound}
        \E_{\trueprior}[(\hat\theta_{\npmle}(X) - \hat\theta_{\trueprior}(X))^2] \lesssim_{\kappa_*, L} \frac{ (\log n)^{3}}{n}.
    \end{align}
\end{theorem}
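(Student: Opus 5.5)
We will follow the regret argument of \citet{polyanskiy2021sharpregretboundsempirical} and \citet{Jana2025}, which turns the Hellinger bound of Theorem~\ref{thm:poisson marginal density estimation} into a bound on the squared difference of posterior means. Write $\hat f = f_{\npmle}$ and $f = f_{\trueprior}$, and abbreviate $\hat\theta = \hat\theta_{\npmle}$, $\theta^\star = \hat\theta_{\trueprior}$.

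\textbf{Step 1 (properties of the two posterior means).} Since $\theta\mid X=x,\lambda \sim \mathrm{Gamma}(x+\kappa_*,\lambda+1)$, every mixture satisfies
\[
\hat\theta_H(x) = \int \frac{x+\kappa_*}{\lambda+1}\,\diff \Pi_H(\lambda\mid x).
\]
As $\trueprior$ is supported on $[L,\infty)$, this gives $\theta^\star(x)\le (x+\kappa_*)/(L+1)$. The NPMLE is supported on $[\kappa_*/X_{(n)},\infty]$, so $\hat\theta(x)\le x+\kappa_*$ in general. This crude bound is acceptable, because only values $x\lesssim \log n$ will matter.

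\textbf{Step 2 (truncation).} The negative binomial has tails of order $(1+L)^{-x}$, so $f(x)\le C e^{-cx}$ with $c=\log(1+L)$. Choose $K = C_0\log n$. The contribution of $\{X>K\}$ to the regret is then bounded by
\[
\E\bigl[(X+\kappa_*)^2\,\1v(X>K)\bigr] \lesssim n^{-2}.
\]

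\textbf{Step 3 (the range $x\le K$).} We use Robbins' formula \eqref{eq:poisson posterior mean} together with the standard decomposition
\[
\hat\theta(x)-\theta^\star(x) = (x+1)\frac{\hat f(x+1)-f(x+1)}{\hat f(x)} + \theta^\star(x)\,\frac{f(x)-\hat f(x)}{\hat f(x)}.
\]
The denominator $\hat f(x)$ could be small, so we apply the $\epsilon$-regularization trick of \citet{Jana2025}. Since $\hat\theta$ and $\theta^\star$ are both $O(x+\kappa_*)$, the quantity $(\hat\theta-\theta^\star)^2$ can be traded for a version with denominators $\max(\hat f,\epsilon)$ and $\max(f,\epsilon)$, at a cost of $O(K^2\epsilon K)$ from points where $f<\epsilon$. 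This leads to
\[
\sum_{x\le K} f(x)\bigl(\hat\theta(x)-\theta^\star(x)\bigr)^2 \lesssim K^2\sum_{x\le K+1}\frac{\bigl(\hat f(x)-f(x)\bigr)^2}{\hat f(x)+f(x)} + K^3\epsilon.
\]
The sum on the right is at most $4\Hellinger^2(\hat f,f)$, via the identity $(a-b)^2 = (\sqrt a-\sqrt b)^2(\sqrt a+\sqrt b)^2$. Taking $\epsilon = 1/n$, then the expectation over $\npmle$ and Theorem~\ref{thm:poisson marginal density estimation}, gives
\[
\E_{\trueprior}\bigl[(\hat\theta(X)-\theta^\star(X))^2\bigr] \lesssim K^2\,\frac{\log n}{n} = \frac{(\log n)^3}{n}.
\]
Independence of $X$ and $\npmle$ is what justifies taking the expectation over $\npmle$ in this way.

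\textbf{Main obstacle.} The hard part is controlling the ratio $f(x)/\hat f(x)$ in the first term of the decomposition without losing more than the factor $K^2$. The clean route exploits the mixture structure directly. The negative binomial kernel satisfies
\[
r_{\kappa_*,\lambda}(x+1) = \frac{x+\kappa_*}{x+1}\,\frac{1}{\lambda+1}\,r_{\kappa_*,\lambda}(x),
\]
so $(x+1)\hat f(x+1)\le (x+\kappa_*)\hat f(x)$. This yields $\hat\theta\le x+\kappa_*$ deterministically. One then bounds the regularized difference through the Jana et al.\ lemma, under which the squared error is at most
\[
\frac{(x+1)^2(\Delta_{x+1})^2 + (x+\kappa_*)^2\Delta_x^2}{\max\bigl(\hat f(x), f(x)\bigr)^2}, \qquad \Delta_x = \hat f(x)-f(x),
\]
multiplied by $f(x)$. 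The bounds in Step 1 handle the case $\hat f(x)<f(x)/2$ on the same footing. Verifying that this case analysis closes with only polylogarithmic losses is the step we expect to need the most care.
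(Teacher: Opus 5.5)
Your argument is correct, and it takes a more direct route than the paper.

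\textbf{The paper's route.} The paper goes through a general-purpose lemma adapted from Lemma 4 of \citet{Jana2025}. It shows $G^*\in\mathrm{SubE}(s)$ and truncates the prior at $h=4s\log n$, so that $\hat\theta_{G_h}\le h$. It handles $\{\theta>h\}$ by Cauchy--Schwarz with a fourth-moment bound. On $\{x\le K-1\}$ it applies the page-35 argument of \citet{Jana2025}, using $\hat\theta_{G_h}\le h$ and $\hat\theta_{\npmle}(x)\le x+\kappa_*$. It controls $G^*((h,\infty))$ and $\rho_K(G^*)$ via Lemma D.3 of \citet{Jana2025}.

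\textbf{Your route.} You use the hierarchical structure to bound the oracle rule itself, $\theta^\star(x)\le (x+\kappa_*)/(L+1)$. This makes the prior truncation, the subexponential argument and the moment bounds unnecessary. The tail $\{X>K\}$ is then handled by the negative binomial tail bound alone, as in Lemma~\ref{lem:NB tail probability}.

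\textbf{Comparison.} The paper's lemma holds for any prior with subexponential tails and any estimator with $\hat\theta\le x+\kappa$. Yours is shorter and self-contained but tied to $\trueprior\in\Pc([L,\infty))$. Both end at $(\log n)^2\,\E_{\trueprior}[\Hellinger^2(f_{\npmle},f_{\trueprior})]$.

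\textbf{Corrections to the write-up.} First, drop the $\epsilon$-regularization in Step 3. It is unnecessary, and as described it ignores points with $\hat f(x)<\epsilon\le f(x)$.

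Second, the step you flag as delicate closes at once. Use denominator $\max(\hat f(x),f(x))$: take the decomposition with coefficient $\theta^\star$ when $\hat f(x)\ge f(x)$, and the symmetric one with coefficient $\hat\theta$ otherwise. Then $\Delta_x^2/\max(\hat f(x),f(x))\le 4(\sqrt{\hat f(x)}-\sqrt{f(x)})^2$. Combining $(x+1)\hat f(x+1)\le (x+\kappa_*)\hat f(x)$ with $(x+1)f(x+1)\le (x+\kappa_*)f(x)$ (your Step 1) gives $\hat f(x+1)+f(x+1)\le \tfrac{2(x+\kappa_*)}{x+1}\max(\hat f(x),f(x))$. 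Hence
\[
\frac{(x+1)^2\Delta_{x+1}^2}{\max(\hat f(x),f(x))}\le 4(x+1)(x+\kappa_*)\bigl(\sqrt{\hat f(x+1)}-\sqrt{f(x+1)}\bigr)^2 ,
\]
so $\sum_{x\le K}f(x)(\hat\theta(x)-\theta^\star(x))^2\lesssim (K+\kappa_*)^2\,\Hellinger^2(\hat f,f)$ holds deterministically.

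Minor points:
\begin{itemize}
\item $\hat\theta\le x+\kappa_*$ follows from $\lambda\ge 0$, not from the support of $\npmle$.
\item $f(x)$ carries an extra factor of order $x^{\kappa_*-1}$, so take $c<\log(1+L)$.
\item On $\{X_{(n)}=0\}$ you need a convention for $\hat\theta$. Any value in $[0,x+\kappa_*]$ works with the max-denominator decomposition.
\end{itemize}
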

\begin{remark}[Interpretation of the regret]
    In Theorem~\ref{thm:poisson mean estimation}, we use an EB regret for an observation $X$ independent of the training sample used to construct $\npmle$. This is a standard formulation in the Poisson EB literature \citep{polyanskiy2021sharpregretboundsempirical, Jana2023, Jana2025}. By exchangeability, this is the average regret of the EB rule that estimates $\theta_i$ using the leave-one-out NPMLE computed from the sample without the $i$th observation.
\end{remark}

Note that Theorems~\ref{thm:poisson marginal density estimation} and \ref{thm:poisson mean estimation} are proved for the case where $\npmle$ in \eqref{eq:poisson NPMLE} is computed over the unconstrained class $\Pc((0,\infty])$. Also,  when $\trueprior \in \Pc([L, \infty))$ for some $L > 0$ in \eqref{eq:poisson hierarchical model}, the true prior $G^*$ belongs to the class of subexponential priors $ \mathrm{SubE}(s) := \{G : G([t, \infty)) \le 2e^{-t/s}, \;\forall t \ge 0 \}$  for some $s = s(\kappa_*, L)> 0$. For this case, Theorems 21 and 2 of \citet{polyanskiy2021sharpregretboundsempirical} show that the rates in \eqref{eq:poisson density estimation} and \eqref{eq:poisson regret bound} are minimax optimal with exact logarithmic factors.

\begin{remark}[Proof strategy and model difference]
    The proofs of Theorems~\ref{thm:poisson marginal density estimation} and \ref{thm:poisson mean estimation} adapt the arguments of Theorems 2 and 3 in \citet{Jana2025}, but the setting is fundamentally different. While \citet{Jana2025} study minimum-distance estimators for the Poisson mixture model \eqref{eq:poisson model}, we work with the hierarchical Poisson model \eqref{eq:poisson hierarchical model} and use the NPMLE under the negative binomial mixture model \eqref{eq:negative binomial model} to construct smooth estimates of the prior and posterior densities.
\end{remark}

\subsection{Prior and posterior density estimation}\label{sec:prior and posterior estimation}
In this subsection, we study the convergence of the smooth NPMLE and the corresponding posterior density. Under the Poisson mixture model \eqref{eq:poisson model}, it has long been known that estimation of the mixing distribution is quite hard; the minimax lower bound under metrics such as the total variation distance or the $1$-Wasserstein distance is logarithmic \citep{Zhang1995, Loh1996, Han2023, Miao2024}. In particular, \citet{Miao2024} show that the NPMLE achieves only a logarithmic convergence rate under the $1$-Wasserstein distance. Our result shows that, when we focus on estimating Gamma mixture prior densities under boundedness assumptions on the rate parameter, the smooth NPMLE converges to the true prior density in total variation distance at a polynomial rate. We provide the proof of Theorem~\ref{thm:poisson convergence of smooth NPMLE} in Appendix~\ref{app:proof of thm:poisson convergence of smooth NPMLE}.

\begin{theorem}\label{thm:poisson convergence of smooth NPMLE}
Suppose that \eqref{eq:poisson hierarchical model} holds where $\kappa_* > 0$ and $\trueprior \in \Pc([L,U])$ for some $0 < L < U < \infty$. Let $\npmle$ be any solution of \eqref{eq:poisson NPMLE} where the maximization is taken over $\Pc([L,U])$. Then there exists $\alpha_* \in (0,1/2)$, depending only on $(L,U)$, such that
\begin{align*}
    \E_{\trueprior}\left[ \mathrm{TV}(g_{\npmle},g_{\trueprior})\right] \lesssim_{\kappa_*, L, U} n^{-\alpha_*} (\log n)^{(1-\kappa_*)_+ + \alpha_*}.
\end{align*}
\end{theorem}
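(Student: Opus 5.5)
The plan is to transfer the nearly parametric marginal rate of Theorem~\ref{thm:poisson marginal density estimation} to the prior density through a polynomial-approximation (moment-comparison) argument. Write $\Delta := g_{\npmle} - g_{\trueprior}$, a signed Gamma mixture with rate measure $\npmle - \trueprior$ supported on $[L,U]$. First, the Hellinger bound of Theorem~\ref{thm:poisson marginal density estimation} applies, since $\Pc([L,U])\subset\Pc([L,\infty))$ and the constrained NPMLE satisfies the same basic inequality with the same entropy bounds. Via $\mathrm{TV}\le\sqrt2\,\Hellinger$ this gives $\E\,\mathrm{TV}(f_{\npmle},f_{\trueprior})\lesssim\sqrt{\log n/n}$. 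The key identity is
\[
f_H(x) = \int_0^\infty e^{-\theta}\frac{\theta^x}{x!}\, g_H(\theta)\,\diff\theta .
\]
Consequently, for any polynomial $P(\theta)=\sum_{x\le K} a_x\theta^x$ we have
\[
\int e^{-\theta}P(\theta)\Delta(\theta)\,\diff\theta=\sum_{x\le K} a_x\, x!\,(f_{\npmle}-f_{\trueprior})(x),
\]
which is bounded by $\max_x |a_x|x!\cdot 2\,\mathrm{TV}(f_{\npmle},f_{\trueprior})$.

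Next I would write $\mathrm{TV}(g_{\npmle},g_{\trueprior})=\tfrac12\int \mathrm{sgn}(\Delta)\Delta$ and approximate the test function $\phi(\theta)=\mathrm{sgn}(\Delta(\theta))e^{\theta}$ by $P$ in the weighted sense, i.e.\ control $\int |e^{-\theta}P(\theta)-\mathrm{sgn}\Delta(\theta)|\,|\Delta(\theta)|\,\diff\theta$. Because $\Delta$ is itself an $L^2$-smooth object, I would instead pass through $L^2$. Since $|\Delta(\theta)|\lesssim \theta^{\kappa_*-1}e^{-L\theta}$, Cauchy--Schwarz gives $\mathrm{TV}\lesssim\|\Delta\|_{L^2(w)}$ with a suitable weight $w$; the tail $\theta>T$ contributes $e^{-LT}$ and the region $\theta<\epsilon$ contributes $\epsilon^{\kappa_*}$. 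Expanding $\Delta(\theta)e^{\theta/2}$-type functions in generalized Laguerre polynomials $L_k^{(\kappa_*-1)}$, which are orthogonal for $\theta^{\kappa_*-1}e^{-\theta}$, the Laguerre coefficients of $\Delta$ are linear combinations of $\int \theta^{j}e^{-\theta}\Delta$, i.e.\ of $j!\,(f_{\npmle}-f_{\trueprior})(j)$ up to the binomial coefficients of $L_k$. The low-degree part, $k\le K$, is therefore bounded by $C^K\cdot\mathrm{TV}(f)$ for some $C=C(\kappa_*)$ (coefficient growth is exponential in $K$).

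For the high-degree part I would use that a Gamma kernel $\theta^{\kappa_*-1}e^{-\lambda\theta}$ with $\lambda\in[L,U]$ has Laguerre coefficients decaying geometrically like $\rho^k$ with $\rho=\rho(L,U)<1$ (by the generating function $\sum_k L_k^{(\alpha)}(\theta)t^k=(1-t)^{-\alpha-1}e^{-\theta t/(1-t)}$, taking $t$ so that $1/(1-t)$ matches $\lambda$; one may need to rescale $\theta$ so that the relevant $|t|$ is uniformly below $1$ on $[L,U]$). This gives a tail of order $\rho^K$. Balancing $C^K n^{-1/2}\sqrt{\log n}$ against $\rho^K$ with $K\asymp \log n$ yields $n^{-\alpha_*}$ where $\alpha_*=\tfrac12\log(1/\rho)/\log(C/\rho)\in(0,1/2)$, depending on $(L,U)$. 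The logarithmic factor $(\log n)^{(1-\kappa_*)_+ +\alpha_*}$ should emerge from converting $L^2$ bounds back to $L^1$: the factor $\theta^{\kappa_*-1}$ near $0$ for $\kappa_*<1$ and the truncation at $T\asymp\log n$ produce powers of $\log n$.

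The main obstacle is this last step: getting a uniform geometric decay $\rho^k$ for all $\lambda\in[L,U]$ with a scaling that makes the moment-to-coefficient map cost only $C^K$, and converting weighted $L^2$ control into TV with the stated log factor. I would first try fixing the Laguerre scale parameter $\beta$ (using $e^{-\beta\theta}$ weights) at $\beta=(L+U)/2$, so that $|t|=|\lambda-\beta|/\lambda\le (U-L)/(U+L)$ uniformly, and then verify that the coefficients of the scaled $L_k^{(\kappa_*-1)}(\beta\theta)$, when paired with $e^{-\theta}\theta^j$, are still expressible through the $f(j)$ (this requires $\beta\ge 1$ after reweighting; otherwise I would absorb $e^{-(\beta-1)\theta}$ via the NB reparametrization).
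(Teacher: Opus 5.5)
Your route is genuinely different from the paper's, and it can be completed. As written, though, the step you flag as the crux has three concrete errors.

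(i) Take the weight $\theta^{\kappa_*-1}e^{-\theta}$. The coefficients $\int h\,L_k^{(\kappa_*-1)}\theta^{\kappa_*-1}e^{-\theta}\,\diff\theta$ are finite combinations of $\int\Delta\,\theta^j e^{-\theta}\,\diff\theta=j!\,(f_{\npmle}-f_{\trueprior})(j)$ only when $h=\Delta\,\theta^{1-\kappa_*}$. For ``$\Delta e^{\theta/2}$-type'' functions the coefficients are $\int\Delta\,\theta^{\kappa_*-1}e^{-\theta/2}L_k^{(\kappa_*-1)}\,\diff\theta$. These are not combinations of the values $(f_{\npmle}-f_{\trueprior})(j)$, so no $C^K\,\mathrm{TV}(f_{\npmle},f_{\trueprior})$ bound follows.

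(ii) Your rescaling estimate is false. With $\beta=(L+U)/2$, $\max_{\lambda\in[L,U]}|\lambda-\beta|/\lambda=(U-L)/(2L)$, attained at $\lambda=L$, not $(U-L)/(U+L)$. This exceeds $1$ once $U\ge 3L$, so the midpoint scale gives no uniform geometric decay.

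(iii) With that weight and $h=\Delta\theta^{1-\kappa_*}$, going back from $L^2$ to $L^1$ costs $\bigl(\int_0^T\theta^{\kappa_*-1}e^{\theta}\,\diff\theta\bigr)^{1/2}\approx e^{T/2}$. So truncating at $T\asymp\log n$ costs a power of $n$, not a power of $\log n$.

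\textbf{The fix needs no rescaling.} Put $h:=\Delta\,\theta^{1-\kappa_*}=\int\frac{\lambda^{\kappa_*}}{\Gamma(\kappa_*)}e^{-\lambda\theta}\,\diff(\npmle-\trueprior)(\lambda)$ and $w:=\theta^{\kappa_*-1}e^{-\theta}$. Then
\begin{align*}
\langle h,L_k^{(\kappa_*-1)}\rangle_w=\sum_{j=0}^k(-1)^j\binom{k+\kappa_*-1}{k-j}(f_{\npmle}-f_{\trueprior})(j),\qquad \frac{\langle h,L_k^{(\kappa_*-1)}\rangle_w}{\|L_k^{(\kappa_*-1)}\|_w^2}=\frac{1}{\Gamma(\kappa_*)}\int t_\lambda^{k+\kappa_*}\,\diff(\npmle-\trueprior)(\lambda).
\end{align*}
The second identity comes from the generating function with $t_\lambda=\lambda/(1+\lambda)\le U/(1+U)=:r$. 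In this normalization the matching is $t/(1-t)=\lambda$.

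Use the first identity for $k\le K$. The binomial coefficients are $\lesssim_{\kappa_*}2^k\,\mathrm{poly}(k)$, so the exponential base is free of $\kappa_*$, which is needed for $\alpha_*$ to depend only on $(L,U)$. Use the second identity for $k>K$. Parseval then gives
\[
\|h\|_{L^2(w)}\lesssim_{\kappa_*,U}\mathrm{poly}(K)\{2^K\|f_{\npmle}-f_{\trueprior}\|_1+r^K\}.
\]
Cauchy--Schwarz on $[0,T]$ plus a Chernoff tail bound (using $\npmle,\trueprior\in\Pc([L,U])$) gives $2\,\mathrm{TV}(g_{\npmle},g_{\trueprior})\lesssim_{\kappa_*}e^{T/2}T^{\kappa_*/2}\|h\|_{L^2(w)}+e^{-LT/2}$.

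Taking expectations and choosing $K,T\asymp\log n$ gives, up to logarithms, the rate $n^{-\gamma}$ with
\[
\gamma=\frac{L}{1+L}\cdot\frac{\log(1/r)}{2\log(2/r)}.
\]
This depends only on $(L,U)$. Any $\alpha_*<\gamma$ then gives the theorem's form, so you do not need to reproduce the exact log power.

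Truncation can be avoided altogether. Use $h_\beta=\Delta\,\theta^{1-\kappa_*}e^{(\beta-1)\theta}$, weight $\theta^{\kappa_*-1}e^{-\beta\theta}$ and polynomials $L_k^{(\kappa_*-1)}(\beta\theta)$ with $\beta\in(2,2L+2)$. The coefficients are still combinations of the $(f_{\npmle}-f_{\trueprior})(j)$, now with factors $\beta^j$. The generating-function ratio is $|\lambda+1-\beta|/(\lambda+1)<1$ uniformly on $[L,U]$, and Cauchy--Schwarz bounds $\int|\Delta|$ by a constant times $\|h_\beta\|$ directly.

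\textbf{Comparison with the paper.} The paper works in the dual direction. It writes TV through test functions $\psi$ and passes to $z=(1+\lambda)^{-1}\in I$. It then approximates $T_\psi$ uniformly on $I$ by $\sum_{x\le m}a_x\phi_x$, with error $C\rho^m$ and $\sum_x|a_x|\lesssim(m+1)^{(1-\kappa_*)_+}B^m$. The tools are Laguerre derivative bounds, DeVore's Jackson-type estimate and Timan's coefficient bound. This controls TV in $L^1$ directly, with no truncation in $\theta$, and produces the stated log power.

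Your corrected route replaces that approximation theory with orthogonality, Parseval and the generating function, so it is more elementary. It also never maps $I$ onto $[-1,1]$, so it avoids the factor $B=\exp\bigl(2(z_0+1)/(b-a)\bigr)$. As a result its exponent stays bounded away from $0$ as $U\downarrow L$, whereas the paper's $\alpha_*$ tends to $0$ there.
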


\begin{remark}[Compact support assumption]\label{rem:compact support assumption}
   In Theorem~\ref{thm:poisson convergence of smooth NPMLE}, the NPMLE $\npmle$ is computed over the same known interval $[L,U]$ on which the true prior $\trueprior$ is supported. This technical assumption is similar in spirit to those in \citep{Miao2024,Han2023} under \eqref{eq:poisson model}, where the true mixing distribution is assumed to be supported on a known compact interval and the NPMLE is computed over the same class. In our context, the lower bound $L$ rules out heavy-tailed Gamma mixture priors on $\theta$ as in Theorems~\ref{thm:poisson marginal density estimation} and~\ref{thm:poisson mean estimation}, while the upper bound $U$ ensures that Gamma mixture priors are not arbitrarily concentrated near $0$. See Remark~\ref{rem:alphastar} in Appendix~\ref{app:proof of thm:poisson convergence of smooth NPMLE} for the explicit dependence of $\alpha_*$ on $(L,U)$.
\end{remark}

In Appendix~\ref{app:illustration-smooth-NPMLE}, we illustrate the polynomial convergence rate of the smooth NPMLE via simulations. We also compare this rate with the polynomial convergence rates obtained under the Gaussian-smoothed optimal transport framework \citep{Han2023,teh2023} and discuss the differences in our proof strategy.

Our convergence result for the smooth NPMLE also implies convergence of the posterior density. Under \eqref{eq:poisson hierarchical model}, the posterior density of $\theta_i$ given $X_i=x$ is
\begin{align}\label{eq:poisson posterior density}
    \pi_{\trueprior}(\theta \mid x) := \frac{p_{\theta}(x)g_{\trueprior}(\theta)}{f_{\trueprior}(x)}, \qquad \theta > 0, \qquad x = 0,1,2,\cdots,
\end{align}
where $p_{\theta}(x) := e^{-\theta}\theta^x/x!$, $g_{\trueprior}$ is defined in \eqref{eq:poisson true prior density}, and $f_{\trueprior}$ is defined via \eqref{eq:poisson marginal density}. Replacing the true prior density $g_{\trueprior}$ by the smooth NPMLE $g_{\npmle}$ yields the estimated posterior density
\begin{align}\label{eq:poisson estimated posterior density}
    \pi_{\npmle}(\theta \mid x) := \frac{p_{\theta}(x)g_{\npmle}(\theta)}{f_{\npmle}(x)}, \qquad \theta > 0,\qquad x = 0,1,2,\cdots,
\end{align}
where $f_{\npmle}$ is the estimated marginal density of $X_i$. We measure the distance between the true and estimated posterior densities using the weighted total variation distance $\mathrm{wTV}(\pi_{\npmle},\pi_{\trueprior}) := \sum_{x=0}^{\infty} \mathrm{TV}\big(\pi_{\npmle}(\cdot \mid x), \pi_{\trueprior}(\cdot \mid x)\big)f_{\trueprior}(x)$. We provide the proof of Theorem~\ref{thm:poisson posterior density convergence} in Appendix~\ref{app:proof of thm:poisson posterior density convergence}.
\begin{theorem}\label{thm:poisson posterior density convergence}
    Under the assumptions of Theorem~\ref{thm:poisson convergence of smooth NPMLE} with the same $\alpha_* \in (0,1/2)$,
    \begin{align*}
        \E_{\trueprior}\left[\mathrm{wTV}(\pi_{\npmle},\pi_{\trueprior})\right] \lesssim_{\kappa_*, L, U} n^{-\alpha_*} (\log n)^{(1-\kappa_*)_+ + \alpha_*}.
    \end{align*}
\end{theorem}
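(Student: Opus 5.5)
The plan is to reduce the posterior statement to the prior statement of Theorem~\ref{thm:poisson convergence of smooth NPMLE} together with the marginal bound of Theorem~\ref{thm:poisson marginal density estimation}. We do this through a deterministic inequality bounding $\mathrm{wTV}$ by a constant multiple of $\mathrm{TV}(g_{\npmle},g_{\trueprior})$. Fix $x$. Writing $\pi_H(\theta\mid x)=p_\theta(x)g_H(\theta)/f_H(x)$ and adding and subtracting $p_\theta(x)g_{\npmle}(\theta)/f_{\trueprior}(x)$, the triangle inequality gives
\begin{align*}
2\,\mathrm{TV}\big(\pi_{\npmle}(\cdot\mid x),\pi_{\trueprior}(\cdot\mid x)\big)
\le \frac{\int p_\theta(x)\,|g_{\npmle}(\theta)-g_{\trueprior}(\theta)|\,\diff\theta}{f_{\trueprior}(x)}
+ \Big|\frac{1}{f_{\npmle}(x)}-\frac{1}{f_{\trueprior}(x)}\Big|\int p_\theta(x)g_{\npmle}(\theta)\,\diff\theta .
\end{align*}
In the second term, $\int p_\theta(x)g_{\npmle}(\theta)\,\diff\theta=f_{\npmle}(x)$. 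This holds because the NPMLE is taken over $\Pc([L,U])$, so there is no atom at $\infty$ and $g_{\npmle}$ is a genuine density. Hence the second term equals $|f_{\trueprior}(x)-f_{\npmle}(x)|/f_{\trueprior}(x)$.

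Next, multiply by $f_{\trueprior}(x)$ and sum over $x$. The first term becomes $\sum_x\int p_\theta(x)|g_{\npmle}-g_{\trueprior}|\,\diff\theta=\int|g_{\npmle}-g_{\trueprior}|\,\diff\theta=2\,\mathrm{TV}(g_{\npmle},g_{\trueprior})$, using $\sum_x p_\theta(x)=1$. The second becomes $\sum_x|f_{\npmle}(x)-f_{\trueprior}(x)|=2\,\mathrm{TV}(f_{\npmle},f_{\trueprior})$. Since $f_H(x)=\int p_\theta(x)g_H(\theta)\,\diff\theta$, the data-processing inequality gives $\mathrm{TV}(f_{\npmle},f_{\trueprior})\le \mathrm{TV}(g_{\npmle},g_{\trueprior})$. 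Therefore
\[
\mathrm{wTV}(\pi_{\npmle},\pi_{\trueprior})\le 2\,\mathrm{TV}(g_{\npmle},g_{\trueprior})
\]
holds pointwise. Taking expectations and invoking Theorem~\ref{thm:poisson convergence of smooth NPMLE} yields the claimed rate with the same $\alpha_*$. Alternatively, one could bound the marginal term through $\mathrm{TV}\le\sqrt2\,\Hellinger$ and Theorem~\ref{thm:poisson marginal density estimation}, which gives the faster rate $\sqrt{\log n/n}$; the data-processing route is simpler.

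The only delicate points are the following.
\begin{itemize}
\item The posterior must be well defined, i.e.\ $f_{\npmle}(x)>0$ for all $x$. This is automatic because the negative binomial pmf is positive for $\lambda\in[L,U]$.
\item Tonelli's theorem is needed to exchange the sum and the integral; it applies since all integrands are nonnegative.
\item The support must be handled correctly. If the maximization had been over $(0,\infty]$, mass at $\infty$ would break the identity $\int p_\theta g_{\npmle}=f_{\npmle}$. The hypotheses rule this out.
\end{itemize}
I expect no step to be a serious obstacle, since all the real difficulty already lies in Theorem~\ref{thm:poisson convergence of smooth NPMLE}.
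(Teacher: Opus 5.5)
Your proposal is correct and follows the paper's proof. You use the same add-and-subtract of $p_\theta(x)g_{\npmle}(\theta)/f_{\trueprior}(x)$ to get $\mathrm{wTV}(\pi_{\npmle},\pi_{\trueprior})\le \mathrm{TV}(g_{\npmle},g_{\trueprior})+\mathrm{TV}(f_{\npmle},f_{\trueprior})$, which relies on the no-mass-at-$\infty$ identity you flag. The only difference is the marginal term: the paper bounds it via $\mathrm{TV}\le\sqrt{2}\,\Hellinger$, Theorem~\ref{thm:poisson marginal density estimation} and $\alpha_*<1/2$ (your stated alternative), whereas your data-processing bound $\mathrm{TV}(f_{\npmle},f_{\trueprior})\le\mathrm{TV}(g_{\npmle},g_{\trueprior})$ is equally valid and slightly simpler, costing only a factor $2$.
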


\section{Optimal marginal coverage sets}\label{sec:OPT}
\subsection{Oracle optimal marginal coverage sets}
In this section, we discuss how to construct confidence sets for $\{\theta_i \}_{i=1}^{n}$ based on the smooth NPMLE. We focus on the notion of {\it marginal coverage} \citep{Morris1983, Armstrong2022, Hoff2025, kim2026empiricalbayesestimationinference}, which is quite natural in EB problems because the latent effects $\{\theta_i \}_{i=1}^{n}$ are assumed to follow a common prior distribution $G^*$. We say a set-valued rule $\Ic:x \mapsto \Ic(x) \subset (0,\infty)$ maintains $(1-\beta)$ marginal coverage if
\begin{align}\label{eq:poisson marginal coverage}
    \P_{\trueprior}(\theta_i \in \Ic(X_i)) \ge 1-\beta,
\end{align}
where the probability is computed under the joint law of $(\theta_i, X_i)$ in \eqref{eq:poisson hierarchical model}. Unlike conditional coverage, which is required for each fixed parameter value $\theta_i$ in the frequentist sense or each fixed observation value $X_i = x$ in the Bayesian sense, marginal coverage only requires the joint coverage probability to be at least $1-\beta$.  Among all measurable set-valued rules maintaining $(1-\beta)$ marginal coverage, we seek one with the smallest expected length:
\begin{align}\label{eq:poisson optimal marginal set problem}
    \min_{\Ic(\cdot)} \E_{\trueprior}[|\Ic(X_i)|] \qquad \mbox{subject to} \qquad \P_{\trueprior}(\theta_i \in \Ic(X_i)) \ge 1-\beta,
\end{align}
where $|A|$ denotes the Lebesgue length of a measurable set $A \subset (0,\infty)$. Theorem~\ref{thm:poisson-opt-marginal} shows that the oracle solution is obtained by thresholding the true posterior density at a global level. We provide its proof in Appendix~\ref{app:proof of thm:poisson-opt-marginal}, where we verify the assumptions of Theorem 3.1 of \citet{kim2026empiricalbayesestimationinference} for our Poisson hierarchical model \eqref{eq:poisson hierarchical model}.

\begin{theorem}\label{thm:poisson-opt-marginal}
    Suppose that \eqref{eq:poisson hierarchical model} holds where $\kappa_* > 0$ and $\trueprior \in \Pc((0,\infty))$. Let $0 < \beta < 1$. Then there exists a unique constant $k^* \ge 0$ such that the set-valued rule
    \begin{align}\label{eq:poisson oracle optimal set}
        \Ic^*(x) := \{\theta \ge 0 : \pi_{\trueprior}(\theta \mid x) \ge k^*\}, \qquad x = 0,1,2,\cdots,
    \end{align}
    with $\pi_{\trueprior}(\theta \mid x)$ defined in \eqref{eq:poisson posterior density} satisfies
    \begin{align}\label{eq:poisson oracle threshold exact}
        \P_{\trueprior}(\theta_i \in \Ic^*(X_i)) = 1-\beta.
    \end{align}
    Moreover, $\Ic^*$ solves \eqref{eq:poisson optimal marginal set problem} up to Lebesgue-null sets.
\end{theorem}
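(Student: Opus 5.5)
The plan is a Neyman--Pearson style argument for the joint law of $(\theta_i,X_i)$, arranged to match the assumptions of Theorem 3.1 of \citet{kim2026empiricalbayesestimationinference}. Write $h(\theta,x):=p_\theta(x)g_{\trueprior}(\theta)$ for the joint density of $(\theta_i,X_i)$ with respect to Lebesgue $\times$ counting measure on $(0,\infty)\times\{0,1,\dots\}$. Then
\begin{align*}
\P_{\trueprior}(\theta_i\in\Ic(X_i))=\sum_x\int_{\Ic(x)}h(\theta,x)\,\diff\theta,\qquad \E_{\trueprior}|\Ic(X_i)|=\sum_x f_{\trueprior}(x)\,|\Ic(x)|.
\end{align*}
Thus \eqref{eq:poisson optimal marginal set problem} is a fractional knapsack problem over the product space. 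We minimize the measure $\mu(\diff\theta,x)=f_{\trueprior}(x)\diff\theta$ subject to $\int \pi_{\trueprior}\,\diff\mu\ge 1-\beta$, since $h(\theta,x)=\pi_{\trueprior}(\theta\mid x)f_{\trueprior}(x)$. We will first check that $f_{\trueprior}(x)>0$ for every $x$; this holds because the negative binomial pmf is positive. Hence $\pi_{\trueprior}$ is well defined everywhere.

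\textbf{Existence and uniqueness of $k^*$.} Define $\Phi(k):=\sum_x f_{\trueprior}(x)\int \pi_{\trueprior}(\theta\mid x)\1v\{\pi_{\trueprior}(\theta\mid x)\ge k\}\diff\theta$. This is nonincreasing, right-continuous in the appropriate sense by monotone convergence, and satisfies $\Phi(0)=1$ and $\Phi(k)\to0$ as $k\to\infty$. Continuity and strict monotonicity on the relevant range are the key properties. Continuity requires the level sets $\{\theta:\pi_{\trueprior}(\theta\mid x)=k\}$ to be Lebesgue null for every $k>0$ and every $x$.

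We will prove this by analyticity. For fixed $x$, the map $\theta\mapsto \pi_{\trueprior}(\theta\mid x)\propto \theta^{x+\kappa_*-1}e^{-\theta}\int\lambda^{\kappa_*}e^{-\lambda\theta}\diff\trueprior(\lambda)$ is real-analytic on $(0,\infty)$, since the Laplace transform of a measure on $(0,\infty)$ is analytic for $\mathrm{Re}\,\theta>0$. It is also nonconstant, because it is integrable and positive, so it cannot equal a positive constant on $(0,\infty)$. A nonconstant analytic function has isolated level sets, hence null ones. This step is the main obstacle: it is where the Poisson/Gamma structure replaces the Gaussian smoothness used in the earlier paper.

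Strict decrease of $\Phi$ on $\{k:\Phi(k)\in(0,1)\}$ follows because $\pi_{\trueprior}(\cdot\mid x)>0$ is continuous. Any interval $[k_1,k_2)$ of values with $k_2\le\sup_\theta\pi_{\trueprior}$ is therefore attained on a set of positive measure. The intermediate value theorem then gives a unique $k^*$ with $\Phi(k^*)=1-\beta$, which is \eqref{eq:poisson oracle threshold exact}. We must also check $k^*>0$ so that $\Ic^*(x)$ has finite length. Because $\beta>0$, $\Phi(k)\to1$ only as $k\downarrow0$, so $k^*>0$. Markov's inequality then gives $|\Ic^*(x)|\le 1/k^*$.

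\textbf{Optimality.} Let $\Ic$ be any rule with coverage at least $1-\beta$. Pointwise we have $(\1v_{\Ic^*}-\1v_{\Ic})(\pi_{\trueprior}-k^*)\ge0$. Integrating against $\mu$ gives
\begin{align*}
k^*\big(\E|\Ic|-\E|\Ic^*|\big)\ge \P(\theta\in\Ic)-\P(\theta\in\Ic^*)\ge0,
\end{align*}
so $\Ic^*$ is optimal. If $\Ic$ is also optimal, both inequalities become equalities. Hence the integrand vanishes $\mu$-a.e., so $\1v_{\Ic}=\1v_{\Ic^*}$ off $\{\pi_{\trueprior}=k^*\}$. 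That set is null by the analyticity step, which yields uniqueness up to Lebesgue-null sets.

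In writing this up, we will present these verifications as the hypotheses of Theorem 3.1 of \citet{kim2026empiricalbayesestimationinference}:
\begin{itemize}
\item positivity of $f_{\trueprior}$;
\item continuity and positivity of the posterior;
\item null level sets;
\item finiteness of $|\Ic^*(x)|$.
\end{itemize}
With these in place, we invoke that theorem rather than redoing the Lagrangian argument.
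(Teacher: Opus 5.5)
Your proposal is correct and takes essentially the paper's route. The paper also reduces to Theorem 3.1 of \citet{kim2026empiricalbayesestimationinference}, whose hypotheses there are that $Y_i=\pi_{\trueprior}(\theta_i\mid X_i)$ has no atoms and interval support. It verifies these with your two ingredients: real-analyticity and nonconstancy of $\theta\mapsto\pi_{\trueprior}(\theta\mid x)$, giving discrete level sets (your continuity of $\Phi$), and the image of each $\pi_{\trueprior}(\cdot\mid x)$ being an interval with $0$ in its closure (your strict monotonicity of $\Phi$).

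Your explicit Neyman--Pearson argument simply unpacks what that theorem packages. Two small points need tightening. The image reaching down to $0$ follows from integrability (or from $\pi_{\trueprior}(\theta\mid x)\to0$ as $\theta\to\infty$), not from positivity and continuity alone. Also, $\Phi(k)=\P_{\trueprior}(Y_i\ge k)$ is in general left-continuous, not right-continuous; this is harmless once atoms are ruled out.
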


Because $\Ic^*$ is obtained by thresholding the posterior density, it induces a natural nested family of optimal sets as the target coverage level $1-\beta$ varies. Moreover, in contrast to highest posterior density (HPD) sets which use an $x$-dependent threshold to enforce conditional posterior content, the optimal marginal coverage sets in \eqref{eq:poisson oracle optimal set} are defined by a single global threshold $k^*$. This makes it easy to construct the coverage sets simultaneously for all observation values.

\subsection{Estimation of optimal marginal coverage sets}
We construct a plug-in estimate of the oracle optimal marginal coverage set $\Ic^*$ in \eqref{eq:poisson oracle optimal set} using the smooth NPMLE. First, we approximate the true posterior density $\pi_{\trueprior}(\theta \mid x)$ in \eqref{eq:poisson posterior density} by the estimated posterior density $\pi_{\npmle}(\theta \mid x)$ in \eqref{eq:poisson estimated posterior density}. Theorem~\ref{thm:poisson posterior density convergence} shows that this approximation is accurate.
 Next, motivated by \eqref{eq:poisson oracle threshold exact}, we estimate the oracle threshold $k^*$ by the plug-in quantity
\begin{align}\label{eq:poisson estimated threshold}
    \hat{k}_n := \sup\Bigg\{k \ge 0 :
    \sum_{x=0}^{\infty}\int_0^{\infty}
    \1v\big(\pi_{\npmle}(\theta \mid x) \ge k\big)
    p_{\theta}(x) g_{\npmle}(\theta)\diff \theta
    \ge 1-\beta \Bigg\},
\end{align}
Equivalently, $\hat{k}_n$ is obtained by replacing $\trueprior$ in \eqref{eq:poisson oracle optimal set} and \eqref{eq:poisson oracle threshold exact} with $\npmle$. We then define the EB marginal coverage set
\begin{align}\label{eq:poisson estimated optimal set}
    \hat{\Ic}_n(x) := \{\theta \ge 0 : \pi_{\npmle}(\theta \mid x) \ge \hat{k}_n\}.
\end{align}
The following theorem shows that the EB marginal coverage set \eqref{eq:poisson estimated optimal set} attains the target marginal coverage level $1-\beta$ asymptotically. We provide its proof in Appendix~\ref{app:proof of thm:poisson-opt-coverage}.

\begin{theorem}\label{thm:poisson-opt-coverage}
    Under the assumptions of Theorem~\ref{thm:poisson convergence of smooth NPMLE} with the same $\alpha_* \in (0,1/2)$, let $(X,\theta)$ be independent of $\{(X_i,\theta_i)\}_{i=1}^{n}$ and distributed according to the joint law induced by \eqref{eq:poisson hierarchical model}. Then,
    \begin{align}\label{eq:poisson-opt-coverage}
        \E_{\trueprior}\left[\left|\P_{\trueprior}(\theta \in \hat{\Ic}_n(X)\mid \npmle) - (1-\beta)\right|\right]
        \lesssim_{\kappa_*,L,U}
        n^{-\alpha_*}(\log n)^{(1-\kappa_*)_+ + \alpha_*}.
    \end{align}
    Here, $\P_{\trueprior}(\theta \in \hat{\Ic}_n(X)\mid \npmle)$ denotes the conditional marginal coverage probability of $\hat{\Ic}_n$ given $\npmle$, i.e., $\P_{\trueprior}(\theta \in \hat{\Ic}_n(X)\mid \npmle) = \sum_{x=0}^{\infty}\int_{0}^{\infty} \1v\bigl(\theta \in \hat{\Ic}_n(x)\bigr)\, p_{\theta}(x)g_{\trueprior}(\theta)\,\diff\theta$.
\end{theorem}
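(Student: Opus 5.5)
The plan is to compare the true coverage of $\hat{\Ic}_n$ with its plug-in coverage, which is $1-\beta$ by the choice of $\hat k_n$ (up to ties), and to bound the difference by the joint total variation between the true and estimated laws of $(\theta,X)$. Define the plug-in coverage functional
\[
\hat C(k) := \sum_{x=0}^\infty \int_0^\infty \1v\big(\pi_{\npmle}(\theta\mid x)\ge k\big)\, p_\theta(x) g_{\npmle}(\theta)\diff\theta .
\]
This function is nonincreasing and left-continuous in $k$, since $\{\pi \ge k\}=\bigcap_{k'<k}\{\pi\ge k'\}$. By the definition \eqref{eq:poisson estimated threshold}, we therefore get $\hat C(\hat k_n)\ge 1-\beta$.

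\textbf{Exact attainment of $1-\beta$.} I also need $\hat C(\hat k_n)\le 1-\beta$. This holds if the level sets $\{\theta:\pi_{\npmle}(\theta\mid x)=k\}$ are Lebesgue-null for every $k>0$ and $x$. Under the constraint $\npmle\in\Pc([L,U])$, the function $\theta\mapsto p_\theta(x)g_{\npmle}(\theta)$ is $\theta^{x+\kappa_*-1}$ times a nonzero finite mixture of exponentials $e^{-(\lambda+1)\theta}$. It is real-analytic on $(0,\infty)$ and nonconstant, so its level sets are countable. Hence $\hat C$ is continuous and $\hat C(\hat k_n)=1-\beta$. I expect this to be the same verification used for Theorem~\ref{thm:poisson-opt-marginal}, now applied to $\npmle$ in place of $\trueprior$.

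\textbf{Comparison with the true coverage.} Set $A_n=\{(\theta,x):\pi_{\npmle}(\theta\mid x)\ge\hat k_n\}$. Conditionally on $\npmle$, the true coverage is the mass of $A_n$ under the joint measure $p_\theta(x)g_{\trueprior}(\theta)\diff\theta$, while $\hat C(\hat k_n)$ is the mass of the same set under $p_\theta(x)g_{\npmle}(\theta)\diff\theta$. Since $A_n$ is fixed given $\npmle$,
\[
\big|\P_{\trueprior}(\theta\in\hat\Ic_n(X)\mid\npmle)-(1-\beta)\big| \le \sum_x\int p_\theta(x)\,|g_{\npmle}(\theta)-g_{\trueprior}(\theta)|\diff\theta = 2\,\mathrm{TV}(g_{\npmle},g_{\trueprior}),
\]
using $\sum_x p_\theta(x)=1$. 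Taking expectations and applying Theorem~\ref{thm:poisson convergence of smooth NPMLE} gives the claimed rate $n^{-\alpha_*}(\log n)^{(1-\kappa_*)_++\alpha_*}$.

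\textbf{Main obstacle.} The delicate step is the exact-attainment argument, which needs the level sets of $\pi_{\npmle}(\cdot\mid x)$ to be null for $k>0$. The case $\hat k_n=0$ also needs checking. It cannot occur for $\beta>0$, because $\hat C(k)\to 0$ as $k\to\infty$ while $\hat C$ is continuous, so some $k>0$ attains $1-\beta$. Alternatively, $\hat k_n=0$ would give full coverage $1\ne 1-\beta$, contradicting continuity. I also need $\hat C(k)\to0$ uniformly enough that the supremum in \eqref{eq:poisson estimated threshold} is finite. This follows because $\pi_{\npmle}(\cdot\mid x)$ is a bounded density for each $x$, and the total mass contributed by large $x$ is small by Markov's inequality together with dominated convergence. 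If the analytic level-set argument turned out awkward, I would fall back on a one-sided bound plus a continuity argument, but I expect the analyticity route to go through directly.
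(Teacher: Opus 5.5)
Your proposal is correct and follows essentially the same route as the paper. The exact plug-in coverage $\hat C(\hat k_n)=1-\beta$ comes from the null level sets of the real-analytic plug-in posterior density; the paper phrases this as $\pi_{\npmle}(\tilde\theta\mid\tilde X)$ having no atoms, reusing the argument of Theorem~\ref{thm:poisson-opt-marginal}. The coverage gap is then bounded by $\sum_x\int p_\theta(x)\,|g_{\npmle}(\theta)-g_{\trueprior}(\theta)|\,\diff\theta=2\,\mathrm{TV}(g_{\npmle},g_{\trueprior})$, and Theorem~\ref{thm:poisson convergence of smooth NPMLE} finishes the proof. One side remark is wrong but harmless: $\pi_{\npmle}(\cdot\mid 0)$ is unbounded near $0$ when $\kappa_*<1$, yet $\hat C(k)\to 0$ still holds, because $\hat C(k)$ is the upper tail probability of the almost surely finite random variable $\pi_{\npmle}(\tilde\theta\mid\tilde X)$, where $(\tilde\theta,\tilde X)$ is drawn from the fitted model.
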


A key advantage of the smooth NPMLE is that it can yield non-trivial confidence sets, since the estimated posterior density $\pi_{\npmle}(\theta \mid x)$ is smooth. By contrast, under the classical NPMLE for \eqref{eq:poisson model}, the posterior distribution is discrete, so the corresponding plug-in sets may be harder to interpret.

\begin{remark}[Computation of the threshold]
Instead of evaluating the sum and integral in \eqref{eq:poisson estimated threshold}, we may estimate $\hat{k}_n$ via Monte Carlo simulation: draw $\tilde{\lambda}_b \overset{iid}\sim \npmle$, then $\tilde{\theta}_b \mid \tilde{\lambda}_b \sim \mathrm{Gamma}(\kappa_*, \tilde{\lambda}_b)$ and $\tilde{X}_b \mid \tilde{\theta}_b \sim \mathrm{Poi}(\tilde{\theta}_b)$. We can then estimate $\hat{k}_n$ by the lower $\beta$-quantile of $\{\pi_{\npmle}(\tilde{\theta}_b \mid \tilde{X}_b)\}_{b=1}^{B}$.
\end{remark}
\begin{figure}[t]
    \centering
    \begin{subfigure}[t]{0.49\linewidth}
        \centering
        \includegraphics[width=\linewidth]{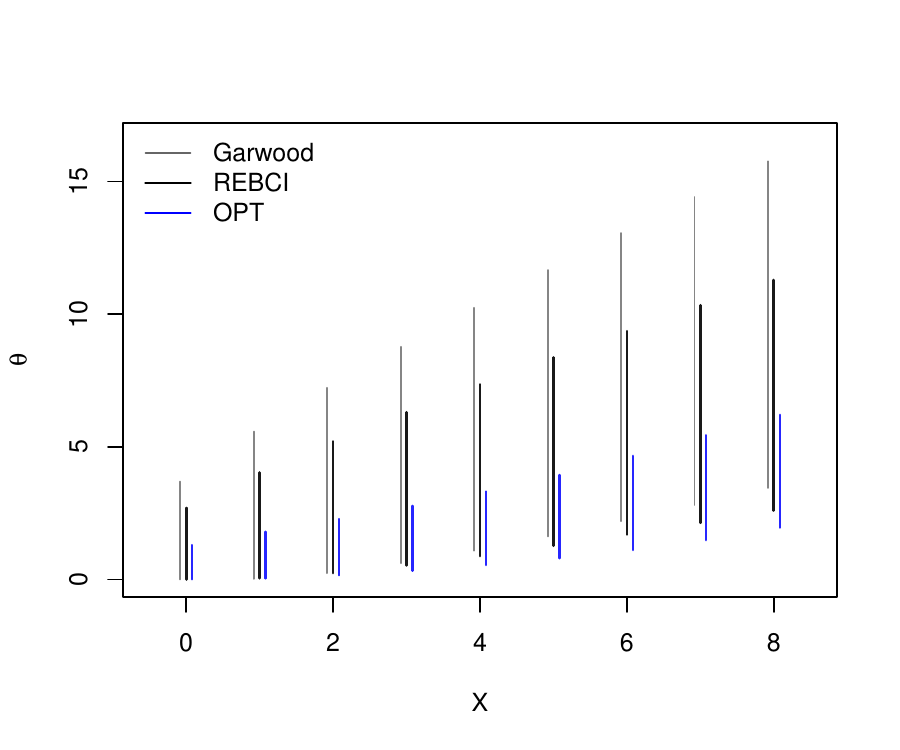}
        \caption{$G^* = \mathrm{Gamma}(2,2)/2 + \mathrm{Gamma}(2,4)/2$}
    \end{subfigure}
    \hfill
    \begin{subfigure}[t]{0.49\linewidth}
        \centering
        \includegraphics[width=\linewidth]{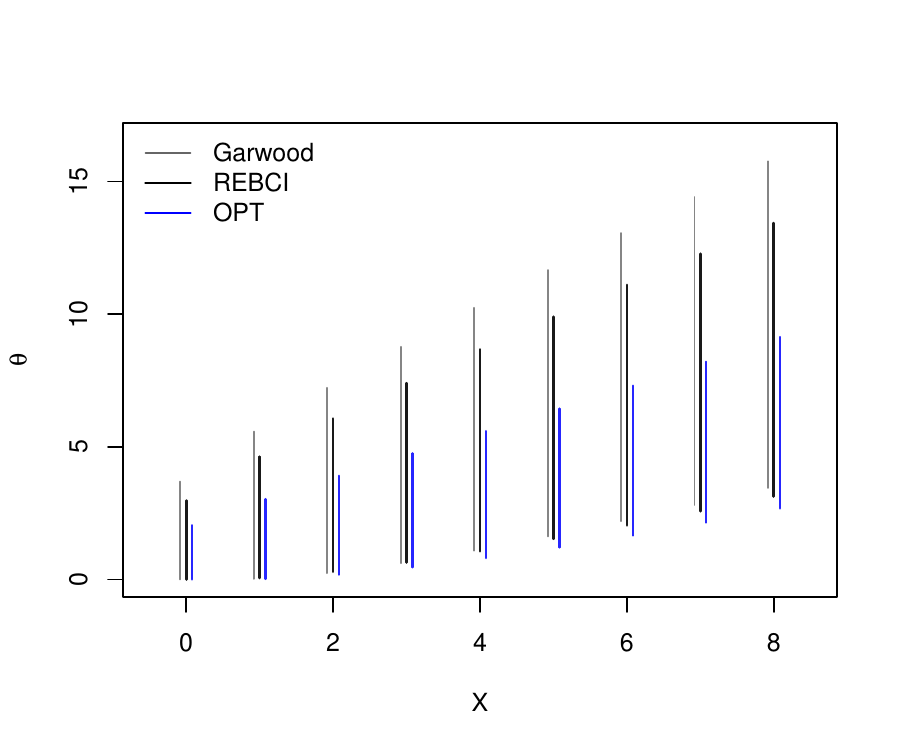}
        \caption{$G^* = \mathrm{Lognormal}(0,1)$}
    \end{subfigure}
    \caption{Comparison of our EB marginal coverage sets (OPT) with the Garwood confidence intervals \citep{Garwood1936} and the robust EB confidence intervals (REBCI) of \citet{Armstrong2022} at the nominal level $0.95$. The left panel uses the same prior as Figure~\ref{fig:SNPML-Poisson} and the right panel uses a misspecified lognormal prior.}
    \label{fig:OPT Comparison}
\end{figure}

Figure~\ref{fig:OPT Comparison} illustrates the substantial gain achieved by the EB marginal coverage sets. Our procedure is noticeably shorter than the competing methods, even under misspecification; $\mathrm{Lognormal}(0,1)$ does not belong to the Gamma mixture prior family. In Section~\ref{sec:simulation}, we demonstrate that our procedure yields the shortest sets while maintaining the desired coverage via simulation.

\section{Identifiability of the hierarchical Poisson model}\label{sec:identifiability-poisson}
So far, we have assumed that $\kappa_*$ in the hierarchical Poisson model \eqref{eq:poisson hierarchical model} is known. In this section, we discuss the case where $\kappa_*$ is unknown. In this case, the hierarchical Poisson model \eqref{eq:poisson hierarchical model} is non-identifiable. This is because the Gamma mixture prior family induced by \eqref{eq:poisson hierarchical model} is nested in the shape parameter as shown in the following lemma. We prove Lemma~\ref{lem:nestedness of Gamma mixture} in Appendix~\ref{app:proof of lem:nestedness of Gamma mixture}.

\begin{lemma}\label{lem:nestedness of Gamma mixture}
For each $\kappa > 0$, let
\begin{align}\label{eq:gamma mixture nested model}
    \Gc_{\kappa} := \{g_{H, \kappa}: H \in \Pc((0,\infty) ) \}, \qquad g_{H,\kappa}(\theta) := \int \frac{\lambda^{\kappa}}{\Gamma(\kappa)} \theta^{\kappa - 1} e^{-\lambda \theta} \diff H(\lambda), \quad \theta > 0.
\end{align}
Then for any $0<\kappa<\kappa'$, we have $\Gc_\kappa \subseteq \Gc_{\kappa'}$.
\end{lemma}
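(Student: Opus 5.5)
It suffices to show that every single Gamma density with shape $\kappa$ lies in $\Gc_{\kappa'}$; that is, each $\mathrm{Gamma}(\kappa,\lambda)$ density can be written as a mixture over rates of $\mathrm{Gamma}(\kappa',\mu)$ densities. Given this, for $g_{H,\kappa}\in\Gc_\kappa$ we substitute the representation inside the integral over $H$. Fubini/Tonelli (all integrands are nonnegative) then shows $g_{H,\kappa}=g_{H',\kappa'}$, where $H'$ is the induced mixture of the mixing laws.

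The key step is a beta-scaling identity. Let $\theta\sim\mathrm{Gamma}(\kappa,\lambda)$. Write $\theta = \tilde\theta\, B$ with $\tilde\theta\sim \mathrm{Gamma}(\kappa',\lambda)$ and $B\sim\mathrm{Beta}(\kappa,\kappa'-\kappa)$ independent. This is the standard beta–gamma algebra: if $Y_1\sim\mathrm{Gamma}(\kappa,\lambda)$ and $Y_2\sim\mathrm{Gamma}(\kappa'-\kappa,\lambda)$ are independent, then $Y_1+Y_2\sim\mathrm{Gamma}(\kappa',\lambda)$ is independent of $Y_1/(Y_1+Y_2)\sim\mathrm{Beta}(\kappa,\kappa'-\kappa)$. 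Conditionally on $B=b$, $\theta=b\tilde\theta\sim\mathrm{Gamma}(\kappa',\lambda/b)$, since scaling by $b$ divides the rate by $b$. Hence
\begin{align*}
\frac{\lambda^{\kappa}}{\Gamma(\kappa)}\theta^{\kappa-1}e^{-\lambda\theta}=\int_0^1 \frac{(\lambda/b)^{\kappa'}}{\Gamma(\kappa')}\theta^{\kappa'-1}e^{-(\lambda/b)\theta}\,\mathrm{Beta}(\kappa,\kappa'-\kappa)(\diff b),\qquad \theta>0.
\end{align*}
The mixing measure over rates is the law of $\mu=\lambda/B\in(\lambda,\infty)\subset(0,\infty)$, a genuine probability measure on $(0,\infty)$.

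To be safe, I would also verify this identity directly rather than only distributionally. Put $\mu=\lambda/b$, integrate over $b\in(0,1)$ against the Beta density, and reduce the result to a Gamma integral in the variable $s=\mu-\lambda$ or $t=1/b-1$. This recovers $\lambda^\kappa\theta^{\kappa-1}e^{-\lambda\theta}/\Gamma(\kappa)$.

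Then, for general $H$, define $H'$ as the law of $\lambda/B$ with $\lambda\sim H$ and $B\sim\mathrm{Beta}(\kappa,\kappa'-\kappa)$ independent. Then $H'\in\Pc((0,\infty))$, and Tonelli gives $g_{H,\kappa}=g_{H',\kappa'}$. The only potential obstacle is the bookkeeping of the rate parametrization under scaling, together with measurability of $H'$; both are routine.
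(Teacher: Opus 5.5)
Your proposal is correct and follows essentially the same route as the paper. The paper also uses the beta--gamma identity with $W\sim\mathrm{Beta}(\kappa,\kappa'-\kappa)$ to write each $\mathrm{Gamma}(\kappa,\beta)$ density as a mixture of $\mathrm{Gamma}(\kappa',\beta/w)$ densities over the law $K_\beta$ of $\beta/W$, and then sets $H'=\int K_\beta\,\diff H(\beta)$ and concludes by Fubini.
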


Consequently, there may be multiple choices of $(\kappa, H)$ that induce the same prior density $g_{\trueprior}$ in \eqref{eq:poisson true prior density}. Therefore, we target the quantity
\begin{align}\label{eq:kappa0}
    \kappa_0 \equiv \kappa_0(G^*) := \inf\{\kappa > 0: g_{\trueprior} \in \Gc_{\kappa} \}
\end{align}
where $g_{\trueprior}$ is the true prior density \eqref{eq:poisson true prior density} and $\Gc_{\kappa}$ is defined in \eqref{eq:gamma mixture nested model}. That is, $\kappa_0$ is trying to capture the least amount of Gamma smoothing needed to represent the true prior. We note that, under \eqref{eq:poisson hierarchical model}, $\kappa_0>0$ and the infimum in \eqref{eq:kappa0} is indeed attained; see Appendix~\ref{app:proof of smallest shape parameter}.

\subsection{Estimation of $\kappa_0$ via the neighborhood procedure}
A natural question is how to estimate $\kappa_0$. Our proposed procedure builds on the neighborhood procedure introduced by \citet{Donoho1988}; see also \citep{Donoho2013, kim2026empiricalbayesestimationinference}. To this end, we express $\kappa_0$ in \eqref{eq:kappa0}, which is defined through the prior, as a functional of the observable distribution. For $H\in\Pc((0,\infty))$ and $\kappa > 0$, let $f_{H,\kappa}$ be the marginal probability density function obtained by replacing $\kappa_*$ with $\kappa$ in \eqref{eq:poisson marginal density}. For each $\kappa > 0$, define
\begin{align}\label{eq:observable distributions}
    \Fc_{\kappa} := \{F_{H,\kappa}: H \in \Pc((0,\infty) )\}, \qquad F_{H,\kappa}(m) := \sum_{x=0}^m f_{H,\kappa}(x), \quad m \in \mathbb{Z}_+.
\end{align}
That is, $\Fc_\kappa$ is the class of observable count distribution functions induced by Gamma mixture priors with shape parameter $\kappa$. Let $F^*$ denote the true cumulative distribution function of $X_1$ under \eqref{eq:poisson hierarchical model}. Then, by identifiability of Poisson mixtures \citep{Teicher1961}, the observable representation in \eqref{eq:observable distributions} yields
\begin{align}\label{eq:kappa0 and observable distribution}
    \kappa_0 := \inf\{\kappa > 0 : F^* \in \Fc_{\kappa}\}.
\end{align}
Following \citet{Donoho1988}, we define the functional
\begin{align}\label{eq:J functional}
    J(F) := \inf\{\kappa > 0 : F \in \Fc_{\kappa}\}
\end{align}
and its lower envelope
\begin{align}\label{eq:J envelope}
    J(F;\eta) := \inf\{J(Q) : \KS(Q,F) \le \eta\}, \qquad \eta \ge 0.
\end{align}
Here, $\KS(F_1,F_2) := \sup_{m \in \mathbb{Z}_+}|F_1(m)-F_2(m)|$ denotes the Kolmogorov-Smirnov distance for two distribution functions $F_1$ and $F_2$ on $\mathbb{Z}_+$. We note that $F^*$ can be well-estimated by $\F_n$, the empirical distribution function of $X_1,\ldots,X_n$, under the Kolmogorov-Smirnov distance \citep{Massart1990}. Since $J(F^*)=\kappa_0$ by \eqref{eq:kappa0 and observable distribution}, we have the following relation:
\begin{align}\label{eq:poisson-envelope}
    \KS(\F_n,F^*) \le \eta
    \quad \Longrightarrow \quad
    \kappa_0 = J(F^*) \ge J(\F_n;\eta) \ge J(F^*;2\eta).
\end{align}
Motivated by \eqref{eq:poisson-envelope}, we define
\begin{align}\label{eq:poisson neighborhood procedure}
    \hat\kappa_0 := \inf\{\kappa > 0 : \delta_n(\kappa) \le \eta_n\}, \qquad \delta_n(\kappa) &:= \inf_{H \in \Pc((0,\infty))} \KS(F_{H,\kappa}, \F_n)
\end{align}
for a given positive sequence $\eta_n \to 0$. Note that $\delta_n(\kappa)$ is nonincreasing in $\kappa$. The following theorem shows that $\hat\kappa_0$ in \eqref{eq:poisson neighborhood procedure} converges almost surely to $\kappa_0$ in \eqref{eq:kappa0} for an appropriate choice of $\eta_n$. We provide its proof in Appendix~\ref{app:proof of thm:poisson-kappa-consistency} by establishing a lower semi-continuity of the functional $J$ at $F^*$. 

\begin{theorem}\label{thm:poisson-kappa-consistency}
    Suppose that $\eta_n \to 0$ and $\KS(\F_n, F^*) \le \eta_n$ for all sufficiently large $n$ almost surely. Then, $\hat\kappa_0$ in \eqref{eq:poisson neighborhood procedure} converges to $\kappa_0$ in \eqref{eq:kappa0} almost surely. For example, if $\eta_n = C\sqrt{(\log n)/n}$ for some $C > 1/\sqrt{2}$, then $\hat\kappa_0 \to \kappa_0$ almost surely as $n \to \infty$.
\end{theorem}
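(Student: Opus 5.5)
Write $\delta_n(\kappa) = \inf_H \KS(F_{H,\kappa},\F_n)$ as in \eqref{eq:poisson neighborhood procedure}. The argument is a Donoho-style sandwich: an upper bound $\limsup\hat\kappa_0 \le \kappa_0$, and a lower bound $\liminf \hat\kappa_0 \ge \kappa_0$ that rests on lower semicontinuity of $J$ at $F^*$. Fix an outcome in the almost-sure event where $\KS(\F_n,F^*)\le\eta_n$ for all large $n$.

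\textbf{Upper bound.} By Appendix~\ref{app:proof of smallest shape parameter} the infimum defining $\kappa_0$ is attained, so $F^*\in\Fc_{\kappa_0}$.
Hence $\delta_n(\kappa_0)\le \KS(F^*,\F_n)\le \eta_n$ for all large $n$, which gives $\hat\kappa_0\le\kappa_0$ eventually. Without attainment the same conclusion follows from $F^*\in\Fc_{\kappa}$ for every $\kappa>\kappa_0$, using Lemma~\ref{lem:nestedness of Gamma mixture}.

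\textbf{Lower bound.} Fix $\epsilon>0$ with $\kappa:=\kappa_0-\epsilon>0$. It suffices to show $\liminf_n \delta_n(\kappa) >0$. Because $\delta_n$ is nonincreasing in $\kappa$, this yields $\delta_n(\kappa')>\eta_n$ for all $\kappa'\le\kappa$ once $n$ is large, and therefore $\hat\kappa_0\ge\kappa$.

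Suppose instead that along a subsequence there are $H_n$ with $\KS(F_{H_n,\kappa},\F_n)\to0$. Then $\KS(F_{H_n,\kappa},F^*)\to0$, so the pmfs $f_{H_n,\kappa}\to f^*$ pointwise on $\mathbb{Z}_+$. To exploit this, compactify the mixing space: view $H_n$ as measures on $[0,\infty]$. The kernel $\lambda\mapsto r_{\kappa,\lambda}(x)$ extends continuously to $[0,\infty]$, with $r_{\kappa,\infty}(x)=\1v(x=0)$ and $r_{\kappa,0}(x)=0$; the value at $0$ reflects mass escaping to infinity in $\theta$. By Prokhorov's theorem, pass to $H_n\Rightarrow \bar H$ on $[0,\infty]$. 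For each $x$, the map $\lambda\mapsto r_{\kappa,\lambda}(x)$ is bounded and continuous on the compact space, so $f^*(x)=\int r_{\kappa,\lambda}(x)\,\diff\bar H(\lambda)$.

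Summing over $x$ gives $1=\bar H((0,\infty])$, so $\bar H$ puts no mass at $0$. It remains to rule out mass at $\infty$, which would produce an atom of the prior at $\theta=0$. Since $\kappa$-Gamma mixtures of Poisson are Poisson mixtures, the representation says $f^*$ is the Poisson mixture with mixing measure $\bar H(\{\infty\})\delta_0+(1-\bar H(\{\infty\}))\,g_{\bar H|_{(0,\infty)},\kappa}\,\diff\theta$. Teicher's identifiability \citep{Teicher1961} then forces this measure to equal $G^*$. But $G^*$ has the density $g_{\trueprior}$ and no atom at $0$, so $\bar H(\{\infty\})=0$. Consequently $g_{\trueprior}=g_{\bar H,\kappa}\in\Gc_\kappa$ with $\kappa<\kappa_0$, contradicting the definition \eqref{eq:kappa0}.

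This lower-semicontinuity step is the main obstacle. The delicate points are the compactification, the continuity of the kernel at both endpoints $\lambda=0$ and $\lambda=\infty$, and excluding escape of mass, which is handled by the identifiability argument.

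\textbf{Choice of $\eta_n$.} For the stated example, the DKW inequality with Massart's constant \citep{Massart1990} gives $\P(\KS(\F_n,F^*)>\eta_n)\le 2e^{-2n\eta_n^2}=2n^{-2C^2}$. This is summable when $2C^2>1$. Borel–Cantelli then gives $\KS(\F_n,F^*)\le\eta_n$ eventually almost surely, and clearly $\eta_n\to0$.
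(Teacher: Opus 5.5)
Your proposal is correct and follows essentially the same route as the paper. The upper bound $\hat\kappa_0\le\kappa_0$ on the DKW/Borel--Cantelli event is identical. Your direct proof that $\liminf_n\delta_n(\kappa_0-\epsilon)>0$ is the paper's argument (lower semicontinuity of $J$ at $F^*$, combined with $J(F^*;3\eta_n)\le\hat\kappa_0$) written out in one piece: it uses the same compactification of the rate parameter, the same summation over $x$ to exclude mass at $\lambda=0$, and the same appeal to Teicher identifiability to exclude mass at $\lambda=\infty$.
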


\section{Simulation}\label{sec:simulation}
In this section, we study the finite-sample performance of the EB marginal coverage sets through simulation studies. We compare our procedure with the confidence intervals of \citet{Garwood1936} and the robust EB confidence intervals (REBCIs) of \citet{Armstrong2022}. In each setting, we generate $n = 1000$ independent observations and repeat the experiment $100$ times at nominal coverage level $1-\beta = 0.95$. We consider the following four priors under the Poisson mixture model \eqref{eq:poisson model}: (i) $G^* = \tfrac{1}{2}\,\mathrm{Gamma}(2,2) + \tfrac{1}{2}\,\mathrm{Gamma}(2,4)$; (ii) $G^* = \tfrac{3}{4}\mathrm{Gamma}(3,1) + \tfrac{1}{4}\mathrm{Gamma}(3,10)$; (iii) $G^* = \mathrm{Lognormal}(0,1)$; and (iv) $G^* = \tfrac{1}{2}\mathrm{IG}(1,1) + \tfrac{1}{2}\mathrm{IG}(3,9)$, where $\mathrm{IG}$ denotes the inverse-Gaussian distribution. Here, we choose a unimodal Gamma mixture prior in Setting (i) and a bimodal Gamma mixture prior in Setting (ii). The lognormal prior in Setting (iii) and the inverse-Gaussian mixture prior in Setting (iv) lie outside our Gamma mixture prior class. To construct the EB marginal coverage sets (OPTs) \eqref{eq:poisson estimated optimal set}, we use the practical procedure for computing $\hat\kappa_0$ described in Appendix~\ref{app:computation-neighborhood-procedure}.

Table~\ref{tab:poisson-simulations} reports the average coverage and length of each procedure over the $100$ replications. Across all four settings, our procedure yields substantially shorter average length than the competing methods while maintaining coverage close to the nominal 0.95 level. Note that REBCIs yield a non-trivial reduction in average length relative to the Garwood intervals, but they remain more conservative than our EB marginal coverage sets. Our procedure also performs well under misspecified priors, suggesting the flexibility of the Gamma mixture prior class.

\begin{table}[t]
\centering
\caption{Average coverage probabilities and lengths of the coverage sets for each setting. Standard deviations are reported in parentheses.}
\vspace{3mm}
\begin{tabular}{c|c|c|c|c|c}
Method & Metric & (i) & (ii) & (iii) & (iv) \\
\hline
\multirow{2}{*}{OPT} & Cov. & 0.951 (0.012) & 0.951 (0.008) & 0.947 (0.008) & 0.949 (0.008) \\
 & Len. & 1.654 (0.087) & 4.022 (0.111) & 3.073 (0.121) & 3.625 (0.103) \\
\hline
\multirow{2}{*}{Garwood} & Cov. & 0.989 (0.003) & 0.983 (0.004) & 0.985 (0.003) & 0.984 (0.004) \\
 & Len. & 4.920 (0.042) & 6.842 (0.084) & 5.974 (0.080) & 6.466 (0.082) \\
\hline
\multirow{2}{*}{REBCI} & Cov. & 0.992 (0.003) & 0.983 (0.005) & 0.983 (0.004) & 0.984 (0.004) \\
 & Len. & 3.686 (0.074) & 6.186 (0.106) & 4.905 (0.147) & 5.717 (0.104) \\
\end{tabular}
\label{tab:poisson-simulations}
\end{table}

\section{Real data example}\label{sec:real-data-example}
We illustrate our procedure using the 2017--18 and 2018--19 National Hockey League (NHL) skater goal dataset available at \href{https://www.hockey-reference.com/}{\texttt{https://www.hockey-reference.com/}}. We use the data for $n = 745$ players who played in both the 2017--18 and 2018--19 seasons. Let $X_i$ denote the total number of goals scored by the $i$th player in the 2017--18 season, and assume that the Poisson mixture model \eqref{eq:poisson model} holds. \citet{Jana2025} use this dataset to illustrate the prediction performance of minimum-distance estimators (including the classical NPMLE) for the 2018--19 season based on 2017--18 season data. In Appendix~\ref{app:real-data-example-prediction}, we show that our procedure based on the smooth NPMLE also achieves prediction performance comparable to that of the classical NPMLE and other minimum-distance estimators. 

In this section, we focus on showing that the smooth NPMLE can also be used to make inference for $\{\theta_i\}_{i=1}^{n}$ based on the 2017--18 season data. Applying the neighborhood procedure in Section~\ref{sec:identifiability-poisson} and Appendix~\ref{app:computation-neighborhood-procedure}, we select $\hat{\kappa}_0 = 1.5$. We then compute the smooth NPMLE and construct the EB marginal coverage sets; see Figure~\ref{fig:nhl-real-data} for an illustration. Our smooth NPMLE indicates that the prior distribution is concentrated on low scoring rates and has a long right tail corresponding to high-scoring players. Our EB marginal coverage sets are shorter, on average, than Garwood intervals and REBCIs. Averaged over the observed skaters, the proposed sets have length $9.493$, compared with $11.830$ for Garwood intervals and $11.403$ for REBCIs. Thus, our procedure yields reductions of $19.8\%$ and $16.8\%$ in average length relative to Garwood intervals and REBCIs, respectively.

\begin{figure}[h]
\centering
\begin{minipage}[t]{0.32\textwidth}
\centering
\includegraphics[width=\linewidth]{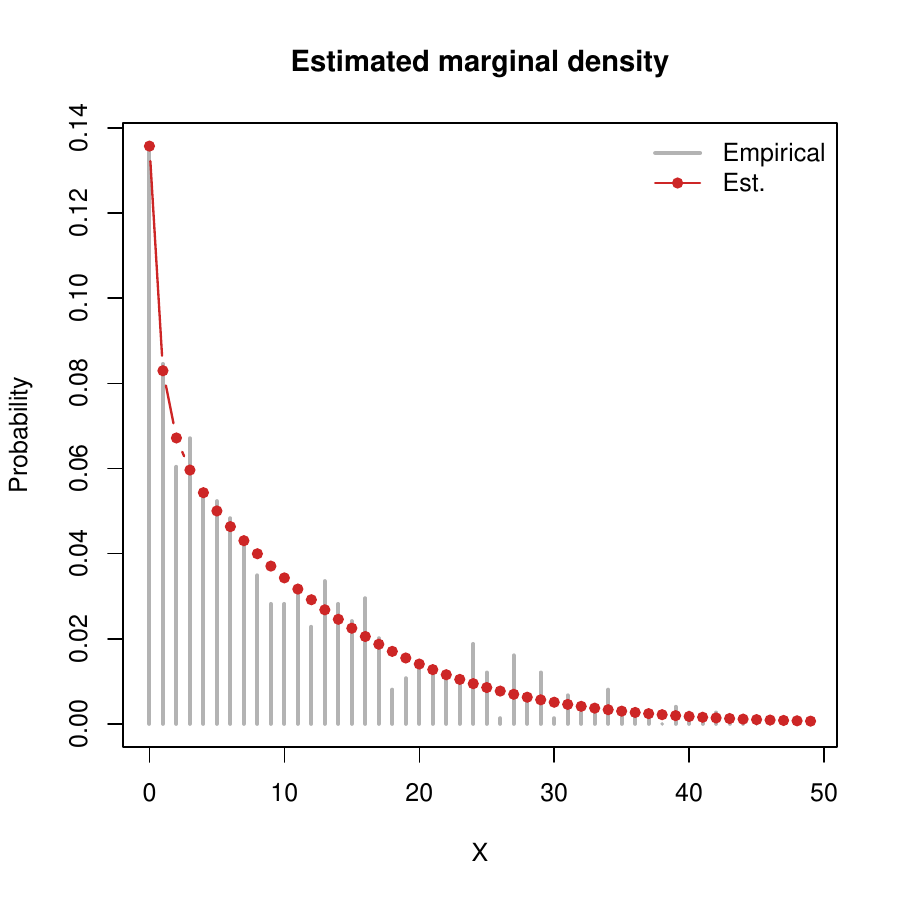}
\end{minipage}
\hfill
\begin{minipage}[t]{0.32\textwidth}
\centering
\includegraphics[width=\linewidth]{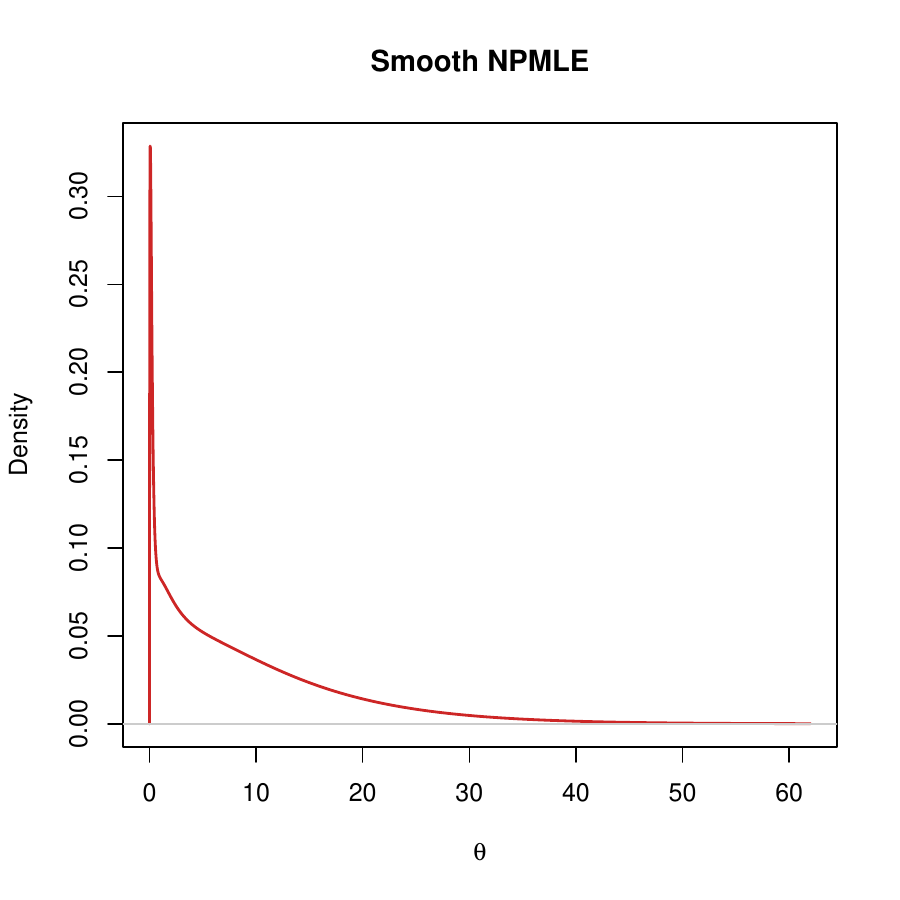}
\end{minipage}
\hfill
\begin{minipage}[t]{0.32\textwidth}
\centering
\includegraphics[width=\linewidth]{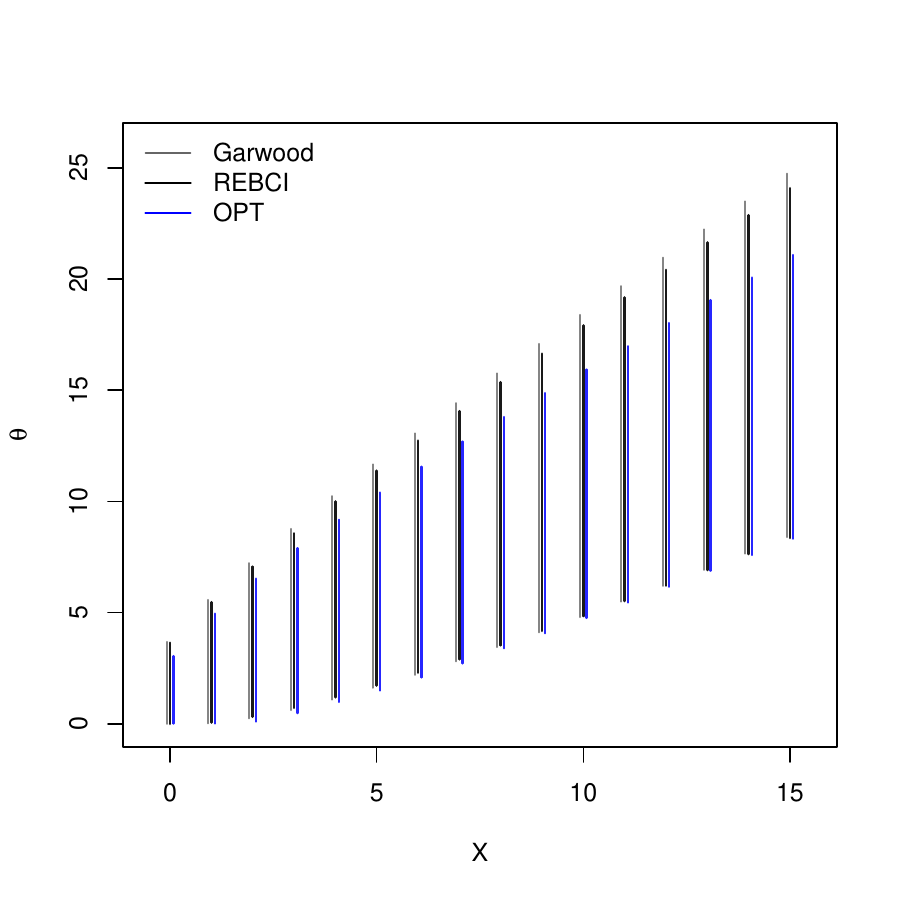}
\end{minipage}
\caption{EB analysis of the 2017--18 NHL skater goal dataset: estimated marginal density, smooth NPMLE and 95\% EB marginal coverage sets with Garwood intervals and REBCIs.}
\label{fig:nhl-real-data}
\end{figure}

\section*{Acknowledgements}
The author would like to thank Bodhisattva Sen for helpful discussions.
\clearpage
\bibliographystyle{plainnat}
\bibliography{bib}

\clearpage
\phantomsection\label{supplementary-material}
\bigskip
\begin{center}
{\LARGE \bf Appendix}
\end{center}
\appendix
\section{Discussion on the polynomial convergence rate of the smooth NPMLE}\label{app:illustration-smooth-NPMLE}
\paragraph*{Illustration of the polynomial rate}
First, we provide simulation results to illustrate that the total variation distance between $g_{\npmle}$ and $g_{\trueprior}$ decays at a polynomial rate. In Figure~\ref{fig:illustration-TV}, the true priors are (i) $\tfrac{1}{2}\mathrm{Gamma}(1,1) + \tfrac{1}{2}\mathrm{Gamma}(1,2)$, (ii) $\tfrac{1}{2}\mathrm{Gamma}(2,1) + \tfrac{1}{2}\mathrm{Gamma}(2,2)$, (iii) $\tfrac{1}{2}\mathrm{Gamma}(2,2) +  \tfrac{1}{2}\mathrm{Gamma}(2,4)$, and (iv) $ \tfrac{1}{2}\mathrm{Gamma}(4,2) +  \tfrac{1}{2}\mathrm{Gamma}(4,4)$. The empirical slopes are approximately $-0.291$, $-0.272$, $-0.260$, and $-0.215$ for panels (i), (ii), (iii), and (iv), respectively. These results indicate that the smooth NPMLE converges at a polynomial rate up to logarithmic factors. Although this rate is slower than the nearly parametric rate $n^{-1/2}$, it is clearly faster than a logarithmic rate. This behavior is consistent with Theorem~\ref{thm:poisson convergence of smooth NPMLE}. Here, in these simulations the NPMLE is computed without the technical support constraint $[L,U]$ assumed in Theorem~\ref{thm:poisson convergence of smooth NPMLE}.

\begin{figure}[htbp]
    \centering
    \includegraphics[width=\textwidth]{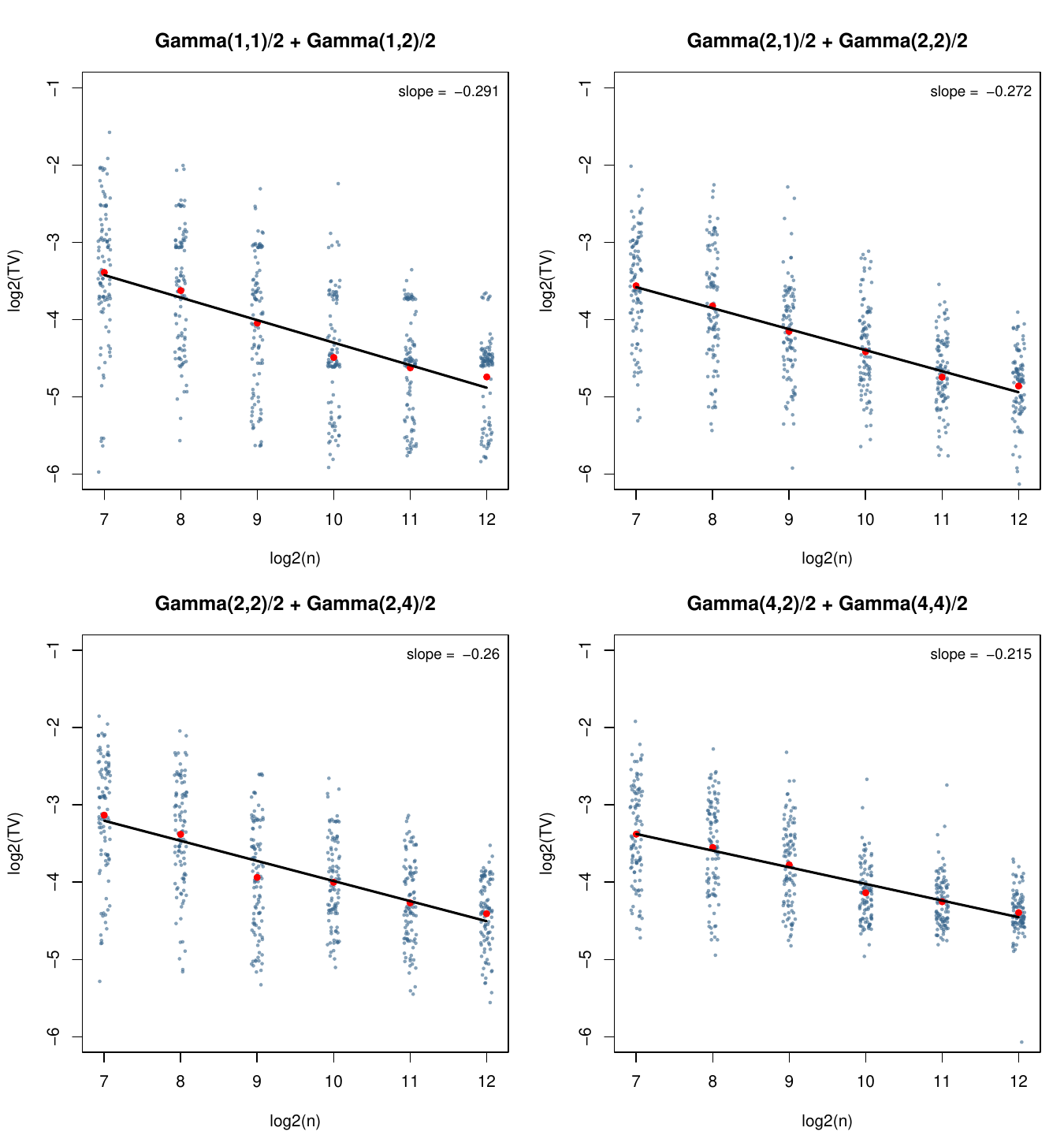}
    \caption{Total variation distance between the smooth NPMLE $g_{\npmle}$ in \eqref{eq:poisson estimated prior density} and the true prior density $g_{\trueprior}$ in \eqref{eq:poisson true prior density}. We consider sample sizes $n = 2^7, 2^8, \ldots, 2^{12}$ and perform $100$ Monte Carlo replications for each sample size. The red dots denote the averages of the log-scaled distances over the 100 replications, and the regression line is fitted using all simulated values.}
    \label{fig:illustration-TV}
\end{figure}

\paragraph*{Comparison with the Gaussian-smoothed optimal transport framework}
We now compare our polynomial convergence rate with results obtained under the Gaussian-smoothed optimal transport (GOT) framework of \citet{Goldfeld2020}. Recent works \citet{Han2023,teh2023} establish polynomial convergence rates for the classical NPMLE under the GOT distance. We note that this approach differs from our smooth NPMLE: they compute the NPMLE under the Poisson mixture model \eqref{eq:poisson model} and then compare Gaussian-smoothed distributions. In contrast, we compute the NPMLE $\hat{H}_n$ under the negative binomial mixture model \eqref{eq:negative binomial model} and then obtain the Gamma mixture prior density $g_{\npmle}$ in \eqref{eq:poisson estimated prior density}. 

\paragraph*{Proof sketch}
Our proof is nevertheless inspired by \citet{Han2023}: we reduce prior density estimation to marginal density estimation through an approximation argument; see Appendix~\ref{app:proof of thm:poisson convergence of smooth NPMLE} for a full proof. However, in our smooth NPMLE setting, this reduction takes a different form. After reparameterizing the model, we use the dual characterization of total variation to reduce the problem to controlling certain linear functionals of the estimated and true mixing laws. The key step is Lemma~\ref{lem:laguerre-approximation}, which approximates the resulting kernel transform uniformly by finite linear combinations of the negative binomial basis functions; its proof in turn relies on Lemma~\ref{lem:qpsi-polynomial-approximation}, the Laguerre derivative bound in Lemma~\ref{lem:poisson-laguerre-derivative}, and coefficient bounds from \citet{Han2023} based on \citep{Timan2014,DeVore1976}. Combining this approximation with the marginal density bound from Theorem~\ref{thm:poisson marginal density estimation} yields the polynomial rate. Similarly to \citet{Han2023}, our rate $\alpha_* \in (0,1/2)$ in Theorem~\ref{thm:poisson convergence of smooth NPMLE} may not be optimal, and establishing a matching lower bound is beyond the scope of this paper. However, we conjecture that the optimal rate depends on $(L,U)$ analogously to how the optimal rate depends on the problem parameters in the Gaussian case; see Theorem 2.2 of \citet{kim2026empiricalbayesestimationinference}.

\section{Computation of the neighborhood procedure for Poisson mixture models}\label{app:computation-neighborhood-procedure}
In this section, we provide a practical computation strategy for $\hat\kappa_0$ defined in \eqref{eq:poisson neighborhood procedure}. First, suppose that the neighborhood size $\eta_n$ is given. Then, for each $\kappa > 0$, we need to compute
\begin{align*}
    \delta_n(\kappa) = \inf_{H \in \Pc((0,\infty) )} \KS(F_{H,\kappa}, \F_n)
\end{align*}
where $F_{H,\kappa}$ is defined in \eqref{eq:observable distributions} and $\F_n$ is the empirical distribution function of $X_1, \ldots, X_n$. To compute it numerically, we approximate $H$ by a discrete distribution supported on a finite grid $\Lambda_M = \{\lambda_1,\ldots,\lambda_M\} \subset (0,\infty)$.  Writing
\begin{align*}
    H_w := \sum_{j=1}^M w_j\delta_{\lambda_j}, \qquad w_j \ge 0, \qquad \sum_{j=1}^M w_j = 1,
\end{align*}
the induced count distribution function is
\begin{align*}
    F_{H_w,\kappa}(m) = \sum_{j=1}^M w_j \sum_{x=0}^m \frac{\Gamma(x+\kappa)}{\Gamma(\kappa)x!}\left(\frac{1}{1+\lambda_j}\right)^x \left(\frac{\lambda_j}{1+\lambda_j}\right)^{\kappa}, \qquad m \in \mathbb Z_+.
\end{align*}
Let $X_{(n)} = \max_{1\le i \le n} X_i$. Since $\F_n(m)=1$ for every $m \ge X_{(n)}$ and $F_{H_w,\kappa}$ is nondecreasing in $m$, the Kolmogorov-Smirnov distance need only be evaluated over the finite set $\{0,1,\ldots,X_{(n)}\}$. Therefore, the finite-grid approximation
\begin{align*}
    \delta_{n,M}(\kappa) = \inf_{w \in \Delta_M}\max_{0 \le m \le X_{(n)}} |F_{H_w,\kappa}(m)-\F_n(m)|
\end{align*}
with $\Delta_M = \{w \in [0,1]^M : \sum_{j=1}^M w_j = 1\}$ can be computed by the linear program
\begin{equation}\label{eq:poisson-neighborhood-lp}
\begin{aligned}
    \underset{t \ge 0,\; w \in \Delta_M}{\text{minimize}} \quad & t \\
    \text{subject to} \quad
    & F_{H_w,\kappa}(m)-\F_n(m) \le t, \qquad m=0,\ldots,X_{(n)},\\
    & F_{H_w,\kappa}(m)-\F_n(m) \ge -t, \qquad m=0,\ldots,X_{(n)}.
\end{aligned}
\end{equation}
Once $\delta_{n,M}(\kappa)$ has been evaluated over a grid $\kappa_1 < \cdots < \kappa_L$ with a sufficiently large $\kappa_L$, we define the grid-based approximation
\begin{align*}
    \hat\kappa_{0,M} = \min\{\kappa_{\ell} : \delta_{n,M}(\kappa_{\ell}) \le \eta_n\}.
\end{align*}
For each fixed $\kappa$, the quantity $\delta_{n,M}(\kappa)$ is obtained by solving a linear program in $(t,w)$, and $\hat\kappa_{0,M}$ is then found by a one-dimensional search over the grid $\kappa_1 < \cdots < \kappa_L$. This makes the neighborhood procedure straightforward to implement with standard linear programming software.

It remains to discuss how to choose the neighborhood radius in practice. The consistency result in Theorem~\ref{thm:poisson-kappa-consistency} assumes that $\eta_n$ is a given deterministic sequence. In applications, however, we may treat $\eta$ as a tuning parameter and select it based on the data. We recommend $K$-fold cross-validation over a candidate grid of values of $\eta$. For each candidate $\eta$ and each fold, we compute $\hat\kappa_{0,M}$ from the training sample, fit the corresponding smooth NPMLE under $\kappa_*=\hat\kappa_{0,M}$, and evaluate the average held-out log-likelihood of the validation counts under the fitted marginal model. We then select the value of $\eta$ that maximizes the average validation log-likelihood across the $K$ folds. Finally, using this selected value, we compute $\hat\kappa_{0,M}$ on the full sample.

For the simulation studies in Section~\ref{sec:simulation} and the real data example in Section~\ref{sec:real-data-example}, we first select the neighborhood radius $\eta$ by five-fold cross-validation. We then evaluate $\delta_{n,M}(\kappa)$ on the full sample over the grid $\{0.1, 0.2, 0.3, \ldots, 6\}$ and set $\hat\kappa_{0,M}$ to be the smallest grid value of $\kappa$ such that $\delta_{n,M}(\kappa) \le \eta$.

\section{Comparison of prediction results for the NHL Data}\label{app:real-data-example-prediction}
In this section, we compare prediction results for the 2018--19 NHL season based on the 2017--18 data discussed in Section~\ref{sec:real-data-example}, using the minimum-distance estimators of \citet{Jana2025} and our smooth NPMLE. Let $X_i$ and $Z_i$ denote the goals scored by player $i$ in the 2017--18 and 2018--19 seasons, respectively. As noted by \citet{Jana2025}, if, conditionally on $\theta_i$, the past and future counts are independent and both follow the Poisson model with mean $\theta_i$, then the predictor of $Z_i$ that minimizes mean squared error is $\E[Z_i \mid X_i] = \E[\theta_i \mid X_i]$. Therefore, EB estimators of the posterior mean \eqref{eq:poisson posterior mean} with $\npmle$ can be used directly for future-season prediction.

On the sample of $745$ players, our implementation of the classical NPMLE under \eqref{eq:poisson model} yields a root mean squared error (RMSE) of $6.036$. This matches the NPMLE results reported in Table 1 of \citet{Jana2025}. Our smooth NPMLE yields an RMSE of $6.048$, which is essentially identical to that of the classical NPMLE. These values are also comparable to those of the minimum Hellinger and minimum $\chi^2$ estimators reported in Table 1 of \citet{Jana2025}, whose RMSEs are $6.02$ and $6.05$, respectively. Figure~\ref{fig:nhl-prediction-comparison} compares the observed future-season totals with the predictions from the classical NPMLE and the smooth NPMLE. Most predictions are nearly identical for values below $40$, and they differ only beyond $40$ where relatively few data points are available. This shows that our smooth NPMLE can achieve prediction performance comparable to that of the classical NPMLE and other minimum-distance estimators, while also providing uncertainty quantification. 

\begin{figure}[htbp]
\centering
\includegraphics[width=0.52\textwidth]{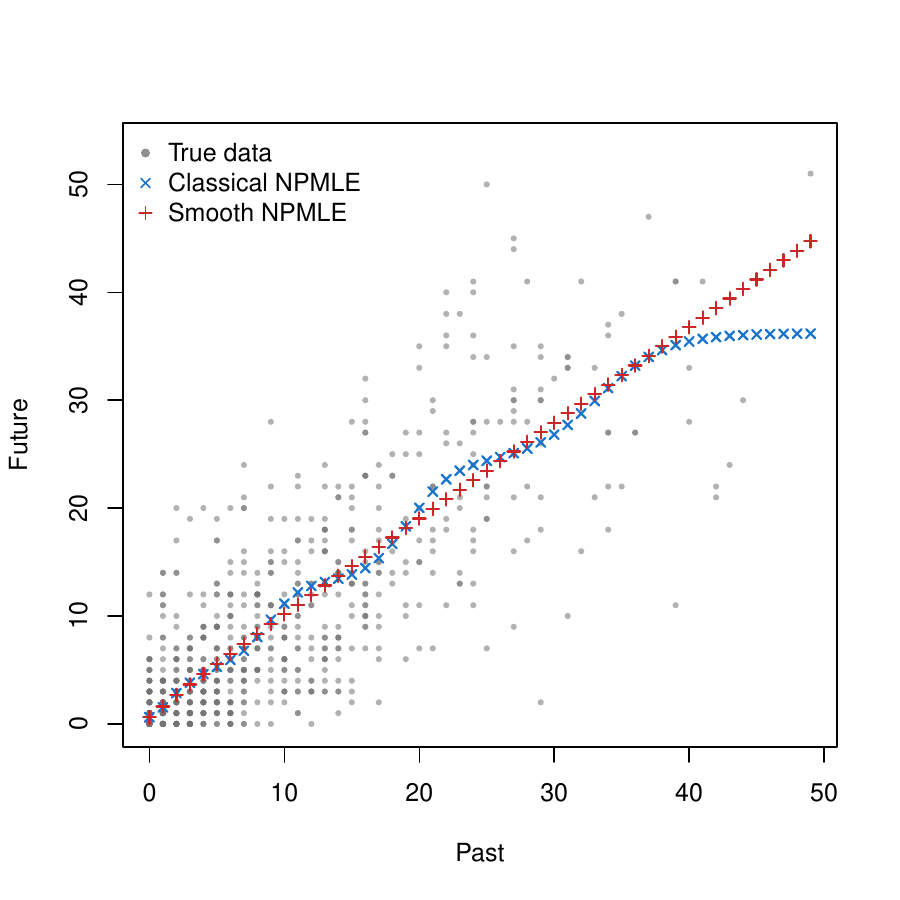}
\caption{Prediction of 2018--19 NHL goals from the 2017--18 season data. The gray points show the observed future-season goal totals against the past-season totals, while the blue X markers and red + markers show the posterior mean predictions under the classical NPMLE and the smooth NPMLE, respectively.}
\label{fig:nhl-prediction-comparison}
\end{figure}

\section{Proofs for Section~\ref{sec:poisson details}}\label{app:proof for sec:poisson details}
\subsection{Proof of Theorem~\ref{thm:poisson marginal density estimation}}\label{app:proof of thm:poisson marginal density estimation}
\begin{proof}[Proof of Theorem~\ref{thm:poisson marginal density estimation}]
To show \eqref{eq:poisson density estimation}, we adapt the proof of Theorem 2 in \citet{Jana2025}. We first note that the NPMLE $\npmle$ in \eqref{eq:poisson NPMLE} is the minimum-distance estimator with respect to the Kullback-Leibler (KL) divergence, i.e., $\npmle = \argmin_{H \in \Pc((0,\infty] )} \mathrm{KL}(p_n ||f_{H})$ where $p_n$ is the empirical distribution of $X_1, \ldots, X_n$ and $f_{H}$ is defined in \eqref{eq:poisson marginal density}. For any probability mass functions $p_1$ and $p_2$ on $\mathbb{Z}_+$, it holds that $\Hellinger^2(p_1, p_2) \le \mathrm{KL}(p_1 ||p_2) \le \chi^2(p_1, p_2)$. Then we have
\begin{align*}
    \Hellinger^2(f_{\npmle}, f_{\trueprior}) &\le (\Hellinger(f_{\npmle}, p_n) + \Hellinger(p_n, f_{\trueprior}))^2 \le 2(\Hellinger^2(p_{n}, f_{\npmle}) + \Hellinger^2(p_n, f_{\trueprior})) \\
    &\le 2(\mathrm{KL}(p_n || f_{\npmle}) + \mathrm{KL}(p_n|| f_{\trueprior})) \le 4\mathrm{KL}(p_n || f_{\trueprior}).
\end{align*}
Let $X_{(n)} = \max_{i} X_i$. For some $K > 0$ that will be defined later, we bound the KL divergence by the chi-squared divergence:
\begin{align*}
    &\E_{\trueprior}[\mathrm{KL}(p_n || f_{\trueprior})\1v(X_{(n)} < K)] \le \E_{\trueprior}[\chi^2(p_n || f_{\trueprior})\1v(X_{(n)} < K)] \\
    &= \sum_{x\ge0} \frac{\E_{\trueprior}[(p_n(x) - f_{\trueprior}(x))^2 \1v(X_{(n)} < K)]}{f_{\trueprior}(x)} \\
    &= \sum_{x < K} \frac{\E_{\trueprior}[(p_n(x) - f_{\trueprior}(x))^2 \1v(X_{(n)} < K)]}{f_{\trueprior}(x)} + \sum_{x \ge K} f_{\trueprior}(x) \P_{\trueprior}(X_{(n)} < K)
\end{align*}
where the last equality is due to the fact that $p_n(x) = 0$ for any $x \ge K$ on the event $\{X_{(n)} < K \}$. Note that $\E_{\trueprior}[p_n(x)] = f_{\trueprior}(x)$ and $\E_{\trueprior}[(p_n(x) - f_{\trueprior}(x))^2] = \Var_{\trueprior}[p_n(x)] = f_{\trueprior}(x)(1-f_{\trueprior}(x))/n$. Hence, we get
\begin{align*}
    \E_{\trueprior}[\mathrm{KL}(p_n || f_{\trueprior})\1v(X_{(n)} < K)] &\le \frac{1}{n}\sum_{x < K} (1- f_{\trueprior}(x)) + \sum_{x \ge K} f_{\trueprior}(x)\left(1- \sum_{x \ge K}f_{\trueprior}(x) \right)^n \\
    &\le \frac{K}{n} + \rho_K(\trueprior)(1-\rho_K(\trueprior))^n \le \frac{K}{n} + \rho_K(\trueprior)
\end{align*}
where we write
\begin{align}\label{eq:tail probabiliity}
    \rho_K(\trueprior) = \P_{\trueprior}(X \ge K) = \sum_{x=K}^{\infty} f_{\trueprior}(x).
\end{align}
Now, note that $\E_{\trueprior}[\Hellinger^2(f_{\trueprior}, f_{\npmle})\1v(X_{(n)} \ge K)] \le 2 \P_{\trueprior}(X_{(n)} \ge K) \le 2n\rho_K(\trueprior)$ by the union bound. Hence, we have
\begin{align*}
    \E_{\trueprior}[\Hellinger^2(f_{\trueprior}, f_{\npmle})] &\le \E_{\trueprior}[\Hellinger^2(f_{\trueprior}, f_{\npmle})\1v(X_{(n)} < K)] + \E_{\trueprior}[\Hellinger^2(f_{\trueprior}, f_{\npmle})\1v(X_{(n)} \ge K)] \\
    &\le 4\E_{\trueprior}[\mathrm{KL}(p_n || f_{\trueprior})\1v(X_{(n)} < K)] + 2n\rho_K(\trueprior) \\
    &\le \frac{4K}{n} + (4+ 2n)\rho_K(\trueprior).
\end{align*}
We need to pick $K$ so that the tail probability of the negative binomial mixture $\rho_K(\trueprior)$ satisfies $\rho_K(\trueprior) \lesssim n^{-2}$. Let $h = L/ (L+1)$, $t = -\log (1-h)/2$ and $A_h = \left(\frac{h}{1- \sqrt{1-h}} \right)^{\kappa_*} \in (0,\infty)$. We pick
\begin{align*}
    K = \Big\lceil \frac{5 \log n + \log A_h}{t} \Big\rceil.
\end{align*}
Then, by Lemma~\ref{lem:NB tail probability} below, we have $\rho_K(\trueprior) \le n^{-5}$ and thus we get \eqref{eq:poisson density estimation}.
\end{proof}

\begin{lemma}\label{lem:NB tail probability}
Fix $\kappa > 0$ and $h \in (0,1)$. Suppose that  $G \in \Pc([h,1))$ and $\{X_i\}_{i=1}^{n} \overset{iid}\sim f_{G}$ where $f_{G}$ is the negative binomial mixture density
\begin{align*}
    f_{G}(x)\;:=\; \int \frac{\Gamma(x+\kappa)}{x!\Gamma(\kappa)} p^{\kappa}(1-p)^x \diff G(p), \quad x =0, 1, \ldots.
\end{align*}
Let $t = -\log(1-h) / 2 > 0$ and $ A_h = \left(\frac{h}{1- \sqrt{1-h}} \right)^{\kappa} \in (0, \infty)$. For any $a > 1$, let $K = \big\lceil \frac{a \log n + \log A_{h}}{t} \big\rceil$. Then for any integer $s \ge 0$, 
\begin{align*}
    \P_{G}(X_1 \ge K + s) \le \frac{e^{-ts}}{n^a}.
\end{align*}
\end{lemma}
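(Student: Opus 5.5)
The approach is a Chernoff (exponential moment) bound applied to each negative binomial component, followed by averaging over the mixing law $G$. Fix $p \in [h,1)$ and let $Y \sim \mathrm{NB}(\kappa,p)$ with mass function $\frac{\Gamma(x+\kappa)}{x!\Gamma(\kappa)}p^\kappa(1-p)^x$. Its moment generating function is
\begin{align*}
    \E[e^{tY}] = \Big(\frac{p}{1-(1-p)e^{t}}\Big)^{\kappa},
\end{align*}
which is finite whenever $(1-p)e^t<1$. With $t=-\log(1-h)/2$ we have $e^t=(1-h)^{-1/2}$, so $(1-p)e^t \le (1-h)^{1/2}<1$ uniformly over $p\ge h$. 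Hence the mgf is finite for every component.

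The key step is a uniform bound on $\E[e^{tY}]$ over $p\in[h,1)$. Write $\phi(p)=p/(1-(1-p)e^t)$, so that $\phi(p)=1/\big(e^t-(e^t-1)/p\big)$. The denominator is increasing in $p$, so $\phi$ is decreasing and $\sup_{p\ge h}\phi(p)=\phi(h)=h/(1-(1-h)^{1/2})$. Therefore $\E[e^{tY}]\le A_h$ for all $p\in[h,1)$. The one thing to double-check here is the monotonicity direction; a sanity check is $\phi(1)=1\le\phi(h)$, consistent with $A_h\ge1$.

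Markov's inequality then gives $\P(Y\ge K+s)\le e^{-t(K+s)}\E[e^{tY}]\le A_h e^{-tK}e^{-ts}$. Integrating over $p\sim G$ yields $\P_G(X_1\ge K+s)\le A_h e^{-tK}e^{-ts}$. By the choice of $K$ we have $tK\ge a\log n+\log A_h$, so $A_h e^{-tK}\le n^{-a}$, and the claim follows. The assumption $a>1$ is not needed for this inequality itself.

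The main obstacle is only the uniform mgf bound. Once monotonicity of $\phi$ is verified, the rest is routine.
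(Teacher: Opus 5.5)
Your proposal is correct and follows essentially the same route as the paper: the closed-form mgf of each negative binomial component, the fact that it decreases in $p$ so that it is bounded by $A_h$ on $[h,1)$, Markov's inequality after averaging over $G$, and the choice of $K$ giving $A_h e^{-tK}\le n^{-a}$. The only cosmetic difference is that you establish monotonicity by rewriting $\phi(p)=1/\bigl(e^t-(e^t-1)/p\bigr)$, whereas the paper differentiates $\log$ of the mgf in $p$.
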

\begin{proof}[Proof of Lemma~\ref{lem:NB tail probability}]
    By the generalized binomial theorem, we have
    \begin{align*}
        (1-z)^{-\kappa} = \sum_{x=0}^{\infty} \frac{\Gamma(x+\kappa)}{x!\Gamma(\kappa)}z^x, \qquad \forall  \kappa > 0, \;\abs{z} < 1.
    \end{align*}
    Take $z = (1-p)r$ with some $r \ge 0$ so that $z < 1$. Then
    \begin{align*}
        (1-(1-p)r)^{-\kappa} = \sum_{x=0}^{\infty} \frac{\Gamma(x+\kappa)}{x!\Gamma(\kappa)}(1-p)^xr^x.
    \end{align*}
    By multiplying both sides by $p^{\kappa}$, we obtain
    \begin{align}\label{eq:expectation of r^X given p}
        \E[r^X \mid p] = \sum_{x=0}^{\infty} \frac{\Gamma(x+\kappa)}{x!\Gamma(\kappa)} p^{\kappa}(1-p)^x r^x = \left(\frac{p}{1-(1-p)r}\right)^{\kappa}
    \end{align}
    where $X\mid p \sim \mathrm{NB}(\kappa,p)$.  Now, set $r=e^t$ with $t = -\log(1-h)/2$ in \eqref{eq:expectation of r^X given p}. Then note that
    $(1-p)r = (1-p)e^t \le (1-h)e^t = \sqrt{1-h} < 1$ for any $p \in [h, 1)$ and
    \begin{align*}
        \E[e^{tX} \mid p] = \left(\frac{p}{1-(1-p)e^t} \right)^{\kappa}.
    \end{align*}
    For $p \in [h,1)$, note that
    \begin{align*}
        \frac{\diff}{\diff p}\log \E[e^{tX} \mid p] = \kappa\left(\frac{1}{p} - \frac{e^t}{1-(1-p)e^t} \right) = \frac{\kappa(1-e^t)}{p(1-(1-p)e^t)} < 0.
    \end{align*}
    Thus $p \mapsto \E[e^{tX} \mid p]$ is decreasing on $[h,1)$ and
    \begin{align*}
        \E_{G}[e^{tX}] = \int_{h}^{1} \E[e^{tX} \mid p] \diff G(p) \le \left(\frac{h}{1-(1-h)e^{t}} \right)^{\kappa} = A_h.
    \end{align*}
    By Markov's inequality, for any $m \ge 0$, it holds that
    \begin{align*}
        \P_{G}(X \ge m) = \P_G(e^{tX} \ge e^{tm}) \le e^{-tm} \E_{G}[e^{tX}] \le A_{h}e^{-tm}.
    \end{align*}
    Recall that we defined $K = \big\lceil \frac{a \log n + \log A_{h}}{t} \big\rceil$ for $a > 1$. Then we have $A_{h}e^{-tK} \le n^{-a}$. Set $m = K+s$. Then, it holds that
    \begin{align}\label{eq:tail inequality NB}
        \P_{G}(X \ge K+s) \le A_{h}e^{-t(K+s)} = A_{h}e^{-tK}e^{-ts} \le n^{-a}e^{-ts}.
    \end{align}
    This completes the proof.
\end{proof}

\subsection{Proof of Theorem~\ref{thm:poisson mean estimation}}\label{app:proof of thm:poisson mean estimation}
\begin{proof}[Proof of Theorem~\ref{thm:poisson mean estimation}]
To show \eqref{eq:poisson regret bound}, we adapt the proof of Theorem 3 in \citet{Jana2025}. We first note that, since $\trueprior \in \Pc([L,\infty))$, the true prior $G^*$ with density $g_{\trueprior}$ in \eqref{eq:poisson true prior density} belongs to $\mathrm{SubE}(s) := \{G : G([t, \infty)) \le 2 e^{-t/s}, \forall t > 0\}$ for some $s=s(L,\kappa_*)>0$. This is because, for $\theta \sim g_\trueprior$ (i.e., $\theta \mid \lambda \sim \mathrm{Gamma}(\kappa_*, \lambda)$ and $\lambda \sim \trueprior$), it holds that
\begin{align*}
    \P_{\trueprior}(\theta \ge t) &\le e^{-\gamma t} \E_{\trueprior}[e^{\gamma \theta}] = e^{-\gamma t} \E_{\trueprior}[\E[e^{\gamma \theta } \mid \lambda]] \\
    &= e^{-\gamma t} \E_\trueprior\left[\left(\frac{\lambda}{\lambda - \gamma} \right)^{\kappa_*}\right] \le e^{-\gamma t }\left(\frac{L}{L-\gamma} \right)^{\kappa_*}
\end{align*}
for all $\gamma  \in (0, L)$. Then $\gamma = L(1-2^{-1/\kappa_*})$ yields $\P_{\trueprior}(\theta \ge t) \le 2e^{-\gamma t }$ and we can set $s = \gamma^{-1}$ for $\mathrm{SubE}(s)$. Now, we choose 
\begin{align}\label{eq:param-i}
    h = 4s \log n,\quad K = \max \left\{1, \frac{2}{\log\left(1+\frac{1}{2s} \right)} \right\}\log n, \quad M = 1 \vee 48s^4.
\end{align}
Since $G^* \in \mathrm{SubE}(s)$, we have
\begin{align*}
    \E_{\trueprior}[\theta^4] = 4\int_0^\infty t^3 \P_{\trueprior}(\theta \ge t)\,\diff t \le 8\int_0^\infty t^3 e^{-t/s}\,\diff t \le 48s^4 \le M.
\end{align*}
Moreover, Lemma D.3 of \citet{Jana2025} gives
\begin{align}\label{eq:param-ii}
    G^*((h,\infty)) \le \frac{2}{n^4}, \quad \rho_K(G^*) \le \frac{3}{n^2}.
\end{align}
Next, note that, on the event $\{X_{(n)}>0\}$, $f_{\npmle}$ is fully supported on $\mathbb Z_+$. Hence, $\hat\theta_{\npmle}(X)$ is well defined. Moreover, under \eqref{eq:poisson hierarchical model}, for any finite $\lambda$,
\begin{align*}
    \theta_i \mid X_i = x, \lambda_i = \lambda \sim \mathrm{Gamma}(x + \kappa_*, \lambda+1)
\end{align*}
and we have
\begin{align*}
    \E[\theta_i \mid X_i = x, \lambda_i = \lambda ] = \frac{x +\kappa_*}{\lambda +1} \le x + \kappa_*.
\end{align*}
Since this bound holds for every finite $\lambda$, the same bound holds after averaging over the posterior distribution of $\lambda_i$ given $X_i=x$. Hence, it follows that
\begin{align*}
    \hat\theta_{\npmle}(X) = (X+1)\frac{f_{\npmle}(X+1)}{f_{\npmle}(X)} \le X + \kappa_*.
\end{align*}
When $X_{(n)}=\max_i X_i=0$ so that $\npmle=\delta_{\infty}$, we define $\hat\theta_{\npmle}(x)=0$ for all $x \in \mathbb Z_+$ and can handle this case separately. By Lemma~\ref{lem:poisson regret bound unbounded estimator} below with \eqref{eq:param-i} and \eqref{eq:param-ii}, it holds that
\begin{align*}
    \E_{\trueprior}\left[\left(\hat\theta_{\trueprior}(X) - \hat\theta_{\npmle}(X)\right)^2\right] &\lesssim_{\kappa_*,L} (\log n)^2\left(\E_{\trueprior}\left[\Hellinger^2(f_{\trueprior}, f_{\npmle}) \right] + \frac{1}{n^4}\right) + \frac{(\log n)^2}{n} + \frac{1}{n^2} \\
    &\lesssim_{\kappa_*,L} \frac{(\log n)^3}{n}
\end{align*}
where in the last inequality we used \eqref{eq:poisson density estimation} in Theorem~\ref{thm:poisson marginal density estimation}. This completes the proof.
\end{proof}

\begin{lemma}\label{lem:poisson regret bound unbounded estimator}
    Suppose that $X \mid \theta \sim \mathrm{Poi}(\theta)$ and $\theta \sim G \in \Pc([0,\infty) )$. Let $G$ be a distribution such that $\E_{G}[\theta^4] \le M$ for some constant $M \ge 1$. Let $\hat{G}$ be any (possibly unbounded) estimator of $G$ that is independent of $(\theta,X)$. Let $\hat\theta_{\tilde{G}}(x) = \frac{(x+1)f_{\tilde{G}}(x+1)}{f_{\tilde{G}}(x)}$ for any distribution $\tilde{G} \in \Pc([0,\infty) )$ where $f_{\tilde{G}}$ denotes the marginal pmf of $X$ induced by $\tilde{G}$. Suppose that, for some $\kappa > 0$, we have $\hat\theta_{\hat{G}}(x) \le x + \kappa$ for all $x \in \mathbb Z_+$ almost surely. Then for any $h > 0$ with $G([0,h]) > 1/2$ and any $K \ge 1$, 
    \begin{align*}
        &\E_{G}\left[\left(\hat\theta_G(X) - \hat\theta_{\hat{G}}(X)\right)^2\right]\\
        &\le (12(h^2 + (K+\kappa)^2) + 48(h+K+\kappa)K)\left(\E_{G}\left[\Hellinger^2(f_{G}, f_{\hat{G}})\right]+4G((h,\infty)) \right)\\
        &\quad + 2\sqrt{2}\left((2h+\kappa)^2 +\sqrt{h + 3h^2}\right) \sqrt{\rho_K(G)}
        + 2(1+6\sqrt{15})\sqrt{(\kappa^4 + M)G((h,\infty))}
    \end{align*}
    where $\rho_K(G) = \P_{G}(X \ge K)$.
\end{lemma}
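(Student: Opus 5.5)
We follow the proof of Theorem 3 in \citet{Jana2025}: split the regret over the region $\{X<K\}$, where the posterior means are controlled by Hellinger distance, and $\{X\ge K\}$, where we use tails. For any $x$ write
\begin{align*}
\hat\theta_G(x)-\hat\theta_{\hat G}(x) = \frac{(x+1)f_G(x+1) - \hat\theta_{\hat G}(x) f_G(x)}{f_G(x)}.
\end{align*}
The key identity is $(x+1)f_{\hat G}(x+1)-\hat\theta_{\hat G}(x)f_{\hat G}(x)=0$. Subtracting it from the numerator gives
\begin{align*}
(x+1)\bigl(f_G(x+1)-f_{\hat G}(x+1)\bigr) - \hat\theta_{\hat G}(x)\bigl(f_G(x)-f_{\hat G}(x)\bigr).
\end{align*}
Write $f_G-f_{\hat G}=(\sqrt{f_G}-\sqrt{f_{\hat G}})(\sqrt{f_G}+\sqrt{f_{\hat G}})$. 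Summing the squared difference against $f_G(x)$ over $x<K$ and applying Cauchy--Schwarz gives a bound of order $(K+\kappa)^2\,\Hellinger^2(f_G,f_{\hat G})$. This uses the a.s.\ bound $\hat\theta_{\hat G}(x)\le x+\kappa$, together with $(x+1)\le K$.

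The ratio $(x+1)f_G(x+1)/f_G(x)$ and the $\sqrt{f_{\hat G}}$ factors are not directly controlled. For these we need $\hat\theta_G(x)$ to be bounded. To get this, introduce the truncated prior $G_h=G(\cdot\mid[0,h])$. Then $\hat\theta_{G_h}(x)\le h$ for all $x$. Also $\mathrm{TV}(G,G_h)\le 2G((h,\infty))$, using $G([0,h])>1/2$, so $\Hellinger^2(f_G,f_{G_h})\lesssim G((h,\infty))$.

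The decomposition is then
\begin{align*}
(\hat\theta_G-\hat\theta_{\hat G})^2\le 3(\hat\theta_G-\hat\theta_{G_h})^2+3(\hat\theta_{G_h}-\hat\theta_{\hat G})^2 .
\end{align*}
On $\{X<K\}$ the second term is handled by the Hellinger argument above. There, the weight $f_G$ is replaced by $f_{G_h}$ at the cost of a TV term, and the boundedness of $\hat\theta_{G_h}\le h$ and $\hat\theta_{\hat G}\le K+\kappa$ controls cross terms. This follows Lemma 4 of Jana et al., and it produces the coefficient $12(h^2+(K+\kappa)^2)+48(h+K+\kappa)K$ multiplying $\E\Hellinger^2(f_G,f_{\hat G})+4G((h,\infty))$. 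Expectation over $\hat G$ is taken at the end, using independence of $\hat G$ and $(\theta,X)$.

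On $\{X\ge K\}$ we use Cauchy--Schwarz: $\E[(\hat\theta_{G_h}-\hat\theta_{\hat G})^2\1v(X\ge K)]\le \sqrt{\E[(\cdot)^4]}\sqrt{\rho_K(G)}$. Here $\hat\theta_{G_h}\le h$ and $\hat\theta_{\hat G}\le X+\kappa$. Under $f_G$ split into $\theta\le h$ and $\theta>h$; the moments of $X$ are controlled via Poisson moments (e.g.\ $\E[X^2\mid\theta]=\theta+\theta^2$), which gives the $(2h+\kappa)^2+\sqrt{h+3h^2}$ constant. For the first term $(\hat\theta_G-\hat\theta_{G_h})^2$, write $\hat\theta_G$ as $\E[\theta\mid X]$. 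The difference arises only from mass on $(h,\infty)$, so Cauchy--Schwarz with $\E\theta^4\le M$, $\E X^4\lesssim \kappa^4+M$ and probability $G((h,\infty))$ yields the $\sqrt{(\kappa^4+M)G((h,\infty))}$ term; the constant $6\sqrt{15}$ comes from Poisson fourth moments.

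The main obstacle is the Hellinger step on $\{X<K\}$. One must avoid dividing by small $f_{\hat G}(x)$, and this is exactly why the identity trick and the a.s.\ bound $\hat\theta_{\hat G}\le x+\kappa$ (replacing the bounded support assumed in Jana et al.) are essential. Tracking constants there is the tedious part.
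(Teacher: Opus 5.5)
Your proof works only up to absolute constants. It does not give the constants displayed in the lemma, and the Hellinger step as you sketch it needs a fix.

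\textbf{Different route, and what it costs.} You pass to the truncated prior $G_h$ by the pointwise split $\hat\theta_G-\hat\theta_{\hat G}=(\hat\theta_G-\hat\theta_{G_h})+(\hat\theta_{G_h}-\hat\theta_{\hat G})$. The paper does not split. It writes the regret as the excess Bayes risk $\E_G[(\hat\theta_{\hat G}(X)-\theta)^2\mid\hat G]-\E_G[(\hat\theta_G(X)-\theta)^2]$. It truncates the first risk to $\{\theta\le h\}$, with Cauchy--Schwarz remainder $\sqrt{\E_G[(\hat\theta_{\hat G}(X)-\theta)^4\mid\hat G]\,G((h,\infty))}$, and compares the Bayes risks of $G$ and $G_h$ via (C.2) of \citet{Jana2025}. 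This lands directly on $\E_{G_h}[(\hat\theta_{G_h}(X)-\hat\theta_{\hat G}(X))^2\mid\hat G]$ with coefficient one and weight $f_{G_h}$. The hypothesis $\hat\theta_{\hat G}\le X+\kappa$ enters through $|\hat\theta_{\hat G}(X)-\theta|\le|X-\theta|+2\theta+\kappa$, which is where $\kappa^4$ comes from.

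Your split is more elementary and sound, but you misplace the $\kappa^4$. The first piece does not involve $\hat G$ at all. Let $w(x):=\P_G(\theta>h\mid X=x)$. Then $\hat\theta_G-\hat\theta_{G_h}=w\,(\E_G[\theta\mid X,\theta>h]-\hat\theta_{G_h})$ with $0\le\hat\theta_{G_h}\le h$. Hence $\E_G[(\hat\theta_G-\hat\theta_{G_h})^2]\le\E_G[\theta^2\1v(\theta>h)]\le\sqrt{M\,G((h,\infty))}$, with no $\kappa$. In your route, $\kappa^4$ comes instead from the $\{\theta>h\}$ part of the second piece, where $\max(h,X+\kappa)\le\theta+X+\kappa$.

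The price is in the constants. Use the same ingredients: the bound of Jana et al.\ on $\{X<K\}$ and the triangle inequality for $\Hellinger$. Then the factor from $(a+b)^2\le 2a^2+2b^2$ doubles the coefficient of $\E_G[\Hellinger^2(f_G,f_{\hat G})]$, giving $2\times\bigl(12(h^2+(K+\kappa)^2)+48(h+K+\kappa)K\bigr)$; your factor $3$ makes it worse. That is enough for Theorem~\ref{thm:poisson mean estimation}, but not for the displayed inequality. Your claim that the decomposition produces the stated coefficient is incorrect.

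\textbf{The Hellinger step as sketched fails.} After you subtract your identity and divide by $f_G(x)$, the summands are of $\chi^2$ type. They contain $(\sqrt{f_G(y)}+\sqrt{f_{\hat G}(y)})^2/f_G(x)$ for $y\in\{x,x+1\}$, that is, the ratio $f_{\hat G}/f_G$. No bound on $\hat\theta_{G_h}$ or $\hat\theta_{\hat G}$ controls this ratio. The danger is small $f_G(x)$, not small $f_{\hat G}(x)$ as you wrote.

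The bound cited from \citet{Jana2025} must handle this mismatch, and one clean way is to symmetrize. Set $S:=f_{G_h}+f_{\hat G}\ge f_{G_h}$. Then
\[
\bigl(\hat\theta_{G_h}(x)-\hat\theta_{\hat G}(x)\bigr)S(x)=2(x+1)\bigl(f_{G_h}(x+1)-f_{\hat G}(x+1)\bigr)-\bigl(\hat\theta_{G_h}(x)+\hat\theta_{\hat G}(x)\bigr)\bigl(f_{G_h}(x)-f_{\hat G}(x)\bigr).
\]
Combine this with $(\sqrt p+\sqrt q)^2\le 2(p+q)$ and with $(x+1)S(x+1)/S(x)\le\max\{\hat\theta_{G_h}(x),\hat\theta_{\hat G}(x)\}\le h+K+\kappa$ for $x<K$. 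This gives a bound of order $\{(h+K+\kappa)^2+(h+K+\kappa)K\}\,\Hellinger^2(f_{G_h},f_{\hat G})$. This is where both $\hat\theta_{G_h}\le h$ and $\hat\theta_{\hat G}\le x+\kappa$ are genuinely used. A case split on whether $f_{\hat G}(x)\le 4f_{G_h}(x)$ also works.
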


\begin{proof}[Proof of Lemma~\ref{lem:poisson regret bound unbounded estimator}]
    The proof of this lemma closely follows that of Lemma 4 in \citet{Jana2025} but with one important difference: Lemma 4 in \citet{Jana2025} considers $\hat{G}$ to be supported on a compact interval, but here we consider a possibly unbounded $\hat{G}$ while assuming $\hat\theta_{\hat{G}}(x) \le x+\kappa$. First, write $G_h(A) = G(A \cap [0,h]) / G([0,h])$ for any measurable set $A \subseteq \Real$. Then
    \begin{align*}
        &\E_{G}\left[(\hat\theta_{\hat{G}}(X) - \theta)^2 \big| \hat{G} \right] \le \E_{G}\left[(\hat\theta_{\hat{G}}(X) - \theta)^2 \1v(\theta \le h)\big| \hat{G} \right] + \E_{G}\left[(\hat\theta_{\hat{G}}(X) - \theta)^2 \1v(\theta > h)\big| \hat{G} \right]  \\
        &\le \E_{G_h}\left[(\hat\theta_{\hat{G}}(X) - \theta)^2 \big| \hat{G} \right] + \sqrt{\E_{G}\left[(\hat\theta_{\hat{G}}(X) - \theta)^4  \big| \hat{G}\right] G((h,\infty)) }.
    \end{align*}
    Since $0 \le \hat\theta_{\hat{G}}(X) \le X+\kappa$, we have $|\hat\theta_{\hat{G}}(X) - \theta| \le X + \kappa + \theta \le |X-\theta| + 2\theta + \kappa$. Hence, using $(x+y+z)^4 \le 27(x^4+y^4+z^4)$,
    \begin{align*}
        \E_{G}\left[(\hat\theta_{\hat{G}}(X) - \theta)^4 \big| \hat{G}\right]
        &\le 27\left(\E_{G}\left[(X-\theta)^4\right] + 16\E_{G}\left[\theta^4\right] + \kappa^4\right) \le 27(20M+\kappa^4),
    \end{align*}
    where we used $\E_{G}[(X-\theta)^4] = \E_{G}\big[\E[(X-\theta)^4 \mid \theta]\big] = \E_{G}[\theta] + 3\E_{G}[\theta^2] \le 4M$ since $M \ge 1$. Therefore, we have
    \begin{align*}
        \E_{G}\left[(\hat\theta_{\hat{G}}(X) - \theta)^2 \big| \hat{G} \right]
        \le \E_{G_h}\left[(\hat\theta_{\hat{G}}(X) - \theta)^2 \big| \hat{G} \right] + \sqrt{27(20M+\kappa^4) G((h,\infty)) }.
    \end{align*}
    Using this bound together with equation (C.2) in \citet{Jana2025}, we obtain
    \begin{align*}
        \E_{G}\left[\left(\hat\theta_G(X) - \hat\theta_{\hat{G}}(X)\right)^2 \Big| \hat{G} \right]
        &\le \E_{G_h}\left[ \left(\hat\theta_{G_h}(X) - \hat\theta_{\hat{G}}(X)\right)^2 \Big| \hat{G} \right] \\
        &+ \frac{(1+6\sqrt{15})\sqrt{(\kappa^4 + M)G((h,\infty))}}{G([0,h])}
    \end{align*}
    where we used $\sqrt{27(20M+\kappa^4)} \le 6\sqrt{15}\sqrt{M+\kappa^4}$.
    Fix $K \ge 1$. Using $\hat\theta_{G_h}(x) \le h$ and $\hat\theta_{\hat{G}}(x) \le x + \kappa$, we have
    \begin{align*}
        &\E_{G_h}\left[\left(\hat\theta_{G_h}(X) - \hat\theta_{\hat{G}}(X)\right)^2 \1v(X \le K-1)\Big| \hat{G} \right] \\
        &= \sum_{x=0}^{K-1}(x+1)^2f_{G_h}(x)\left(\frac{f_{G_h}(x+1)}{f_{G_h}(x)} - \frac{f_{\hat{G}}(x+1)}{f_{\hat{G}}(x)}\right)^2 \\
        &\le (6(h^2 + (K+\kappa)^2) + 24(h+K+\kappa)K)\Hellinger^2(f_{G_h}, f_{\hat{G}})
    \end{align*}
    following the arguments on page 35 of \citet{Jana2025}. Moreover, again using $\hat\theta_{G_h}(x) \le h$ and $\hat\theta_{\hat{G}}(x) \le x+ \kappa$, we have
    \begin{align*}
        &\E_{G_h}\left[\left(\hat\theta_{G_h}(X) - \hat\theta_{\hat{G}}(X)\right)^2 \1v(X \ge K)\Big| \hat{G} \right] \le \E_{G_h}[(h+X+\kappa)^2\1v(X \ge K)] \\
        &\le 2(2h+\kappa)^2 \rho_K(G_h) + 2\E_{G_h}[(X-\theta)^2\1v(X\ge K)] \\
        &\le 2(2h+\kappa)^2 \rho_K(G_h) +2\sqrt{\E_{G_h}\left[(X - \theta)^4\right] \rho_K(G_h)} \\
        &\le 2\left((2h+\kappa)^2  +\sqrt{h + 3h^2}\right)\sqrt{\rho_K(G_h)}
    \end{align*}
    where the last inequality holds since $\E_{G_h}[(X-\theta)^4] = \E_{G_h}[\E[(X-\theta)^4 \mid \theta]] = \E_{G_h}[\theta] + 3\E_{G_h}[\theta^2] \le h + 3h^2$ and $\rho_K(G_h) \le 1$.
    Combining all of the above, we obtain
    \begin{align*}
        \E_{G}\left[\left(\hat\theta_G(X) - \hat\theta_{\hat{G}}(X)\right)^2 \Big| \hat{G} \right]  &\le (6(h^2 + (K+\kappa)^2) + 24(h+K+\kappa)K)\Hellinger^2(f_{G_h}, f_{\hat{G}}) \\
        &+ 2\left((2h+\kappa)^2 +\sqrt{h + 3h^2}\right)\sqrt{\rho_K(G_h)} \\
        &+\frac{(1+6\sqrt{15})\sqrt{(\kappa^4 + M)G((h,\infty))}}{G([0,h])}.
    \end{align*}
    Now, note that $\Hellinger^2(f_{G_h}, f_{\hat{G}}) \le 2(\Hellinger^2(f_{G_h}, f_{G}) + \Hellinger^2(f_{G}, f_{\hat{G}}))$ and
    \begin{align*}
        \Hellinger^2(f_{G_h}, f_{G}) \le 2 \mathrm{TV}(f_{G_h}, f_{G}) \le 2 \mathrm{TV}(G_{h}, G) = 2G((h,\infty)) \le 4G((h,\infty)).
    \end{align*}
    Hence, using $\rho_K(G_h) \le \frac{\rho_K(G)}{G([0,h])}$ and $G([0,h])>1/2$, we get
    \begin{align*}
        &\E_{G}\left[\left(\hat\theta_G(X) - \hat\theta_{\hat{G}}(X)\right)^2 \right]  \\
        &\le (12(h^2 + (K+\kappa)^2) + 48(h+K+\kappa)K)\left(\E_{G}\left[\Hellinger^2(f_{G}, f_{\hat{G}})\right]+4G((h,\infty)) \right)\\
        &\quad + 2\left((2h+\kappa)^2 +\sqrt{h + 3h^2}\right) \sqrt{\frac{\rho_K(G)}{G([0,h])}}
        + \frac{(1+6\sqrt{15})\sqrt{(\kappa^4 + M)G((h,\infty))}}{G([0,h])} \\
        &\le (12(h^2 + (K+\kappa)^2) + 48(h+K+\kappa)K)\left(\E_{G}\left[\Hellinger^2(f_{G}, f_{\hat{G}})\right]+4G((h,\infty)) \right)\\
        &\quad+ 2\sqrt{2}\left((2h+\kappa)^2 +\sqrt{h + 3h^2}\right) \sqrt{\rho_K(G)}
        + 2(1+6\sqrt{15})\sqrt{(\kappa^4 + M)G((h,\infty))}.
    \end{align*}
    This completes the proof.
\end{proof}

\subsection{Proof of Theorem~\ref{thm:poisson convergence of smooth NPMLE}}\label{app:proof of thm:poisson convergence of smooth NPMLE}
Throughout this subsection, we write $\kappa=\kappa_*$. In order to prove Theorem~\ref{thm:poisson convergence of smooth NPMLE}, it would be easier to reparameterize the model as follows. For $H\in \Pc([L,U])$, let $F_H$ be the pushforward of $H$ under $\lambda\mapsto z=(1+\lambda)^{-1}$. Then, we can write
\begin{align}\label{eq:reparametrized f}
    f_H(x)=\int_I \phi_x(z)\,\diff F_H(z), \qquad \phi_x(z):=\frac{\Gamma(x+\kappa)}{x!\Gamma(\kappa)}z^x(1-z)^\kappa
\end{align}
where $I := [a, b]$ with $a := (1+U)^{-1}$ and $b := (1+L)^{-1}$. Moreover, we can write
\begin{align}\label{eq:reparametrized g}
    g_H(\theta)=\int_I \Phi_\theta(z)\,\diff F_H(z),
    \qquad
    \Phi_\theta(z):=\frac{\theta^{\kappa-1}}{\Gamma(\kappa)}
    \left(\frac{1-z}{z}\right)^\kappa e^{-\theta(1-z)/z}.
\end{align}
A high-level proof sketch is given in Appendix~\ref{app:illustration-smooth-NPMLE}; we now turn to the full argument.

\begin{proof}[Proof of Theorem~\ref{thm:poisson convergence of smooth NPMLE}]
By the dual characterization of total variation, we can write
\begin{align*}
    \mathrm{TV}(g_{\npmle},g_{\trueprior}) = \frac{1}{2} \sup_{\|\psi\|_\infty\le 1} \left| \int_0^\infty \psi(\theta)\bigl(g_{\npmle}(\theta)-g_{\trueprior}(\theta)\bigr)\,\diff\theta \right|.
\end{align*}
Fix a bounded measurable function $\psi : (0,\infty) \to \Real$ with $\|\psi\|_\infty\le 1$. Then it holds that
\begin{align*}
    \int_0^\infty \psi(\theta)\bigl(g_{\npmle}(\theta)-g_{\trueprior}(\theta)\bigr)\,\diff\theta = \int_I T_\psi(z)\,\diff F_{\npmle}(z) -\int_I T_\psi(z)\, \diff F_{\trueprior}(z)
\end{align*}
where
\begin{align}\label{eq:T_psi}
    T_{\psi}(z) =\int_0^\infty \psi(\theta)\Phi_\theta(z)\,\diff\theta.
\end{align}
with $\Phi_{\theta}(z)$ defined in \eqref{eq:reparametrized g}. Here, we can exchange the order of integration by Fubini's theorem since $|\psi(\theta)| \le 1$ and $\Phi_\theta(z)\ge 0$ with $\int_0^\infty \Phi_\theta(z)\,\diff\theta = 1$ for every $z\in I$. 

We proceed by approximating the linear functional $T_\psi$ uniformly over $\|\psi\|_\infty\le 1$ by a finite linear combination of the basis functions $\{\phi_x\}_{x=0}^m$. By Lemma~\ref{lem:laguerre-approximation} below, for every integer $m\ge 2$, there exist coefficients $a_{\psi,0}^{(m)},\dots,a_{\psi,m}^{(m)} \in \Real$ such that
\begin{align}\label{eq:part-i}
    \sup_{z\in I} \left| T_\psi(z)-\sum_{x=0}^m a_{\psi,x}^{(m)}\phi_x(z) \right| \le C\rho^m
\end{align}
where $\phi_x(z)$ is defined in \eqref{eq:reparametrized f} and
\begin{align}\label{eq:part-ii}
    \sum_{x=0}^m |a_{\psi,x}^{(m)}| \le C(m+1)^{(1-\kappa)_+}B^m
\end{align}
for some constant $C>0$ depending only on $(\kappa,L,U)$, and some $\rho \in (0,1)$ and $B > 1$ depending only on $(L,U)$.  Therefore, it holds that
\begin{align*}
    &\left|\int_I T_\psi(z)\,\diff F_{\npmle}(z) -\int_I T_\psi(z)\, \diff F_{\trueprior}(z)\right| \\
    &\overset{(a)}{\le} 2C\rho^m + \sum_{x=0}^m |a_{\psi,x}^{(m)}| \left|\int_I \phi_x(z)\,\diff F_{\npmle}(z) - \int_{I} \phi_{x}(z) \diff F_{\trueprior}(z)\right|\\
    &\overset{(b)}= 2C\rho^m + \sum_{x=0}^m |a_{\psi,x}^{(m)}|\,|f_{\npmle}(x)-f_{\trueprior}(x)|\\
    &\overset{(c)}{\le} 2C\rho^m + C(m+1)^{(1-\kappa)_+}B^m
    \|f_{\npmle}-f_{\trueprior}\|_{1}.
\end{align*}
Here, in (a), we used \eqref{eq:part-i}. In (b), we used the identity in \eqref{eq:reparametrized f}. Also, in (c), we used \eqref{eq:part-ii}. Taking the supremum over $\|\psi\|_\infty\le 1$ gives
\begin{align}\label{eq:TV-bound}
    \mathrm{TV}(g_{\npmle},g_{\trueprior}) \le C\rho^m + C(m+1)^{(1-\kappa)_+}B^m \|f_{\npmle}-f_{\trueprior}\|_{1}.
\end{align}
Since $\|p-q \|_{1} \le 2\sqrt{2}\Hellinger(p,q)$, we have
\begin{align*}
    \E_{\trueprior}[\|f_{\npmle} - f_{\trueprior} \|_{1}] \le 2\sqrt{2}\left(\E_{\trueprior}[\Hellinger^2(f_{\npmle}, f_{\trueprior}) ]\right)^{1/2} \lesssim_{\kappa, L} \left( \frac{\log n}{n}\right)^{1/2} 
\end{align*}
by \eqref{eq:poisson density estimation} in Theorem~\ref{thm:poisson marginal density estimation}, whose proof applies unchanged when the optimization domain $\Pc((0,\infty] )$ is replaced by any subclass containing $\trueprior$; in particular, under the present assumptions, it applies to $\Pc([L,U])$. 

We now balance the approximation term $\rho^m$ and the estimation term $B^m(\log n/n)^{1/2}$ by an appropriate choice of $m=m_n$. We choose
\begin{align*}
    t_n:=\frac{\log\sqrt{n/\log n}}{\log B+\log(1/\rho)}, \qquad m_n:=\lfloor t_n\rfloor\vee 1.
\end{align*}
For all sufficiently large $n$, we have $m_n\ge 2$. Since $m_n\ge t_n-1$ and $\rho\in(0,1)$, we have
\begin{align*}
    \rho^{m_n} \le \rho^{t_n-1} = \rho^{-1}\left(\frac{\log n}{n}\right)^{\alpha_*}
\end{align*}
where we define
\begin{align}\label{eq:alphastar}
    \alpha_*&:=\frac{\log(1/\rho)}{2\{\log B+\log(1/\rho)\}}\in(0,1/2)
\end{align}
with $\rho \in (0,1)$ and $B > 1$ depending only on $( L, U)$. Also, since $m_n\le t_n$ and $B>1$, we have
\begin{align*}
    B^{m_n}\left(\frac{\log n}{n}\right)^{1/2} \le B^{t_n}\left(\frac{\log n}{n}\right)^{1/2} = \left(\frac{\log n}{n}\right)^{\alpha_*}.
\end{align*}
Finally, we have  $m_n+1\lesssim \log n$. Combining the above results with \eqref{eq:TV-bound}, we have
\begin{align*}
    \E_{\trueprior}\bigl[\mathrm{TV}(g_{\npmle},g_{\trueprior})\bigr] \lesssim_{\kappa, L, U} n^{-\alpha_*}(\log n)^{(1-\kappa)_+ + \alpha_*}.
\end{align*}
This completes the proof.
\end{proof}

\begin{lemma}\label{lem:laguerre-approximation}
Let $\Ac = \{\psi:[0,\infty)\to\mathbb R:\psi \text{ is measurable and }\|\psi\|_\infty\le 1\}$. For $x\in\mathbb Z_+$ and $z\in I$, let $\phi_x(z)$ be defined as in \eqref{eq:reparametrized f}. For $\psi\in\Ac$ and $z\in I$, let $T_\psi(z)$ be defined as in \eqref{eq:T_psi}. Then there exists a constant $C>0$ depending only on $(\kappa,L,U)$ and constants $\rho\in(0,1)$ and $B>1$ depending only on $(L,U)$ such that, for every integer $m\ge2$ and every $\psi\in\Ac$, there exist coefficients $a_{\psi,0}^{(m)},\dots,a_{\psi,m}^{(m)}$ satisfying
\begin{align}\label{eq:approximation error}
    \sup_{z\in I}\left|T_\psi(z)-\sum_{x=0}^m a_{\psi,x}^{(m)}\phi_x(z)\right| \le C\rho^m
\end{align}
and
\begin{align}\label{eq:coefficient bound}
    \sum_{x=0}^m |a_{\psi,x}^{(m)}| \le C(m+1)^{(1-\kappa)_+}B^m.
\end{align}
\end{lemma}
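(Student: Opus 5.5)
The plan is to strip the common factor $(1-z)^{\kappa}$ from the basis functions, so that the problem becomes uniform polynomial approximation of an analytic function on the compact interval $I=[a,b]\subset(0,1)$.

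\textbf{Step 1: reduction to polynomials.} Write $c_x:=\Gamma(x+\kappa)/(x!\Gamma(\kappa))$, so that $\phi_x(z)=c_x z^x(1-z)^\kappa$. Put $\lambda(z)=(1-z)/z$. Since $z^{-\kappa}=(1+\lambda)^{\kappa}$ and $(1+\lambda)^{\kappa}\lambda^{-\kappa}=(1-z)^{-\kappa}$, we get
\begin{align*}
T_\psi(z)=(1-z)^\kappa q_\psi(z),\qquad q_\psi(z):=\frac{z^{-\kappa}}{\Gamma(\kappa)}\int_0^\infty \psi(\theta)\theta^{\kappa-1}e^{-\theta(1-z)/z}\,\diff\theta .
\end{align*}
Suppose we find a polynomial $P(z)=\sum_{x=0}^m b_x z^x$ with $\sup_I|q_\psi-P|\le C\rho^m$. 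Setting $a^{(m)}_{\psi,x}:=b_x/c_x$ then gives \eqref{eq:approximation error}, because $0\le(1-z)^\kappa\le1$. It also gives
\begin{align*}
\sum_{x=0}^m|a^{(m)}_{\psi,x}|\le \max_{x\le m}c_x^{-1}\sum_{x=0}^m|b_x|.
\end{align*}
By Stirling, $c_x\asymp_\kappa (x+1)^{\kappa-1}$, so $\max_{x\le m}c_x^{-1}\lesssim_\kappa (m+1)^{(1-\kappa)_+}$. This is exactly the polynomial prefactor in \eqref{eq:coefficient bound}. It therefore remains to show $\sum_x|b_x|\le CB^m$, with $B$ depending only on $(L,U)$.

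\textbf{Step 2: analyticity uniform in $\psi$.} Let $D:=\{z\in\mathbb C:|z-1/2|<1/2\}$, which is exactly the set where $\mathrm{Re}(1/z)>1$, i.e.\ where $\mathrm{Re}\,\lambda(z)>0$. For $z\in D$,
\begin{align*}
\Bigl|\int_0^\infty\psi(\theta)\theta^{\kappa-1}e^{-\theta\lambda(z)}\,\diff\theta\Bigr|\le\Gamma(\kappa)\,(\mathrm{Re}\,\lambda(z))^{-\kappa}.
\end{align*}
Dominated convergence together with Morera's theorem then shows that $q_\psi$ is holomorphic on $D$, for every bounded measurable $\psi$; only $\|\psi\|_\infty\le1$ is used. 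Now choose a Bernstein ellipse $E_\varrho$ with foci $a,b$ whose closure lies in $D$; its parameter $\varrho>1$ depends only on $(a,b)$, i.e.\ on $(L,U)$. On $\overline{E_\varrho}$ we have $\mathrm{Re}\,\lambda\ge c_0>0$ and $|z|\ge c_1>0$, so $\sup_{\psi\in\Ac}\sup_{\overline{E_\varrho}}|q_\psi|\le C(\kappa,L,U)$.

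\textbf{Step 3: Chebyshev truncation and coefficient control.} Expand $q_\psi$ in the shifted Chebyshev polynomials $\tilde T_k(z)=T_k((2z-a-b)/(b-a))$. Bernstein's theorem gives Chebyshev coefficients $|\gamma_k|\le 2C\varrho^{-k}$, so the degree-$m$ truncation has sup error on $I$ at most $C'\varrho^{-m}$. This yields \eqref{eq:approximation error} with $\rho=\varrho^{-1}$. For the monomial coefficients, $T_k$ has $\ell^1$ coefficient norm at most $(1+\sqrt2)^k$. Composing with the affine map $z\mapsto(2z-a-b)/(b-a)$ multiplies this by at most $\bigl((2+a+b)/(b-a)\bigr)^k$. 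Hence
\begin{align*}
\sum_x|b_x|\le \sum_{k\le m}|\gamma_k|\,B_0^k\le C''B^m
\end{align*}
with $B=\max\{B_0/\varrho,\,1+\epsilon\}$. This is the kind of coefficient bound cited from \citet{Han2023}. Combined with Step 1, it gives \eqref{eq:coefficient bound}.

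\textbf{Main obstacle.} The delicate part is Step 2: I need a single complex neighborhood of $I$ on which $q_\psi$ is bounded uniformly over all merely measurable $\psi$, with constants depending only on $(\kappa,L,U)$. The hypothesis $L>0$ keeps $b<1$, which is needed because $\mathrm{Re}\,\lambda\to0$ as $z\to1$. The hypothesis $U<\infty$ keeps $a>0$, away from the singularity of $z^{-\kappa}$ at $0$. If the ellipse route proves awkward, the fallback is the paper's hinted route: expand $\theta\mapsto\psi(\theta)$ against Laguerre polynomials, which turns $T_\psi$ into an explicit power series in $z$ whose terms can be bounded directly.
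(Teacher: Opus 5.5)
Your proposal is correct, and after the common first step it takes a genuinely different route from the paper.

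Your Step~1 is exactly the paper's conversion. You approximate $R_\psi=(1-z)^{-\kappa}T_\psi$ by a polynomial in $z$, use $0\le(1-z)^\kappa\le1$ for the error, and divide the monomial coefficients by $\Gamma(x+\kappa)/(x!\Gamma(\kappa))$ to produce the $(m+1)^{(1-\kappa)_+}$ factor.

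From there the paper stays on the real line:
\begin{itemize}
\item Lemma~\ref{lem:poisson-laguerre-derivative} differentiates under the integral via the Laguerre identity, giving $\sup_{[-1,1]}|q_\psi^{(r)}|\le MA^r\Gamma(r+\kappa)$.
\item Lemma~\ref{lem:qpsi-polynomial-approximation} feeds $r_m\approx\tau m$ of these derivatives into a Jackson-type estimate (DeVore, via \citet{Han2023}) to get the geometric error $\rho^m$.
\item It then controls the monomial coefficients of the bounded approximant by Timan's inequality $|d_j|\le (m^j/j!)\sup_{[-1,1]}|p|$. After the affine change of variables this yields $B=\exp((z_0+1)/\Delta)$.
\end{itemize}

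You instead make the underlying analyticity explicit. $R_\psi$ is holomorphic on $\{|z-1/2|<1/2\}=\{\mathrm{Re}\,\lambda(z)>0\}$, with a bound on a fixed Bernstein ellipse around $I$ that is uniform over $\|\psi\|_\infty\le 1$. Bernstein's theorem then gives both conclusions from the single decay estimate $|\gamma_k|\le 2C\varrho^{-k}$: the truncation error directly, and the $\ell^1$ coefficient bound because the $\ell^1$ norm of the monomial coefficients of $T_k$ equals $|T_k(i)|\le(1+\sqrt2)^k$.

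The paper's derivative bound is essentially the real-variable (Cauchy-estimate) shadow of your complex bound. Both arguments use $L>0$ (so $b<1$) and $U<\infty$ (so $a>0$) in the same way.

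What your route buys:
\begin{itemize}
\item It is self-contained: no Laguerre identity, no Jackson/Timan citations, and no small-$m$ bookkeeping for $r_m$.
\item It gives sharper constants. Your $B$ grows only polynomially in $1/\Delta$, whereas the paper's grows exponentially. Plugging your $\rho,B$ into the balancing step of Theorem~\ref{thm:poisson convergence of smooth NPMLE} would typically give a considerably larger $\alpha_*$.
\end{itemize}

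What the paper's route buys is fully explicit closed forms for $\rho$ and $B$ in terms of $(L,U)$ (Remark~\ref{rem:alphastar}), plus direct reuse of existing machinery. Your $\varrho$ becomes explicit only once you compute the largest Bernstein ellipse with foci $a,b$ that fits in the disk.
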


\begin{proof}[Proof of Lemma~\ref{lem:laguerre-approximation}]

Define
\begin{align}\label{eq:z0 and Delta}
    z_0:=\frac{a+b}{2}, \qquad \Delta:=\frac{b-a}{2}
\end{align}
where $a < b$ are the endpoints of the interval $I$, which contains the support of the pushforward measure $F_H$ (see \eqref{eq:reparametrized f} and \eqref{eq:reparametrized g}). By \eqref{eq:rho} and \eqref{eq:d coefficient bound} in Lemma~\ref{lem:qpsi-polynomial-approximation} below, for every integer $m\ge 2$ and every $\psi\in\Ac$, there exists a polynomial of degree at most $m$
\begin{align*}
    p_{m,\psi}(u)=\sum_{j=0}^m d_{\psi,j}^{(m)}u^j
\end{align*}
such that
\begin{align}\label{eq:R approximation bound}
    \sup_{u \in [-1,1]}\abs{R_\psi(z_0+\Delta u)-p_{m,\psi}(u)} \lesssim_{\kappa, L, U} \rho^m
\end{align}
where $ R_\psi(z)=(1-z)^{-\kappa}T_\psi(z)$ with $T_{\psi}(z)$ defined in \eqref{eq:T_psi} and
\begin{align}\label{eq:d coefficient bound cite}
    |d_{\psi,j}^{(m)}| \lesssim_{\kappa, L, U} \frac{m^j}{j!}, \qquad j=0,\dots,m.
\end{align}
Here, $\rho \in (0,1)$ is a constant depending only on $(L,U)$. 

We now convert the above polynomial approximation of $R_\psi(z_0+\Delta u)$ into the $\{\phi_x\}$ basis. Define
\begin{align*}
    P_{m,\psi}(z):=p_{m,\psi}\left(\frac{z-z_0}{\Delta}\right), \qquad z\in I
\end{align*}
where $z_0$ and $\Delta$ are defined in \eqref{eq:z0 and Delta}. Note that $P_{m,\psi}$ is a polynomial in $z$ of degree at most $m$, thus we can write
\begin{align}\label{eq:P polynomial}
    P_{m,\psi}(z)=\sum_{x=0}^m c_{\psi,x}^{(m)}z^x
\end{align}
where, by the binomial theorem,
\begin{align*}
    c_{\psi,x}^{(m)} = \sum_{j=x}^m d_{\psi,j}^{(m)}\Delta^{-j}\binom{j}{x}(-z_0)^{j-x}.
\end{align*}
Hence, we can bound the coefficients $c_{\psi,x}^{(m)}$ using the coefficient bounds \eqref{eq:d coefficient bound cite}:
\begin{align}\label{eq:c_psi coefficient bound}
    |c_{\psi,x}^{(m)}| \lesssim_{\kappa, L, U}
    \sum_{j=x}^m \frac{m^j}{j!}\Delta^{-j}\binom{j}{x}z_0^{j-x} =
    \frac{m^x}{x!\Delta^x}
    \sum_{j=x}^m \frac{(m z_0/\Delta)^{j-x}}{(j-x)!} \le e^{m z_0/\Delta}\frac{m^x}{x!\Delta^x}.
\end{align}

We will show \eqref{eq:approximation error} and \eqref{eq:coefficient bound} based on \eqref{eq:P polynomial} and \eqref{eq:c_psi coefficient bound}. For this, we will convert the monomial expansion of $P_{m,\psi}$ into the basis $\{\phi_x\}$ by the following rescaling:
\begin{align*}
    a_{\psi,x}^{(m)} := c_{\psi,x}^{(m)}\frac{x!\Gamma(\kappa)}{\Gamma(x+\kappa)}, \qquad x=0,\dots,m.
\end{align*}
Then
\begin{align}\label{eq:P polynomial expression}
    (1-z)^\kappa P_{m,\psi}(z)=\sum_{x=0}^m a_{\psi,x}^{(m)}\phi_x(z).
\end{align}
Therefore,
\begin{align*}
    \sum_{x=0}^m |a_{\psi,x}^{(m)}| &\lesssim_{\kappa, L, U}  e^{m z_0/\Delta} \sum_{x=0}^m \frac{m^x}{x!\Delta^x}\frac{x!\Gamma(\kappa)}{\Gamma(x+\kappa)}\\
    &\overset{(a)}{\lesssim}_{\kappa, L, U} e^{m z_0/\Delta} \sum_{x=0}^m \frac{m^x}{x!\Delta^x}(x+1)^{(1-\kappa)_+}\\
    &\le (m+1)^{(1-\kappa)_+}e^{m z_0/\Delta}
    \sum_{x=0}^m \frac{(m/\Delta)^x}{x!}\\
    &\le (m+1)^{(1-\kappa)_+}\exp\left(\frac{m(z_0+1)}{\Delta}\right)\\
    &\overset{(b)}{=} (m+1)^{(1-\kappa)_+}B^m.
\end{align*}
Here, in (a), we used the fact that there exists a constant $C_{\kappa}$ depending only on $\kappa$ such that
\begin{align*}
    \frac{x!\Gamma(\kappa)}{\Gamma(x+\kappa)} \le C_\kappa (x+1)^{(1-\kappa)_+}
\end{align*}
for any $x \in \mathbb{Z}_+$ and $\kappa >  0$. In (b), we set
\begin{align}\label{eq:beta}
    B&:=\exp\left(\frac{z_0+1}{\Delta}\right)>1.
\end{align}
This proves \eqref{eq:coefficient bound}. Finally, note that $T_\psi(z)=(1-z)^\kappa R_\psi(z)$ and $0\le (1-z)^\kappa\le 1$ on $I$. Using \eqref{eq:P polynomial expression}, we have
\begin{align*}
    \sup_{z\in I} \left| T_\psi(z)-\sum_{x=0}^m a_{\psi,x}^{(m)}\phi_x(z) \right|
    \le \sup_{z\in I}\abs{R_\psi(z)-P_{m,\psi}(z)} \lesssim_{\kappa, L, U}\rho^m
\end{align*}
by \eqref{eq:R approximation bound}. This proves \eqref{eq:approximation error}. 
\end{proof}

\begin{lemma}\label{lem:qpsi-polynomial-approximation}
Let $\Ac = \left\{\psi:[0,\infty)\to\mathbb R: \psi \text{ is measurable and } \|\psi\|_\infty\le 1\right\}$. For a measurable function $\psi\in\Ac$, let
\begin{align}\label{eq:q_psi}
    q_\psi(u):=R_\psi(z_0+\Delta u), \qquad u\in[-1,1]
\end{align}
where $R_\psi(z)=(1-z)^{-\kappa}T_\psi(z)$ with $T_\psi(z)$ defined in \eqref{eq:T_psi}. Then there exists a constant $C>0$ depending only on $(\kappa,L,U)$ and a constant $\rho\in(0,1)$ depending only on $(L,U)$ such that, for every integer $m\ge2$ and every $\psi\in\Ac$, there exists a polynomial of degree at most $m$
\begin{align*}
    p_{m,\psi}(u)=\sum_{j=0}^m d_{\psi,j}^{(m)}u^j
\end{align*}
satisfying
\begin{align}\label{eq:rho}
    \sup_{u\in[-1,1]}\abs{q_\psi(u)-p_{m,\psi}(u)} \le C\rho^m
\end{align}
and
\begin{align}\label{eq:d coefficient bound}
    |d_{\psi,j}^{(m)}| \le C\frac{m^j}{j!}, \qquad j=0,\dots,m.
\end{align}
\end{lemma}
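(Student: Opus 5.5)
The plan is to prove the lemma by showing that $q_\psi$ extends holomorphically to a complex disc centered at $u=0$ of radius strictly larger than $1$, with a sup bound uniform over $\psi\in\Ac$. The polynomial $p_{m,\psi}$ will then simply be the degree-$m$ Taylor polynomial of $q_\psi$ at $0$, and both \eqref{eq:rho} and \eqref{eq:d coefficient bound} will follow from Cauchy's estimates.

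\textbf{Step 1: explicit form of $R_\psi$.} Write $w:=(1-z)/z$, so that $w\in[L,U]$ when $z\in I$. From \eqref{eq:reparametrized g} and \eqref{eq:T_psi},
\begin{align*}
R_\psi(z)=(1-z)^{-\kappa}T_\psi(z)=\frac{z^{-\kappa}}{\Gamma(\kappa)}\int_0^\infty \psi(\theta)\,\theta^{\kappa-1}e^{-\theta(1-z)/z}\,\diff\theta .
\end{align*}
I then take this formula as the definition of $R_\psi$ for complex $z$ in the open disc $D:=\{z:|z-1/2|<1/2\}$, using the principal branch of $z^{-\kappa}$. On $D$ we have $\mathrm{Re}\,z>0$ and $\mathrm{Re}(1/z)>1$, hence $\mathrm{Re}\,w>0$.

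\textbf{Step 2: holomorphy and a uniform bound.} Because $|\psi|\le1$, the integrand is dominated by $\theta^{\kappa-1}e^{-\theta\,\mathrm{Re}\,w}$, and this domination is locally uniform on compact subsets of $D$. Differentiation under the integral sign, or Morera's theorem combined with Fubini, therefore shows that $R_\psi$ is holomorphic on $D$. The same domination gives
\begin{align*}
|R_\psi(z)|\le |z|^{-\kappa}\,(\mathrm{Re}\,w)^{-\kappa},
\end{align*}
and the right-hand side depends neither on $\psi$ nor on anything beyond $z$ and $\kappa$.

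\textbf{Step 3: choice of the disc.} The distance from $z_0$ to $\partial D$ is $\min(z_0,1-z_0)$. Since $z_0-\Delta=a>0$ and $z_0+\Delta=b<1$, this distance is strictly larger than $\Delta$. Set
\begin{align*}
R_0:=\tfrac12\bigl(1+\min(z_0,1-z_0)/\Delta\bigr)>1, \qquad \rho:=1/R_0\in(0,1).
\end{align*}
Both $R_0$ and $\rho$ depend only on $(L,U)$. The closed disc $\{z_0+\Delta u:|u|\le R_0\}$ is a compact subset of $D$. Hence $q_\psi(u)=R_\psi(z_0+\Delta u)$ is holomorphic on $|u|<R_0'$ for some $R_0'>R_0$, and by Step 2 it satisfies $\sup_{|u|\le R_0}|q_\psi(u)|\le M$, where $M$ is the maximum of $|z|^{-\kappa}(\mathrm{Re}\,w)^{-\kappa}$ over that compact disc. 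This $M$ depends only on $(\kappa,L,U)$.

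\textbf{Step 4: Cauchy estimates.} Take $d_{\psi,j}^{(m)}:=q_\psi^{(j)}(0)/j!$ for $j\le m$, i.e. the Taylor coefficients. Cauchy's estimate gives $|d_{\psi,j}^{(m)}|\le M R_0^{-j}\le M$. Since $m^j/j!\ge1$ for $0\le j\le m$, this yields \eqref{eq:d coefficient bound} with $C\ge M$. For the truncation error, on $[-1,1]$ we have
\begin{align*}
|q_\psi(u)-p_{m,\psi}(u)|\le\sum_{j>m}M R_0^{-j}=\frac{M}{R_0-1}\,\rho^{m},
\end{align*}
which gives \eqref{eq:rho}.

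I expect the only delicate part to be Step 2. Two points need care there: choosing the branch of $z^{-\kappa}$ consistently with the real definition on $I$ (this is fine because $\mathrm{Re}\,z>0$ on $D$), and justifying holomorphy under the integral with domination uniform in $\psi$. Both are standard once $\mathrm{Re}\,w$ is bounded below on the compact disc. The Taylor route avoids the exponential growth of monomial coefficients that a Chebyshev expansion would produce, so the stronger-than-needed bound $|d_{\psi,j}^{(m)}|\le M$ comes for free.
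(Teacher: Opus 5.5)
Your argument is correct, and it takes a genuinely different route from the paper's. The paper works entirely on the real line. It bounds derivatives via the generalized Laguerre identity, obtaining $\sup_{[-1,1]}|q_\psi^{(r)}|\le MA^r\Gamma(r+\kappa)$ (Lemma~\ref{lem:poisson-laguerre-derivative}). It feeds this into a Jackson-type theorem (Lemma 5.1 of \citet{Han2023}, from DeVore) with $r_m=\max\{1,\lfloor\tau m\rfloor\}$ derivatives, and uses Stirling's formula to reach the rate $2^{-\tau m}$. It then gets the coefficient bound from a Markov/Timan-type inequality, $|d_{\psi,j}^{(m)}|\le \frac{m^j}{j!}\sup_{[-1,1]}|p_{m,\psi}|$.

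You instead exploit the Laplace-transform structure. $R_\psi$ is holomorphic on $\{|z-1/2|<1/2\}=\{\mathrm{Re}(1/z)>1\}$, with a bound uniform over $\psi$. Since $0<a<b<1$, this disc contains a disc about $z_0$ of radius strictly larger than $\Delta$. Taylor truncation plus Cauchy's estimates then deliver both conclusions at once.

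Your route is self-contained: no Laguerre identity, no external approximation or coefficient theorems, and no special handling of small $m$. It also proves more. The value $\rho=1/R_0$ comes directly from the geometry rather than from $\tau\le\min\{1/4,e/(8A)\}$. The coefficients decay geometrically, $|d_{\psi,j}^{(m)}|\le MR_0^{-j}$, rather than merely obeying $Cm^j/j!$. Plugging this into the proof of Lemma~\ref{lem:laguerre-approximation} would give $\sum_x|c_{\psi,x}^{(m)}|\le\sum_j|d_{\psi,j}^{(m)}|\Delta^{-j}(1+z_0)^j\lesssim\{(1+z_0)/(\Delta R_0)\}^m$. That replaces $B=\exp\{(1+z_0)/\Delta\}$ by a much smaller base and hence improves $\alpha_*$.

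The paper's approach, in exchange, is modular: it reuses the machinery of \citet{Han2023} and needs only real derivative bounds. The cost is these weaker constants.
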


\begin{proof}[Proof of Lemma~\ref{lem:qpsi-polynomial-approximation}]
The proof combines a derivative bound for $q_\psi$ with a polynomial approximation theorem on $[-1,1]$. By Lemma~\ref{lem:poisson-laguerre-derivative} below, there exists a constant $M>0$ depending only on $(\kappa,L,U)$ and a constant $A>0$ depending only on $(L,U)$ such that
\begin{align}\label{eq:q derivative bound}
    \sup_{u \in[-1,1]}\abs{q_\psi^{(r)}(u)} \le M A^r\Gamma(r+\kappa), \qquad r=0,1,2,\cdots.
\end{align}
Now, define
\begin{align}\label{eq:r_m and tau}
    r_m:=\max\{1,\lfloor \tau m\rfloor\}, \qquad \tau := \min\left\{ \frac{1}{4}, \frac{e}{8A}\right\}.
\end{align}
Since $\tau\le 1/4$, we have $1\le r_m<m$. By \eqref{eq:q derivative bound}, $q_\psi$ has bounded derivatives up
to order $r_m+1$ on $[-1,1]$. Then, by Lemma 5.1 of \citet{Han2023} (which follows from Theorem 6.2 of \citet{DeVore1976}), there exists a polynomial $p_{m,\psi}$ of degree at most $m$ such that
\begin{align*}
    \sup_{u \in [-1,1]} \abs{q_\psi(u)-p_{m,\psi}(u)} \le C_J m^{-r_m}\omega(q_\psi^{(r_m)},m^{-1})
\end{align*}
where $C_J$ is a universal constant and
\begin{align*}
    \omega(f^{(r)}, m^{-1}) := \sup_{u_1, u_2: |u_1 - u_2| \le m^{-1}} \abs{f^{(r)}(u_1) - f^{(r)}(u_2)}.
\end{align*}
Moreover, since $\omega(g,t)\le t\|g'\|_{\infty}$ for absolutely continuous $g$, we have
\begin{align*}
    \sup_{u \in [-1,1]} \abs{q_\psi(u)-p_{m,\psi}(u)} &\le C_J m^{-r_m-1} \sup_{u \in [-1,1]} \abs{q_\psi^{(r_m+1)}(u) } \\
    &\le C_J M m^{-r_m-1}A^{r_m+1}\Gamma(r_m+\kappa+1)
\end{align*}
by \eqref{eq:q derivative bound}. Note that, by Stirling's formula, there exists a constant $C'>0$ depending only on $\kappa$ such that
\begin{align*}
    \Gamma(r+\kappa+1)\le C' (r+1)^{\kappa+1}\left(\frac{r}{e}\right)^r, \qquad r\ge 1.
\end{align*}
Since $r_m+1\le m$, it follows that
    \begin{align*}
        \sup_{u \in [-1, 1]}\abs{q_\psi(u)-p_{m,\psi}(u)}  \le C'' m^{\kappa}\left(\frac{A r_m}{e m}\right)^{r_m}
    \end{align*}
where $C''$ depends only on $(\kappa,L,U)$. For $m < 1/\tau$, we have $r_m=1$ and $r_m \le \tau m$ may fail. Since this occurs only for finitely many $m\ge 2$, it can be absorbed into the constant. Hence, it suffices to consider $m \ge 1/\tau$ for which $r_m \le \tau m$. Because $r_m\le \tau m$ and $\tau\le e/(8A)$ in \eqref{eq:r_m and tau}, we have
\begin{align*}
    \frac{A r_m}{e m}\le \frac{A\tau}{e}\le \frac{1}{8}
\end{align*}
which yields
\begin{align*}
    \sup_{u \in [-1,1]}\abs{q_\psi(u)-p_{m,\psi}(u)} \le C'' m^{\kappa}8^{-r_m}.
\end{align*}
Since $r_m\ge \tau m-1$, we have
\begin{align*}
    m^{\kappa}8^{-r_m}\le 8\,m^{\kappa}2^{-3\tau m}
    = 8(m^{\kappa}2^{-2\tau m})2^{-\tau m}.
\end{align*}
Moreover, since $m^{\kappa}2^{-2\tau m}$ is bounded uniformly in $m\ge 2$, after enlarging the constant, we obtain
\begin{align}
    \sup_{u \in [-1,1]}\abs{q_\psi(u)-p_{m,\psi}(u) } \le C''' 2^{-\tau m} = C''' \rho^m, \qquad \rho:=2^{-\tau}\in(0,1)
\end{align}
for a constant $C'''$ depending only on $(\kappa,L,U)$. This proves \eqref{eq:rho}. Here, $\rho \in (0,1)$ depends on $\tau$ in \eqref{eq:r_m and tau}, which in turn depends on $A$ depending only on $(L,U)$ (Lemma~\ref{lem:poisson-laguerre-derivative}).

Next, let us write
\begin{align*}
    p_{m,\psi}(u)=\sum_{j=0}^m d_{\psi,j}^{(m)}u^j.
\end{align*}
Note that $q_{\psi}$ is uniformly bounded from Lemma~\ref{lem:poisson-laguerre-derivative} with $r=0$. Also, $p_{m,\psi} - q_{\psi}$ is uniformly bounded by \eqref{eq:rho}. Thus, we have
\begin{align*}
    \sup_{u \in [-1,1] }\abs{p_{m,\psi}(u)}  \le  C_0
\end{align*}
for some constant $C_0$ depending only on $(\kappa,L,U)$.
Then, by Lemma 5.5 of \citet{Han2023} (which follows from \citet{Timan2014}), we have the coefficient bound
\begin{align*}
    |d_{\psi,j}^{(m)}| \le \frac{m^j}{j!} \sup_{u \in [-1,1]}\abs{p_{m,\psi}(u)} \le C_0\frac{m^j}{j!}, \qquad j=0,\dots,m.
\end{align*}
This proves \eqref{eq:d coefficient bound}.
\end{proof}

\begin{lemma}\label{lem:poisson-laguerre-derivative}
For any measurable function $\psi:[0,\infty)\to\mathbb R$ such that $\|\psi\|_\infty\le 1$, let
\begin{align*}
    q_\psi(u):=R_\psi(z_0+\Delta u), \qquad u\in[-1,1]
\end{align*}
where $ R_\psi(z)=(1-z)^{-\kappa}T_\psi(z)$ with $T_{\psi}(z)$ defined in \eqref{eq:T_psi} and $z_0$ and $\Delta$ are defined in \eqref{eq:z0 and Delta}. Then, for every integer $r\ge 0$, we have
\begin{align*}
    \sup_{u \in [-1,1]}\abs{q_\psi^{(r)}(u)} \le a^{-\kappa}L^{-\kappa}\, A^r\, \frac{\Gamma(r+\kappa)}{\Gamma(\kappa)}.
\end{align*}
where $a = (1+U)^{-1} > 0$ and $ A = \Delta a^{-1}(1 + 1/(aL)) > 0$ is a constant depending only on $( L, U)$. 
\end{lemma}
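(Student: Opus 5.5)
We will prove the bound by differentiating the kernel under the integral sign in the original variable $z$. The chain rule then gives
\begin{align*}
q_\psi^{(r)}(u)=\Delta^r R_\psi^{(r)}(z_0+\Delta u),
\end{align*}
so it suffices to show
\begin{align*}
\sup_{z\in I}|R_\psi^{(r)}(z)|\le a^{-\kappa}L^{-\kappa}\bigl(a^{-1}(1+1/(aL))\bigr)^r\,\Gamma(r+\kappa)/\Gamma(\kappa).
\end{align*}

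\textbf{Integral representation.} From \eqref{eq:reparametrized g} and \eqref{eq:T_psi},
\begin{align*}
R_\psi(z)=\frac{1}{\Gamma(\kappa)}\int_0^\infty \psi(\theta)\,\theta^{\kappa-1}\,K_\theta(z)\,\diff\theta,\qquad K_\theta(z):=z^{-\kappa}e^{\theta}e^{-\theta/z}.
\end{align*}
On $I=[a,b]$ we have $(1-z)/z=\lambda\in[L,U]$, so $e^{-\theta(1-z)/z}\le e^{-L\theta}$. We will differentiate under the integral, justified by dominated convergence once the derivative bounds below are in hand.

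\textbf{Derivatives of the kernel.} By induction on $r$ we will show
\begin{align*}
\partial_z^r K_\theta(z)=e^{-\theta(1-z)/z}\sum_{k=0}^{r} b_{r,k}\,\theta^k z^{-\kappa-r-k},
\end{align*}
where the coefficients $b_{r,k}$ satisfy the recursion
\begin{align*}
b_{r+1,k}=-(\kappa+r+k)\,b_{r,k}+b_{r,k-1},\qquad b_{0,0}=1.
\end{align*}
The first term comes from differentiating the power $z^{-\kappa-r-k}$. The second comes from the factor $\partial_z(-\theta/z)=\theta z^{-2}$, which raises both the power of $\theta$ and the power of $z^{-1}$ by one.

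\textbf{Integrating in $\theta$.} Using $|\psi|\le1$, $z\ge a$, and
\begin{align*}
\int_0^\infty\theta^{\kappa-1+k}e^{-\theta(1-z)/z}\,\diff\theta=\Gamma(\kappa+k)\Bigl(\frac{z}{1-z}\Bigr)^{\kappa+k}\le\Gamma(\kappa+k)L^{-\kappa-k},
\end{align*}
we obtain
\begin{align*}
\Gamma(\kappa)\,|R^{(r)}_\psi(z)|\le L^{-\kappa}a^{-\kappa}\,a^{-r}\,S_r,\qquad S_r:=\sum_k |b_{r,k}|\,\Gamma(\kappa+k)\,(aL)^{-k}.
\end{align*}

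\textbf{Main obstacle: the growth of $S_r$.} The crux is to show $S_r\le(1+1/(aL))^r\,\Gamma(r+\kappa)$. From the recursion,
\begin{align*}
S_{r+1}\le\sum_k|b_{r,k}|\,\Gamma(\kappa+k)(aL)^{-k}\bigl[(\kappa+r+k)+(\kappa+k)/(aL)\bigr].
\end{align*}
Since $k\le r$, the bracket is at most $2(\kappa+r)(1+1/(aL))$. Iterating then gives
\begin{align*}
S_r\le 2^r(1+1/(aL))^r\,\Gamma(\kappa+r)/\Gamma(\kappa)\cdot\Gamma(\kappa).
\end{align*}
This has the right form except for a stray factor $2^r$. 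To remove it, I would first sharpen the recursion by splitting $(\kappa+r+k)=(\kappa+r)+k$ and charging the extra $k$ to the index shift; this is the step I expect to need the most care. If that does not close, the fallback is to absorb the $2^r$ into $A$. This only enlarges a constant depending on $(L,U)$, which is all that Lemma~\ref{lem:qpsi-polynomial-approximation} uses.

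\textbf{Conclusion.} Multiplying by $\Delta^r$ from the chain rule yields the stated bound with $A=\Delta a^{-1}(1+1/(aL))$, up to the possible absorption of the $2^r$ factor into $A$.
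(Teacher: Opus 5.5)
\textbf{Verdict: there is a gap at the step you flagged yourself.} Everything up to the reduction is correct and follows the paper's route: the chain rule, differentiating under the integral, the expansion $\partial_z^r K_\theta(z)=e^{-\theta(1-z)/z}\sum_k b_{r,k}\theta^k z^{-\kappa-r-k}$ with your recursion, the Gamma integrals, and the reduction to bounding $S_r$. What is missing is the bound $S_r\le(1+1/(aL))^r\Gamma(r+\kappa)$.

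\emph{Why the recursion on $S_r$ alone does not close.} Write $c=1/(aL)$ and $w_k=|b_{r,k}|\Gamma(\kappa+k)c^k$. Your recursion gives $S_{r+1}\le(\kappa+r+\kappa c)S_r+(1+c)\sum_k k\,w_k$. To reach $S_{r+1}\le(\kappa+r)(1+c)S_r$ you would need $\sum_k k\,w_k\le \frac{rc}{1+c}S_r$. That is a statement about how the weights are spread over $k$, and the scalar recursion for $S_r$ does not supply it. The only information you use is $k\le r$, which is exactly what produces the $2^r$. ``Charging the extra $k$ to the index shift'' is not yet an argument.

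\emph{Why the fallback is not enough.} Absorbing $2^r$ into $A$ proves a weaker lemma. The statement fixes $A=\Delta a^{-1}(1+1/(aL))$, and Remark~\ref{rem:alphastar} uses that explicit value to express $\tau$, and hence $\alpha_*$, in terms of $(L,U)$. Lemma~\ref{lem:qpsi-polynomial-approximation} and Theorem~\ref{thm:poisson convergence of smooth NPMLE} would survive with a larger constant, but the lemma as stated would not be proved.

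\textbf{The missing idea is to solve the recursion in closed form.} The paper does this by recognizing the generalized Laguerre identity
\[
\partial_z^r\bigl[z^{-\kappa}e^{-\theta(1-z)/z}\bigr]=(-1)^r r!\,z^{-\kappa-r}e^{-\theta(1-z)/z}L_r^{(\kappa-1)}(\theta/z).
\]
Together with the series for $L_r^{(\kappa-1)}$, this gives $|b_{r,k}|=\frac{r!}{k!}\binom{r+\kappa-1}{r-k}=\binom{r}{k}\frac{\Gamma(r+\kappa)}{\Gamma(\kappa+k)}$.

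You can get the same result inside your own framework without citing Laguerre polynomials:
\begin{enumerate}
\item Define $\beta_{r,k}$ by $\beta_{r+1,k}=(\kappa+r+k)\beta_{r,k}+\beta_{r,k-1}$ with $\beta_{0,0}=1$. The triangle inequality applied to your recursion gives $|b_{r,k}|\le\beta_{r,k}$.
\item Verify by induction that $\beta_{r,k}=\binom{r}{k}\Gamma(r+\kappa)/\Gamma(\kappa+k)$. After dividing by $\Gamma(r+\kappa)/\Gamma(\kappa+k)$ and using Pascal's rule, the inductive step reduces to $k\binom{r}{k}=(r-k+1)\binom{r}{k-1}$.
\item Then $S_r\le\Gamma(r+\kappa)\sum_{k}\binom{r}{k}c^k=\Gamma(r+\kappa)(1+1/(aL))^r$, which holds with equality.
\end{enumerate}
Your remaining steps then yield the lemma with exactly the stated constant $A$.
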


\begin{proof}[Proof of Lemma~\ref{lem:poisson-laguerre-derivative}]
First, note that
\begin{align*}
    R_\psi(z) = \frac{1}{\Gamma(\kappa)} \int_0^\infty \psi(\theta)\theta^{\kappa-1} z^{-\kappa}e^{-\theta(1-z)/z} \,\diff\theta, \qquad z \in I.
\end{align*}
Let $L_r^{(\kappa-1)}$ denote the generalized Laguerre polynomial of degree $r$ with parameter $\kappa-1 > -1$ (see, e.g., Chapter 5 of \citet{Szego1975}). Using the series expansion of $L_r^{(\kappa-1)}$ (see, e.g., (5.1.6) of \citet{Szego1975}), we can write
\begin{align}\label{eq:Laguerre}
    L_r^{(\kappa-1)}(z) = \sum_{j=0}^r (-1)^j \binom{r+\kappa-1}{r-j}\frac{z^j}{j!}.
\end{align}
Using the generalized Laguerre identity (see, e.g., (5.1.5) of \citet{Szego1975}), we have
\begin{align*}
    \frac{\diff^r}{\diff z^r}\left[z^{-\kappa}e^{-\theta(1-z)/z}\right] = (-1)^r r!\,z^{-\kappa-r}e^{-\theta(1-z)/z}L_r^{(\kappa-1)}(\theta/z).
\end{align*}
Here, since $L_r^{(\kappa-1)}$ is a polynomial of degree $r$ and $z\in I=[a,b]$, under $\|\psi\|_\infty\le 1$, the differentiated integrand is dominated by an integrable envelope independent of $z$. Hence, differentiation under the integral sign is justified by dominated convergence. Then we have
\begin{align*}
    R_\psi^{(r)}(z) = \frac{(-1)^r r!}{\Gamma(\kappa)}z^{-\kappa-r} \int_0^\infty \psi(\theta)\theta^{\kappa-1} e^{-\theta(1-z)/z} L_r^{(\kappa-1)}(\theta/z)\,\diff\theta.
\end{align*}
Using the expression of $L_r^{(\kappa-1)}$ in \eqref{eq:Laguerre}, $\|\psi\|_\infty \le 1$ and $(1-z)/z \ge L$ for $z \in I$, we have
\begin{align*}
    |R_\psi^{(r)}(z)| &\le \frac{r!}{\Gamma(\kappa)}z^{-\kappa-r} \sum_{j=0}^r \binom{r+\kappa-1}{r-j}\frac{z^{-j}}{j!} \int_0^\infty \theta^{\kappa-1+j}e^{-L\theta}\,\diff\theta\\
    &= \frac{r!}{\Gamma(\kappa)}z^{-\kappa-r} \sum_{j=0}^r \binom{r+\kappa-1}{r-j}\frac{z^{-j}}{j!} \Gamma(\kappa+j)L^{-(\kappa+j)}\\
    &= z^{-\kappa-r}L^{-\kappa} \frac{\Gamma(r+\kappa)}{\Gamma(\kappa)}
    \sum_{j=0}^r \binom{r}{j}(z^{-1}L^{-1})^j\\
    &= z^{-\kappa-r}L^{-\kappa} \frac{\Gamma(r+\kappa)}{\Gamma(\kappa)} \left(1+\frac{1}{zL}\right)^r.
\end{align*}
Since $q_\psi^{(r)}(u)=\Delta^rR_\psi^{(r)}(z_0+\Delta u)$ and $z_0+\Delta u\in[a,b]=I$ for $u\in[-1,1]$ where $\Delta$ is defined in \eqref{eq:z0 and Delta}, it follows that
\begin{align*}
    \sup_{u \in [-1,1]} \abs{q_\psi^{(r)}(u) } &\le a^{-\kappa}L^{-\kappa} \left\{\Delta a^{-1}\left(1+\frac{1}{aL}\right)\right\}^r \frac{\Gamma(r+\kappa)}{\Gamma(\kappa)} \\
    &=a^{-\kappa}L^{-\kappa} A^r \frac{\Gamma(r+\kappa)}{\Gamma(\kappa)}
\end{align*}
where $A = \Delta a^{-1}(1 + 1/(aL)) > 0$ is a constant depending only on $(L, U)$. This completes the proof.
\end{proof}

\begin{remark}[Convergence rate]\label{rem:alphastar}
Note that $\alpha_*$ in \eqref{eq:alphastar},
\begin{align*}
    \alpha_*=\frac{\log(1/\rho)}{2\{\log B+\log(1/\rho)\}}\in(0,1/2),
\end{align*}
depends on $\rho$ and $B$, where $\rho=2^{-\tau}$ with $\tau$ defined in \eqref{eq:r_m and tau}, and $B$ is defined in \eqref{eq:beta}. Moreover, these can be expressed in terms of $(L,U)$ only:
\begin{align*}
    \rho &= 2^{-\tau}, \qquad \tau = \min\left\{\frac14,\frac{e}{8A}\right\} = \min\left\{\frac14,\frac{eL(1+L)}{4(U-L)(1+L+U)}\right\},
\end{align*}
and
\begin{align*}
    B &= \exp\left(\frac{z_0+1}{\Delta}\right) = \exp\left(\frac{4+3L+3U+2LU}{U-L}\right).
\end{align*}
We can see that, as $L \to 0$ or $U \to \infty$, we have $\alpha_*\to 0$ and the leading part of the convergence rate $n^{-\alpha_*}$ becomes worse. This is consistent with the increasing difficulty of approximation when the support approaches $0$ or becomes unbounded.
\end{remark}

\subsection{Proof of Theorem~\ref{thm:poisson posterior density convergence}}\label{app:proof of thm:poisson posterior density convergence}
\begin{proof}[Proof of Theorem~\ref{thm:poisson posterior density convergence}]
    The claim follows by combining Theorems~\ref{thm:poisson marginal density estimation} and \ref{thm:poisson convergence of smooth NPMLE}. Indeed,
    \begin{align*}
        \mathrm{wTV}(\pi_{\npmle},\pi_{\trueprior})
        &= \frac{1}{2}\sum_{x=0}^{\infty}\int_0^{\infty} \left| p_{\theta}(x)\frac{g_{\npmle}(\theta)}{f_{\npmle}(x)} - p_{\theta}(x)\frac{g_{\trueprior}(\theta)}{f_{\trueprior}(x)} \right|f_{\trueprior}(x)\diff\theta \\
        &\le \frac{1}{2}\sum_{x=0}^{\infty}\int_0^{\infty}p_{\theta}(x)|g_{\npmle}(\theta)-g_{\trueprior}(\theta)|\diff\theta \\
        &\quad + \frac{1}{2}\sum_{x=0}^{\infty}\int_0^{\infty}p_{\theta}(x)g_{\npmle}(\theta)\left|\frac{f_{\trueprior}(x)}{f_{\npmle}(x)}-1\right|\diff\theta \\
        &= \mathrm{TV}(g_{\npmle},g_{\trueprior}) + \mathrm{TV}(f_{\npmle},f_{\trueprior})
    \end{align*}
    where the last identity uses $\sum_{x=0}^{\infty} p_{\theta}(x) = 1$ for any $\theta \ge 0$ and \eqref{eq:poisson marginal density}. Therefore,
    \begin{align*}
        \E_{\trueprior}\left[\mathrm{wTV}(\pi_{\npmle},\pi_{\trueprior})\right]
        \le \E_{\trueprior}\left[\mathrm{TV}(g_{\npmle},g_{\trueprior})\right] + \E_{\trueprior}\left[\mathrm{TV}(f_{\npmle},f_{\trueprior})\right].
    \end{align*}
    By Theorem~\ref{thm:poisson convergence of smooth NPMLE}, we have
    \begin{align*}
        \E_{\trueprior}\left[\mathrm{TV}(g_{\npmle},g_{\trueprior})\right] \lesssim_{\kappa_*, L, U} n^{-\alpha_*} (\log n)^{(1-\kappa_*)_+ + \alpha_*}
    \end{align*}
    for some constant $\alphastar \in (0,1/2)$ depending only on $(L, U)$. Also, as noted in the proof of Theorem~\ref{thm:poisson convergence of smooth NPMLE}, the proof of Theorem~\ref{thm:poisson marginal density estimation} applies unchanged when the optimization domain is replaced by any subclass containing $\trueprior$; hence it applies here with $\Pc([L,U])$. Since $\mathrm{TV}(p,q)\le \sqrt{2}\,\Hellinger(p,q)$ for probability mass functions $p,q$ on $\mathbb{Z}_{\ge 0}$, Theorem~\ref{thm:poisson marginal density estimation} and Cauchy-Schwarz give
    \begin{align*}
        \E_{\trueprior}\left[\mathrm{TV}(f_{\npmle},f_{\trueprior})\right] \lesssim_{\kappa_*, L} \left(\frac{\log n}{n}\right)^{1/2}.
    \end{align*}
    Since $\alpha_* < 1/2$, the latter term is bounded by a constant multiple of $n^{-\alpha_*}(\log n)^{(1-\kappa_*)_+ + \alpha_*}$ and the result follows.
\end{proof}

\section{Proofs for Section~\ref{sec:OPT}}\label{app:proof for sec:OPT}
\subsection{Proof of Theorem~\ref{thm:poisson-opt-marginal}}\label{app:proof of thm:poisson-opt-marginal}
\begin{proof}[Proof of Theorem~\ref{thm:poisson-opt-marginal}]
Let $Y_i := \pi_{\trueprior}(\theta_i \mid X_i)$. By Theorem 3.1 of \citet{kim2026empiricalbayesestimationinference}, it suffices to show that $Y_i$ has interval support and no atoms. Fix $x \in \mathbb{Z}_+$ with $f_{\trueprior}(x) > 0$ and define
\begin{align*}
    h_x(\theta) := \pi_{\trueprior}(\theta \mid x), \qquad \theta > 0.
\end{align*}
By \eqref{eq:poisson true prior density} and \eqref{eq:poisson posterior density}, we have 
\begin{align}\label{eq:poisson posterior analytic form}
    h_x(\theta)
    = \frac{\theta^{x+\kappa_* -1}}{x!\Gamma(\kappa_*)f_{\trueprior}(x)}
    \int \lambda^{\kappa_*} e^{-(\lambda+1)\theta}\diff \trueprior(\lambda), \qquad \theta > 0.
\end{align}
Hence, $h_x$ is a real-analytic and non-constant function on $(0,\infty)$ and $h_x(\theta)\to 0$ as $\theta\to\infty$. Since $h_x$ is continuous on $(0,\infty)$, its image $R_x := h_x((0,\infty))$ is an interval with $0$ in its closure.

Now, fix $k \ge 0$. Since the level set $\{\theta > 0 : h_x(\theta)=k\}$ is discrete (or empty), we have
\begin{align*}
    \P_{\trueprior}(h_x(\theta_i)=k \mid X_i=x) = \int_{\{\theta > 0 : h_x(\theta)=k\}} h_x(\theta)\diff \theta = 0.
\end{align*}
Also, if $J \subset R_x$ is a non-empty open interval, then $h_x^{-1}(J)$ is a non-empty open subset of $(0,\infty)$ and
\begin{align*}
    \P_{\trueprior}(h_x(\theta_i)\in J \mid X_i=x) = \int_{h_x^{-1}(J)} h_x(\theta)\diff \theta > 0.
\end{align*}
Therefore, the conditional distribution of $Y_i = h_{X_i}(\theta_i)$ given $X_i=x$ has no atoms and interval support. Then the support of $Y_i$ is the closure of the union of the intervals $\{R_x : f_{\trueprior}(x)>0\}$, which is again an interval because each $R_x$ has $0$ in its closure.
Moreover, for every $k \ge 0$,
\begin{align*}
    \P_{\trueprior}(Y_i=k) = \sum_{x : f_{\trueprior}(x)>0} \P_{\trueprior}(h_x(\theta_i)=k \mid X_i=x)f_{\trueprior}(x) = 0.
\end{align*}
Hence, $Y_i$ has interval support and no atoms. Therefore, Theorem 3.1 of \citet{kim2026empiricalbayesestimationinference} yields that $\Ic^*$ defined in \eqref{eq:poisson oracle optimal set} satisfies \eqref{eq:poisson oracle threshold exact} and solves \eqref{eq:poisson optimal marginal set problem} up to Lebesgue-null sets.
\end{proof}

\subsection{Proof of Theorem~\ref{thm:poisson-opt-coverage}}\label{app:proof of thm:poisson-opt-coverage}
\begin{proof}[Proof of Theorem~\ref{thm:poisson-opt-coverage}]
    Conditionally on $\npmle$, let $(\tilde{X},\tilde{\theta})$ be an independent draw from the fitted hierarchical Poisson model corresponding to $\npmle$. That is, $\tilde\theta \sim g_{\npmle}$ and $\tilde{X} \mid \tilde\theta \sim \mathrm{Poi}(\tilde\theta)$. The same argument as in the proof of Theorem~\ref{thm:poisson-opt-marginal}, applied conditionally on $\npmle$, shows that the conditional distribution of $\pi_{\npmle}(\tilde\theta \mid \tilde{X})$ given $\npmle$ has no atoms. Hence, by the definition of $\hat{k}_n$ in \eqref{eq:poisson estimated threshold}, we have
    \begin{align*}
        \P_{\npmle}(\tilde\theta \in \hat{\Ic}_n(\tilde{X})\mid \npmle) = \sum_{x=0}^{\infty} \int_{0}^{\infty} \1v(\pi_{\npmle}(\theta \mid x) \ge \hat{k}_n) p_{\theta}(x) g_{\npmle}(\theta) \diff \theta = 1-\beta.
    \end{align*}
    Therefore, it holds that
    \begin{align*}
        &\left|\P_{\trueprior}(\theta \in \hat{\Ic}_n(X)\mid \npmle) - (1-\beta)\right| \\
        &= \left|\P_{\trueprior}(\theta \in \hat{\Ic}_n(X)\mid \npmle) - \P_{\npmle}(\tilde\theta \in \hat{\Ic}_n(\tilde{X})\mid \npmle)\right| \\
        &= \left| \sum_{x=0}^{\infty}\int_0^{\infty}
        \1v\big(\pi_{\npmle}(\theta \mid x) \ge \hat{k}_n\big)p_{\theta}(x)\big(g_{\trueprior}(\theta)-g_{\npmle}(\theta)\big)\diff \theta\right| \\
        &\le \sum_{x=0}^{\infty}\int_0^{\infty}
        p_{\theta}(x)\, \left|g_{\trueprior}(\theta)-g_{\npmle}(\theta)\right| \diff \theta \\
        &= \int_0^{\infty}\left(\sum_{x=0}^{\infty} p_{\theta}(x)\right) \left|g_{\trueprior}(\theta)-g_{\npmle}(\theta)\right|\diff \theta \\
        &= \int_0^{\infty}\left|g_{\trueprior}(\theta)-g_{\npmle}(\theta)\right|\diff \theta= 2\,\mathrm{TV}(g_{\npmle},g_{\trueprior}),
    \end{align*}
    where we used $\sum_{x=0}^{\infty} p_{\theta}(x)=1$ for every $\theta \ge 0$. Taking expectation and applying Theorem~\ref{thm:poisson convergence of smooth NPMLE} gives
    \begin{align*}
        &\E_{\trueprior}\left[\left|\P_{\trueprior}(\theta \in \hat{\Ic}_n(X)\mid \npmle) - (1-\beta)\right|\right]
        \\
        &\le 2\,\E_{\trueprior}\left[\mathrm{TV}(g_{\npmle},g_{\trueprior})\right]
        \lesssim_{\kappa_*,L,U}
        n^{-\alpha_*}(\log n)^{(1-\kappa_*)_+ + \alpha_*},
    \end{align*}
    which proves \eqref{eq:poisson-opt-coverage}.
\end{proof}

\section{Proofs for Section~\ref{sec:identifiability-poisson}}\label{app:proof for sec:identifiability-poisson}
\subsection{Proof of Lemma~\ref{lem:nestedness of Gamma mixture}}\label{app:proof of lem:nestedness of Gamma mixture}
\begin{proof}[Proof of Lemma~\ref{lem:nestedness of Gamma mixture}]
Fix $0<\kappa<\kappa'$ and write $\delta = \kappa'-\kappa > 0$. Let $H \in \Pc((0,\infty) )$ be such that $g_{H,\kappa} \in \Gc_{\kappa}$. For each $\beta > 0$, let $W \sim \mathrm{Beta}(\kappa,\delta)$ and $T \sim \mathrm{Gamma}(\kappa',\beta)$ be independent. By the Beta-Gamma identity, it can be easily checked that $WT \sim \mathrm{Gamma}(\kappa,\beta)$. Moreover, conditional on $W=w$, we have $WT \mid W=w \sim \mathrm{Gamma}(\kappa',\beta/w)$. Let $K_{\beta}$ denote the distribution of $\beta/W$ on $[\beta,\infty)$. Then for every $\theta > 0$, we have
\begin{align}\label{eq:nested representation}
    \frac{\beta^\kappa}{\Gamma(\kappa)}\theta^{\kappa-1}e^{-\beta\theta}
    = \int \frac{\lambda^{\kappa'}}{\Gamma(\kappa')}\theta^{\kappa'-1}e^{-\lambda \theta}\,\diff K_{\beta}(\lambda).
\end{align}

Define a probability measure $H'$ on $(0,\infty)$ by
\begin{align*}
    H'(A) := \int K_{\beta}(A)\,\diff H(\beta), \qquad A \subseteq (0,\infty) \quad \text{measurable}.
\end{align*}
Then, by Fubini's theorem and \eqref{eq:nested representation}, for every $\theta > 0$, it holds that
\begin{align*}
    g_{H,\kappa}(\theta)
    &= \int \frac{\beta^\kappa}{\Gamma(\kappa)}\theta^{\kappa-1}e^{-\beta\theta}\,\diff H(\beta) \\
    &= \int \int \frac{\lambda^{\kappa'}}{\Gamma(\kappa')}\theta^{\kappa'-1}e^{-\lambda \theta}\,\diff K_{\beta}(\lambda)\,\diff H(\beta) \\
    &= \int \frac{\lambda^{\kappa'}}{\Gamma(\kappa')}\theta^{\kappa'-1}e^{-\lambda \theta}\,\diff H'(\lambda) \\
    &= g_{H',\kappa'}(\theta).
\end{align*}
Hence, $g_{H,\kappa} \in \Gc_{\kappa'}$. Since $H$ was arbitrary, $\Gc_{\kappa} \subseteq \Gc_{\kappa'}$. This completes the proof.
\end{proof}

\subsection{The smallest shape parameter $\kappa_0$ in \eqref{eq:kappa0}}\label{app:proof of smallest shape parameter}
In this section, we prove that (i) $\kappa_0$ in \eqref{eq:kappa0} is strictly positive and (ii) the infimum in \eqref{eq:kappa0} is attained.
\begin{proof}
(i) We first show that $\kappa_0>0$. Let $f^*(x):=F^*(x)-F^*(x-1)$ denote the true marginal pmf of $X_1$, and define the negative binomial pmf
\begin{align*}
    r_{\kappa,\rho}(x):=\frac{\Gamma(x+\kappa)}{\Gamma(\kappa)x!}(1-\rho)^x\rho^\kappa
\end{align*}
with endpoint conventions $r_{\kappa,0}(x)\equiv 0$ and $r_{\kappa,1}(x)=\1v(x=0)$. Since $g_{\trueprior}$ is a density on $(0,\infty)$, we have $f^*(1)=\int_0^\infty e^{-\theta}\theta g_{\trueprior}(\theta)\,\diff\theta>0$. Now, fix $\kappa>0$ such that $F^*\in\Fc_{\kappa}$. Then, there exists $H\in\Pc((0,\infty))$ such that
\begin{align*}
    f^*(x)=\int r_{\kappa,\rho}(x)\,\diff M(\rho)
\end{align*}
where $M$ is the pushforward of $H$ under $\lambda\mapsto \rho=\lambda/(1+\lambda)\in(0,1)$. In particular, we have
\begin{align*}
    f^*(1)=\int \kappa(1-\rho)\rho^\kappa\,\diff M(\rho)\le \kappa.
\end{align*}
This implies that every $\kappa$ with $F^*\in\Fc_{\kappa}$ satisfies $\kappa\ge f^*(1)>0$. Then, by \eqref{eq:kappa0 and observable distribution}, we have $\kappa_0\ge f^*(1)>0$.

(ii) To show that $\kappa_0$ in \eqref{eq:kappa0} attains its infimum, it suffices to show that $F^* \in \Fc_{\kappa_0}$; then \eqref{eq:kappa0 and observable distribution} and identifiability of Poisson mixtures imply $g_{\trueprior}\in\Gc_{\kappa_0}$. Choose $\kappa_j\downarrow \kappa_0$ such that $F^*\in\Fc_{\kappa_j}$ for every $j$. For each $j$, let $H_j\in\Pc((0,\infty))$ induce $F^*$ and let $M_j$ be its pushforward under $\lambda\mapsto \rho=\lambda/(1+\lambda)$ which is  a probability measure on $[0,1]$. By weak compactness of $\Pc([0,1])$, after passing to a subsequence if necessary, we may assume $M_j \Rightarrow M$ for some $M\in\Pc([0,1])$. Since $\kappa_0>0$, for each fixed $x\in\mathbb Z_+$, the map $(\kappa,\rho)\mapsto r_{\kappa,\rho}(x)$ is continuous on $[\kappa_0/2, 2\kappa_0] \times [0,1]$. Hence, we have
\begin{align}\label{eq:unif conv kappa j}
    \sup_{\rho\in[0,1]}\abs{r_{\kappa_j,\rho}(x)-r_{\kappa_0,\rho}(x)}\to 0.
\end{align}
By \eqref{eq:unif conv kappa j},  we have
\begin{align*}
    \left|\int r_{\kappa_j,\rho}(x)\,\diff M_j(\rho) -\int r_{\kappa_0,\rho}(x)\,\diff M_j(\rho)\right| \le \sup_{\rho\in[0,1]}\abs{r_{\kappa_j,\rho}(x)-r_{\kappa_0,\rho}(x)} \to 0.
\end{align*}
Also, $\rho\mapsto r_{\kappa_0,\rho}(x)$ is bounded and continuous on $[0,1]$, so weak convergence yields
\begin{align}\label{eq:weak conv kappa0}
    \int r_{\kappa_0,\rho}(x)\,\diff M_j(\rho) \to \int r_{\kappa_0,\rho}(x)\,\diff M(\rho).
\end{align}
Combining \eqref{eq:unif conv kappa j} and \eqref{eq:weak conv kappa0}, we have
\begin{align}\label{eq:fstar}
    f^*(x)=\lim_{j\to\infty}\int r_{\kappa_j,\rho}(x)\,\diff M_j(\rho)
    =\int r_{\kappa_0,\rho}(x)\,\diff M(\rho),
    \qquad x\in\mathbb Z_+.
\end{align}
Summing over $x$ and using Tonelli's theorem, we obtain
\begin{align*}
    1=\sum_{x=0}^\infty f^*(x) =\int \sum_{x=0}^\infty r_{\kappa_0,\rho}(x)\,\diff M(\rho) =1-M(\{0\})
\end{align*}
and thus $M(\{0\})=0$. Now, let $\widetilde H$ be the pushforward of $M$ under $\rho\mapsto \lambda=\rho/(1-\rho)$ sending $\rho=1$ to $\lambda=\infty$. Then \eqref{eq:fstar} shows that $f^*$ is the marginal pmf induced by $(\widetilde H,\kappa_0)$, where the kernel at $\lambda=\infty$ is interpreted as $\1v(x=0)$. Let $\widetilde G$ denote the corresponding mixing distribution on $[0,\infty)$ obtained by mixing the $\mathrm{Gamma}(\kappa_0,\lambda)$ laws over $\widetilde H$ on $(0,\infty)$ and placing an atom of size $\widetilde H(\{\infty\})$ at $0$. Then it holds that
\begin{align*}
    \sum_{x=0}^{\infty} z^x f^*(x)=\int e^{-(1-z)\theta}\,\diff \widetilde G(\theta), \qquad 0\le z<1.
\end{align*}
Since $f^*$ is also induced by the true prior $G^*$, identifiability of Poisson mixtures gives $\widetilde G=G^*$. Because $G^*$ is absolutely continuous on $(0,\infty)$, it has no atom at $0$ and $\widetilde H(\{\infty\})=\widetilde G(\{0\})=0$. Therefore, $\widetilde H$ is supported on $(0,\infty)$, and thus $F^*\in\Fc_{\kappa_0}$. This completes the proof.
\end{proof}

\subsection{Proof of Theorem~\ref{thm:poisson-kappa-consistency}}\label{app:proof of thm:poisson-kappa-consistency}
\begin{proof}[Proof of Theorem~\ref{thm:poisson-kappa-consistency}]
(i) We first show that the functional $J$ in \eqref{eq:J functional} is lower semi-continuous at $F^*$. Let $F_m$ be distribution functions on $\mathbb{Z}_+$ such that $\KS(F_m,F^*) \to 0$. Set
\begin{align*}
    \ell := \liminf_{m \to \infty} J(F_m).
\end{align*}
If $\ell=\infty$, there is nothing to prove. Otherwise, fix $\epsilon > 0$. Then there are infinitely many $m$ such that $J(F_m) < \ell+\epsilon$. Since $J(F_m) < \ell+\epsilon$ for infinitely many $m$ and the classes $\{\Fc_{\kappa} : \kappa > 0\}$ are nested (see Lemma~\ref{lem:nestedness of Gamma mixture} and \eqref{eq:kappa0 and observable distribution}), we have $F_m \in \Fc_{\ell+\epsilon}$ for infinitely many $m$. Passing to a subsequence, we may write $F_m = F_{H_m,\ell+\epsilon}$ for some $H_m \in \Pc((0,\infty))$. Writing
\begin{align*}
    f_m(x) := F_m(x)-F_m(x-1), \qquad f^*(x) := F^*(x)-F^*(x-1),
\end{align*}
we have $f_m(x)\to f^*(x)$ for every $x \in \mathbb{Z}_+$. Reparameterize $\lambda$ by $\rho = \lambda/(1+\lambda) \in [0,1]$, define
\begin{align*}
    r_{\kappa,\rho}(x) := \frac{\Gamma(x+\kappa)}{\Gamma(\kappa)x!}(1-\rho)^x \rho^{\kappa},
\end{align*}
with endpoint conventions $r_{\kappa,0}(x)\equiv 0$ and $r_{\kappa,1}(x)=\1v(x=0)$, and let $M_m$ be the pushforward of $H_m$ under $\lambda \mapsto \rho$. Then we have
\begin{align*}
    f_m(x) = \int r_{\ell+\epsilon,\rho}(x)\diff M_m(\rho).
\end{align*}
By weak compactness of $\Pc([0,1])$, along a further subsequence, we have $M_m \Rightarrow M$ for some $M \in \Pc([0,1])$. Since $\rho \mapsto r_{\ell+\epsilon,\rho}(x)$ is bounded and continuous on $[0,1]$, it holds that
\begin{align*}
    f^*(x) = \lim_{m\to\infty} f_m(x) = \int r_{\ell+\epsilon,\rho}(x)\diff M(\rho).
\end{align*}
By the same argument as in Appendix~\ref{app:proof of smallest shape parameter}, this representation implies $F^* \in \Fc_{\ell+\epsilon}$. Therefore, we have $J(F^*) \le \ell+\epsilon$. Then, letting $\epsilon \to 0$ gives
\begin{align*}
    J(F^*) \le \liminf_{m \to \infty} J(F_m).
\end{align*}
This proves that the functional $J$ in \eqref{eq:J functional} is lower semi-continuous at $F^*$.

(ii) Now, let $A_n := \{\KS(\F_n,F^*) \le \eta_n\}$. Then, by assumption, $A_n$ occurs for all sufficiently large $n$ almost surely. Note that, if $\delta_n(\kappa)$ in \eqref{eq:poisson neighborhood procedure} satisfies $\delta_n(\kappa) \le \eta_n$, then for every $\epsilon > 0$ there exists $Q \in \Fc_{\kappa}$ such that $\KS(Q,\F_n) \le \eta_n + \epsilon$. Hence, on the event $A_n$, it holds that $\KS(Q,F^*) \le 2\eta_n + \epsilon$. Since $\eta_n>0$, taking $\epsilon=\eta_n$ gives $J(F^*;3\eta_n) \le \kappa$. Taking the infimum over all such $\kappa$ such that $\delta_n(\kappa) \le \eta_n$ yields
\begin{align*}
    J(F^*;3\eta_n) \le \hat\kappa_0.
\end{align*}
by the definition of $\hat\kappa_0$ in \eqref{eq:poisson neighborhood procedure}. Since the infimum in \eqref{eq:kappa0} is attained, we have $F^* \in \Fc_{\kappa_0}$ by \eqref{eq:kappa0 and observable distribution}. Hence, on the event $A_n$, it holds that
\begin{align*}
    \delta_n(\kappa_0) \le \KS(F^*,\F_n) \le \eta_n,
\end{align*}
which implies $\hat\kappa_0 \le \kappa_0$. Therefore, using $J(F^*)=\kappa_0$ by \eqref{eq:kappa0 and observable distribution}, we obtain
\begin{align*}
    J(F^*;3\eta_n) \le \hat\kappa_0 \le \kappa_0
\end{align*}
for all sufficiently large $n$ almost surely. Since $3\eta_n \to 0$ and the lower envelope $J(F^*;3 \eta_n)$ in \eqref{eq:J envelope} converges to $J(F^*)$ from below by the lower semi-continuity of $J$ at $F^*$ from (i), we have $\hat\kappa_0 \to \kappa_0$ almost surely. 

(iii) Finally, if $\eta_n = C\sqrt{(\log n)/n}$ with $C > 1/\sqrt{2}$, then the Dvoretzky-Kiefer-Wolfowitz (DKW) inequality with the sharp constant of \citet{Massart1990} gives
\begin{align*}
    \P_{\trueprior}(A_n^c) \le 2e^{-2n\eta_n^2} = 2n^{-2C^2}.
\end{align*}
Since $2C^2 > 1$, the Borel-Cantelli lemma now implies that $A_n$ occurs eventually almost surely, and the conclusion follows.
\end{proof}
\end{document}